\newtheorem{thm}{Theorem}
\newtheorem{remark}{Remark}
\newtheorem{defn}{Definition}
\numberwithin{equation}{section}
\numberwithin{defn}{section}
\numberwithin{thm}{section}
\numberwithin{lem}{section}
\numberwithin{remark}{section}
\newcommand{\mcal}{\mathcal}
\newcommand{\R}{\mathbb{R}}
\newcommand{\F}{\ensuremath{\mathcal{F}}}
\newcommand{\U}{\ensuremath{\mathcal{U}}}
\newcommand{\V}{\ensuremath{\mathcal{V}}}
\newcommand{\A}{\ensuremath{\mathcal{A}}}
\newcommand{\B}{\ensuremath{\mathcal{B}}}
\newcommand{\T}{\ensuremath{\mathcal{T}}}
\newcommand{\ra}{\ensuremath{\longrightarrow}}
\newcommand{\st}{\ensuremath{s,t\in[0,T]:\,s\leq t}}
\newcommand{\ranget}{\ensuremath{t\in [0,T]}}
\newcommand{\pset}{\ensuremath{\mathbb P}}
\def \N{I\!\!N}
\def \esssup{\rm{esssup}}
\def \essinf{\rm{essinf}}
\def \AA{L_{\infty}({\cal A})}
\def \AAp{L^+_{\infty}({\cal A})}
\def \BB{L_{\infty}({\cal B})}
\def \BBp{L^+_{\infty}({\cal B})}
\def\ben{\begin{enumerate}}
\def\een{\end{enumerate}}
\def\bit{\begin{itemize}}
\def\eit{\end{itemize}}
\newtheorem{definition}{Definition}
\numberwithin{definition}{section}
\newtheorem{theorem}{Theorem}
\newtheorem{lemma}[theorem]{Lemma}
\newtheorem{corollary}[theorem]{Corollary}
\newtheorem{proposition}[theorem]{Proposition}
\numberwithin{theorem}{section}
\title{Extension theorems for linear operators on $L_\infty$ and application to price systems}
\author{Jocelyne Bion-Nadal\thanks{UMR 7641 CNRS - Ecole Polytechnique. Ecole Polytechnique, 91128 Palaiseau Cedex, France. Email: jocelyne.bion-nadal@cmap.polytechnique.fr}\: and Giulia Di Nunno\thanks{Centre of Mathematics for Applications (CMA), Department of Mathematics,
University of Oslo, P.O. Box 1053 Blindern, N-0316 Oslo Norway.
Email: giulian@math.uio.no}
\thanks{Norwegian School of Economics and Business Administration (NHH), Helleveien 30, N-5045 Bergen, Norway.}}
\date{February $25^{th}$, 2011}
\begin{document}
\maketitle

\vspace{-5mm}
\begin{abstract}
In an $L_\infty$-framework, we present a few extension theorems for linear operators. We focus the attention on \emph{majorant preserving} and \emph{sandwich preserving} types of extensions.
These results are then applied to the study of price systems derived by a reasonable restriction of the class of equivalent martingale measures applicable.
First we consider equivalent martingale measures with bounds on densities and the corresponding prices bounded by linear minorant and majorant. Then we consider prices bounded by bid-ask dynamics. Finally we study price systems consistent with no-good-deal pricing measures for given bounds on the Sharpe ratio. 
Within this study we introduce the definition of dynamic no-good-deal pricing measure.

\noindent{\it Key-words:} price operator, dynamic risk measure, extension theorem, representation theorem, fundamental theorem, equivalent martingale measure, bid-ask prices, good deal.\\
{\it MSC (2000):} 46E30, 91B70.\\
{\it JEL:} G12, G13.
\end{abstract}

%%%%%%%%%%%%%%%%%%
\section{Introduction}
The fundamental theorem of asset pricing is the key result celebrating the marriage between the economic principle of no-arbitrage and the mathematical tools of martingales and equivalent martingale measures. These provide the fundamental framework for pricing.
Several versions of this outstanding result have appeared with progressive improved level of generality, see e.g. \cite{DS2006}.
A crucial observation is that, provided existence, there is no uniqueness of equivalent martingale
measure guaranteed with the exception of markets that are complete, namely, in markets where all claims are attainable.
However, it is well-known that such markets are more a mathematical abstraction than proved reality and in general markets have to be considered incomplete.
As a consequence the problem of selecting one equivalent martingale measure out of the infinite many available has been largely treated.
The literature in this direction is vast and we chose not to mention any work in this direction.

More recently, starting with \cite{CSR2000} and \cite{BL2000} (see also \cite{C2003} and \cite{S2004}), a new interest developed.
Instead of selecting a single measure, one can restrict the set of equivalent martingale measures characterizing those that are in some sense ``reasonable".
The approach suggested is to rule out not only arbitrage opportunities, but also deals that are ``too good to be true".

In the same line, but with a different criterion, \cite{ADR2005} and \cite{DE2008} suggest to restric the set of equivalent martingale measures by choosing those with a density lying within pre-considered lower and upper bounds.
This criterion is motivated by the observation that some form of control on the so-called \emph{tail events}, i.e. crucial events appearing with small but positive probability, should be maintained when shifting from the physical measure (where statistical analysis is performed) to some equivalent martingale measure.

Another approach, developed in \cite{BN-dp}, consists in restricting the set of equivalent martingale measures to those compatible with bid and ask bounds observed for some traded options. This study is conducted under the more general setting of a time-consistent pricing procedure allowing for convex dynamic ask prices.

\vspace{2mm}
In the present paper we focus on linear price systems in incomplete markets that are consistent with pre-considered appropriate lower and upper bounds in connection with the various restrictions on the set of equivalent martingale measures. 
Thus we consider both the case where the density is lying within given bounds and the case where the bounds are set on the Sharpe ratio.
Moreover, we also deal with the characterization of linear price systems consistent with bid-ask bounds.

The approach we follow is independent of specific model for price dynamics.
We assume that prices $x_{st}(X)$, $0\leq s\leq t\leq T$, for marketed assets $X\in L_t$ are given and we describe them in axiomatic form. 
Here we set the bounds on prices $m_{st}(X) \leq x_{st}(X) \leq M_{st}(X)$ and we study the existence of a pricing measures $P_0$ that allows a linear representation 
\[
x_{st}(X) = E_{P_0}[X\vert \mcal{F}_s], \quad X\in L_t,
\]
fulfilling the given bounds.
The pricing measure $P_0$ will reflect the choices of bounds.

Note that the axiomatic presentation of a time-consistent price system, already present in \cite{BN} and \cite{DE2008}, is inspired by the literature in dynamic risk measures. 
See, e.g., \cite{P2004}, \cite{BEK}, and \cite{DPR}, in the context of Brownian filtrations; \cite{CDK} and \cite{KS2}, \cite{BN2008}, and \cite{BN2009}, for general filtered probability spaces, and \cite{BNK} in the case of model uncertainty.

The various applications are presented as result of a unique approach: the existence of a pricing measure allowing a linear representation of prices and fulfilling specific requirements corresponds to the possibility of extending the price operators onto the whole space in an appropriate way.
Thus we study extension theorems of linear operators.

This approach is already introduced in \cite{ADR2005}, and later developed in \cite{DE2008} to include the time-continuous case. 
However the present paper differs from these works in several ways. 
Papers  \cite{ADR2005} and  \cite{DE2008} study only applications to pricing measures with bounds on densities in the $L_p$-setting, while this contribution is framed an $L_\infty$-setting. 
We stress that a crucial difference is that $L_\infty$ spaces are not separable for the topology induced by the norm. Thus we cannot apply the same techniques as in \cite{DE2008}, but we have to rely on the theory of filters, see Appendix.
Moreover, we present a version of the sandwich extension theorem with substantially weaker assumptions. 
This turns out to be fundamental in the application to no-good-deal pricing systems.
Our study is based on a new point of view on the concept of Sharpe ratio bounds.
As a result we introduce the concept of a dynamic no-good-deal pricing measure in a model free setting. This definition generalizes the static notion of no-good-deal pricing measure to a continuous time framework.

This paper is organized as follows.
In Section 2 we present the basic definitions and the axiomatic description of price operators and price systems. 
Section 3, which is also of self-standing interest, is dedicated to the extension theorems for linear operators on $L_\infty$. Our result present conditions for the existence of extensions that are bounds preserving. 
We include the conditions for a topological version of the extension theorems.  
A first non-trivial application of the results of Section 3 is the version of the fundamental theorem of asset pricing as introduced in Section 4. This theorem characterizes the conditions for the existence of pricing measures consistent with pre-defined bounds on prices.
The theorem is presented for continuous-time trading models. 
Here the theory of filters is used.
Section 5 and 6 are dedicated to the application of this general result to the specific restrictions on prices and measures mentioned before.
First we consider the case of bounds on martingale measure densities, then the case of prices lying within the bid and ask dynamics. Finally we study the case of no-good-deal pricing measures.
This last part require the analysis and the eventual extension of the definition of bounds on the Sharpe ratio to their dynamic version.

%%%%%%%%%%%%
\section{Linear pricing rules}\label{linear pricing}

We consider a continuous time market model without friction on the time interval \([0,T],\;T>0.\) Let \((\Omega,\F,P)\) be a complete probability space equipped with the right-continuous filtration \(\mathbb F:=\{\F_t,\;0\leq t\leq T\}\)  with \(\F_T=\F\).

We work in an \(L_\infty\)-framework and consider claims as elements of the space \(L_\infty(\F_t):=L_\infty(\Omega,\F_t,P)\) with finite norm
\[
\|X\|_\infty:=\esssup \vert X\vert,\quad X\in L_\infty(\F_t).
\]
Whenever we use a superscript $+$ in the notation of a space, we refer to the corresponding cone of the non-negative elements.

For any time \(t\in[0,T],\) let
\begin{equation}
L_t\subseteq L_\infty(\F_t)
\end{equation}
denote the linear sub-space representing all {market} claims that are payable at time \(t.\)
Note that in a complete market $L_t=L_\infty(\F_t)$ for all $t\in[0,T]$. However, in general \(L_t\subsetneq L_\infty(\F_t)\) for some \(t\in [0,T]\).

A \emph{num\'eraire} $R_t$, $\ranget$, is fixed in the market.
This is an asset that is always payable, i.e. $R_t \in L_t$ for all $\ranget$, at the price $0<R_t<\infty$ $P$-a.s.
For simplicity in notation we will consider this to be $R_t \equiv 1$.
Then prices and discounted prices will coincide.
Having this in mind hereafter we will not distinguish between the two and simply refer to price operators.

\begin{defn} \label{def:po}
For any $\st$ , the operator \( x_{st}\) defined on \(L_t,\) with values in $L_\infty(\F_s)$ is a \emph{price operator} if it is
\begin{itemize}
\item
\emph{monotone}, i.e. for any $X',\,X''\,\in L_t$,
\begin{equation}
  x_{st}(X')\geq   x_{st}(X''),\quad X'\geq X'',\label{mono}
\end{equation}
\item
\emph{strictly monotone}, i.e. for any $X',\,X''\,\in L_t$, 
\begin{equation}
  x_{st}(X')>  x_{st}(X''),\quad X'> X'',\label{monostr}
\end{equation}
where the strict inequality sign is meant in the sense that in addition to the P-a.s. inequality ``$\geq$'', the strict inequality ``$>$'' is verified on a set of positive measure,
\item \emph{additive}, i.e. for any $X',\,X''\,\in L_t$, 
\begin{equation}
  x_{st}(X'+X'')=  x_{st}(X')+  x_{st}(X''), \quad X',\,X''\,\in L_t,
\label{addi}
\end{equation}
\item \(\F_s\)\emph{-homogeneous}, i.e.
\begin{equation}
  x_{st}(\lambda X)=\lambda   x_{st}(X)
\label{homo}
\end{equation}
for all \(X\in L_t\) and  \(\lambda\in L^+_\infty(\mcal{F}_s)\) such that \(\lambda X\in L_t\),
\item
and
\begin{equation}\label{normal}
x_{st}(1)=1.
\end{equation}
\end{itemize}
\end{defn}

Note that \eqref{normal} is justified by the choice of num\'eraire.
Note also that, from \eqref{addi}, we have that $  x_{st}(0)=0$.
As a consequence of \eqref{homo}-\eqref{normal}, $x_{tt}(X)=X$ for $X\in L_t$.
Moreover note that from \eqref{mono} and \eqref{normal} it appears natural that
\[
\Vert x_{st}(X)\Vert_\infty < \infty, \quad X\in L_\infty(\F_t).
\]
In fact, the following observation holds.
\begin{remark}
Any monotone linear operator $x: L_\infty(\B) \rightarrow L_\infty(\A)$ is continuous in the norm topology $\Vert \cdot\Vert_\infty$, for any $\sigma$-algebras $\A\subseteq \B$.
{\rm Indeed, this is easily seen as $- \Vert X\Vert_\infty 1 \leq X\leq \Vert X\Vert_\infty 1$, hence $\Vert x(X)\Vert_\infty \leq \Vert X\Vert_\infty \Vert x(1) \Vert_\infty$.
In this way the concept of \emph{tame} operator defined in \cite{DE2008} (see also \cite{ADR2005}) is directly embedded in the definition.}
\end{remark}

\begin{defn}
\label{d:below}
Let $s,t\in [0,T]: s\leq t$. 
The price operator $ x_{st}(X)$, is \emph{continuous from above P-a.s.} at $X \in L_t$ if for any non-increasing sequence $X_n \in L_t$ with limit $X \in L_t$ we have
\begin{equation}\label{leftcont}
x_{st}(X_n) \downarrow x_{st}(X), \quad n\to \infty \quad P-a.s.
\end{equation}
\end{defn}
Note that, for a monotone linear operator, continuity from above is equivalent to continuity from below.

\begin{defn}
The family of price operators $ x_{st}$, $0\leq s\leq t \leq T$ is \emph{right-continuous} at $s$ if, for every $X \in L_t$, 
\begin{equation}\label{rc}
x_{s't}(X) \rightarrow x_{st}(X), \quad s'\downarrow s \quad P-a.s.
\end{equation}
\end{defn}

\begin{defn} \label{pcons}
Let $\T \subseteq [0,T]$.
The family $x_{st}$, $s,t \in \T: s \leq t$, of operators $x_{st}(X)$, $X\in L_t$, is \emph{time-consistent} (in $\T$) if for all $s, u, t\in \T$: $s \leq u \leq t$
\begin{equation}\label{consist}
x_{st}(X)=x_{su}\big(x_{ut}(X)\big),
\end{equation}
for all $X\in L_t$ such that $x_{ut}(X) \in L_u$.
\end{defn}

In the sequel we will consider time-consistency \eqref{consist}. This is a natural assumption in view of standard arguments of absence of arbitrage.

\begin{defn}\label{ps}
A \emph{pricing system} is the whole time-consistent \eqref{consist}, right-continuous  \eqref{rc} family of price operators $x_{st}(X)$, $X\in L_t$, $0 \leq s \leq t \leq T$, continuous from above \eqref{leftcont}.
\end{defn}

%%%%%%%%%%%%%%%%%%%%%%%%%%%%%%%%%%
\section{Representation and extension theorems for operators on $L_\infty$}\label{sec:extension}

In this section we study extension theorems for operators which will be applied to the case of a single period market model with trading times $s,t$: $s\leq t$.
To keep the exposition general enough, we will consider simply two $\sigma$-algebras $\A\subseteq \B$.

\begin{definition}
\label{d:above}
A map $M: \BB \rightarrow  \AA$ is \emph{regular} if for every non-increasing sequence $X_n\in \BB$ with $X_n \downarrow 0$, $n\to \infty$ $P$-a.s, we have
\begin{equation}
\label{Mcont}
M(X_n) \rightarrow 0, \quad n \to \infty\quad P-a.s. 
\end{equation}
\end{definition}

%%%%%%
\subsection{Representation theorems and majorant conditions}
\label{Sec:Rep Thm}
Hereafter we deal with a representation theorem for  $\A$-homogeneous monotone linear operators continuous from above \eqref{leftcont} defined on  $\BB$ with values in $\AA$. 
The theorem relies on the following representation result for  positive linear forms on  $\BB$ continuous from above.
Even though the proof follows standard arguments, we could not find a reference for this result, hence we have chosen to present it fully.

\begin{lemma}
Let $L: \BB \rightarrow \R$ be a positive linear form continuous from above such that $L(1)=1$. Then there exists $f \in L^+_1(\B)$ , $E[f] =1$,  such that
\begin{equation}
L(X)=E\big[ fX \big],\qquad X \in \BB.
\label{eq1}
\end{equation}
\label{lemma1}
\end{lemma}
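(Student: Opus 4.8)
The plan is to construct the density $f$ as a countably additive measure obtained from the finitely additive one naturally associated with $L$, using the continuity-from-above hypothesis to upgrade finite additivity to countable additivity, and then invoking the Radon--Nikodym theorem.

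\textbf{Step 1: Build a finitely additive set function.} For $A \in \B$ set $\mu(A) := L(\mathbf{1}_A)$. Positivity of $L$ gives $\mu \geq 0$, $L(1)=1$ gives $\mu(\Omega)=1$, and linearity of $L$ gives finite additivity of $\mu$ on $\B$. I also note that $\mu$ is absolutely continuous with respect to $P$ in the strong sense: if $P(A)=0$ then $\mathbf{1}_A = 0$ in $\BB$, so $\mu(A)=L(0)=0$; consequently $\mu$ is well defined on the measure algebra.

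\textbf{Step 2: Upgrade to countable additivity via continuity from above.} If $A_n \in \B$ with $A_n \downarrow \emptyset$ $P$-a.s., then $\mathbf{1}_{A_n} \downarrow 0$ $P$-a.s., and $\mathbf{1}_{A_n}$ is a non-increasing sequence in $\BB$; continuity from above of $L$ yields $\mu(A_n) = L(\mathbf{1}_{A_n}) \to 0$. Finite additivity plus continuity at $\emptyset$ is equivalent to countable additivity, so $\mu$ extends to a genuine (countably additive) finite measure on $(\Omega,\B)$, still satisfying $\mu \ll P$ and $\mu(\Omega)=1$.

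\textbf{Step 3: Radon--Nikodym and passage from indicators to general $X$.} By the Radon--Nikodym theorem there is $f \in L^1(\B)$, $f \geq 0$ $P$-a.s., with $E[f] = \mu(\Omega) = 1$, such that $\mu(A) = E[f\,\mathbf{1}_A]$ for all $A \in \B$; thus $f \in L_1^+(\B)$. It remains to show $L(X) = E[fX]$ for all $X \in \BB$. This holds for indicators by construction, hence for $\B$-measurable simple functions by linearity of both sides. For general $X \in \BB$, approximate by simple functions: writing $X = X^+ - X^-$ and using monotone approximation of $X^{\pm}$ from below by simple functions $Y_n \uparrow X^{\pm}$, note that boundedness of $X$ makes $X^{\pm} - Y_n$ a non-increasing sequence in $\BB$ decreasing to $0$ $P$-a.s., so continuity from above of $L$ (equivalently, continuity from below) gives $L(Y_n) \to L(X^{\pm})$, while dominated convergence gives $E[fY_n] \to E[fX^{\pm}]$; combining yields \eqref{eq1}.

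\textbf{Main obstacle.} The only delicate point is Step 2, the equivalence ``finitely additive $+$ continuous at $\emptyset$ $\Rightarrow$ countably additive,'' and in particular making sure the continuity-from-above hypothesis on $L$, which is phrased for sequences in $\BB$, applies cleanly to indicator sequences modulo $P$-null sets. This is routine but must be stated carefully since $\BB = L_\infty(\B)$ consists of equivalence classes; one checks that $A_n \downarrow \emptyset$ $P$-a.s.\ is exactly what is needed and that the null-set issues are harmless because $\mu \ll P$. Everything else is a standard Daniell--Stone-type argument.
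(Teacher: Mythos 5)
Your proof is correct, and its core is the same as the paper's: turn $L$ into a set function, use continuity from above to pass from finite to countable additivity, check absolute continuity with respect to $P$ via the fact that $P(A)=0$ forces $1_A=0$ in $\BB$, and finish with Radon--Nikodym. The route differs in one respect. The paper first invokes the representation result quoted from F\"ollmer and Schied \cite{FS04} to obtain a finitely additive set function $\mu$ of bounded total variation with $L(X)=\int X\,d\mu$ for \emph{all} $X\in\BB$ at once, so that once $\mu$ is shown to be a probability measure (continuity from above applied to $1_\Omega-1_{B_n}\downarrow 0$ for $B_n\uparrow\Omega$, the complementary form of your Step 2) and $\mu\ll P$, the identity $L(X)=E[fX]$ on all of $\BB$ is immediate. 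You instead define $\mu(A):=L(1_A)$ only on indicators and must therefore recover the representation on general $X$ by hand, through simple-function approximation, the equivalence of continuity from above and from below for a positive linear form, and dominated convergence; you do this correctly, including the equivalence-class/null-set point. The trade-off: the paper's appeal to the cited theorem shortens the argument, while yours is self-contained (a Daniell-type construction needing nothing beyond Radon--Nikodym) at the modest cost of the explicit indicator-to-general-$X$ passage.
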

\begin{proof}
Denote ${\cal X}$ the space of bounded ${\cal B}$-measurable maps. Define  $\tilde L$ on ${\cal X}$ by $\tilde L (X)=L(\overline X)$, where $\overline X$ is the class of $X$ in $\BB$. From \cite[Appendix 50]{FS04}, there is a finitely additive set function $\mu$ on $(\Omega,{\cal B})$ with bounded total variation such that $L(X)=\int X d \mu$.  Let $B_n$ be an increasing sequence of events in $\Omega$ such that $\bigcup_n B_n=\Omega$. The sequence $1_{\Omega}-1_{B_n}$ is decreasing  to $0$. 
Hence $\mu(B_n)\uparrow \mu(\Omega)=1$, by application of the continuity from above and the additivity. 
Thus $\mu$ is a probability measure. Consider $B \in {\cal B}$ such that $P(B)=0$, i.e. $1_B=0$ in $\BB$. Then $\tilde L (1_B)=0$ and thus $\mu \ll P$. Hence there exists $f \in L^+_1(\B)$ such that equation (\ref{eq1}) is satisfied.
 \end{proof}

\begin{theorem}
Let $x: \BB \rightarrow \AA$ be an $\A$-homogeneous monotone linear operator continuous from above \eqref{leftcont}. Assume that there is a constant $c>0$ such that $x(1) \geq c$.
Then there is a probability measure $Q \ll P$ on $(\Omega,{\cal B})$ such that  
$$
x(X)=x(1) E_Q\big[X|{\cal A}\big] = x(1) E\Big[X \frac{f}{E[f|\mathcal{A}]}\vert \mathcal{A}\Big],
\qquad X \in \BB,
$$
and $f:= \frac {dQ}{dP} \in L^+_1(\B)$.
Moreover, there is a unique $f =\frac {dQ}{dP}$ in $L_1^+({\cal B})$ such that $E[f |\A]=1$ and $x(X)=x(1) E[fX|\A]$.
\label{thm1}
\end{theorem}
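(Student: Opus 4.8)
\emph{Plan.} The idea is to factor out the $\A$-measurable quantity $x(1)$, reduce to a scalar positive linear form handled by Lemma~\ref{lemma1}, and then use $\A$-homogeneity to upgrade the resulting scalar density to a conditional one. Since $x(1)\in\AA$ and, by hypothesis, $x(1)\geq c>0$, the function $x(1)^{-1}$ belongs to $\AAp$; this is the only place where the lower bound $c$ is needed. Put $y(X):=x(1)^{-1}x(X)$ for $X\in\BB$. Multiplication by the nonnegative $\A$-measurable function $x(1)^{-1}$ preserves linearity, monotonicity, $\A$-homogeneity and continuity from above \eqref{leftcont}, and $y(1)=1$; so it is enough to produce $f\in L^+_1(\B)$ with $E[f\,|\,\A]=1$ and $y(X)=E[fX\,|\,\A]$, and then to multiply back by $x(1)$.

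\emph{Scalar reduction.} First I would set $L:\BB\to\R$, $L(X):=E[y(X)]$. This is linear, positive (if $X\geq 0$ then $y(X)\geq y(0)=0$) and $L(1)=1$. For continuity from above it suffices to check $L(X_n)\to 0$ when $X_n\downarrow 0$: then $y(X_n)\downarrow 0$ $P$-a.s.\ with $0\leq y(X_n)\leq y(X_1)\in\BB$, so dominated convergence applies. Lemma~\ref{lemma1} then yields $f\in L^+_1(\B)$ with $E[f]=1$ and $L(X)=E[fX]$ for all $X\in\BB$.

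\emph{Conditioning and construction of $Q$.} For $\lambda\in\AAp$, $\A$-homogeneity of $y$ gives $E[\lambda\,y(X)]=E[y(\lambda X)]=L(\lambda X)=E[\lambda fX]$; splitting a general $\lambda\in\AA$ into $\lambda^+-\lambda^-$ extends this to all $\lambda\in\AA$, and since $y(X)$ and $E[fX\,|\,\A]$ are integrable and $\A$-measurable it follows that $y(X)=E[fX\,|\,\A]$ $P$-a.s.\ for every $X\in\BB$. Taking $X=1$ gives $E[f\,|\,\A]=y(1)=1$. Let $Q$ be the probability measure with $\tfrac{dQ}{dP}=f$; then $Q\ll P$ and $E_Q[X\,|\,\A]=E[fX\,|\,\A]/E[f\,|\,\A]=y(X)$, hence
\[
x(X)=x(1)\,y(X)=x(1)\,E_Q[X\,|\,\A]=x(1)\,E\Big[X\tfrac{f}{E[f\,|\,\A]}\,\Big|\,\A\Big],\qquad X\in\BB.
\]

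\emph{Uniqueness and the main difficulty.} If $g\in L^+_1(\B)$ also satisfies $E[g\,|\,\A]=1$ and $x(X)=x(1)E[gX\,|\,\A]$ for all $X\in\BB$, then dividing by the $\A$-measurable $x(1)$ gives $E[gX\,|\,\A]=y(X)=E[fX\,|\,\A]$; taking $X=1_{\{f>g\}}\in\BB$ yields $E[(f-g)1_{\{f>g\}}\,|\,\A]=0$ with nonnegative integrand, so $f\leq g$ $P$-a.s., and by symmetry $f=g$. I do not expect a real obstacle here: modulo Lemma~\ref{lemma1} the argument is bookkeeping, and the only load-bearing points are the use of $c>0$ to make $x(1)^{-1}$ a bona fide bounded $\A$-measurable multiplier (otherwise $E_Q[\,\cdot\,|\,\A]$ need not be defined where $x(1)$ vanishes) and the observation that $\A$-homogeneity is precisely what converts the single scalar density of Lemma~\ref{lemma1} into a conditional density.
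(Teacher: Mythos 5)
Your proof is correct and follows essentially the same route as the paper's: both reduce to the scalar form $L(X)=E[x(1)^{-1}x(X)]$, invoke Lemma~\ref{lemma1}, and then use $\A$-homogeneity to upgrade the scalar density to the conditional representation (the paper tests against indicators $1_A$, $A\in\A$, where you test against general $\A$-measurable multipliers, which is the same argument). As a bonus, you also spell out the uniqueness of the normalized density, which the paper's proof leaves implicit.
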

\begin{proof}
Since $x(1) \in \AA$ and $c\leq x(1)$. Then $x(1)^{-1} \in \AAp$. Denote $L(X)=E \big[ x(1)^{-1}x(X)\big]$.
From Lemma \ref{lemma1}, there is a probability measure $Q \ll P$ with $\frac {dQ}{dP} \in L^+_1(\B)$, such that $L(X)=E_Q [X]$. 
Let $A \in {\cal A}$. 
Applying the $\mathcal{A}$-homogeneity of $x$, we obtain:
\begin{eqnarray*}
E_Q[1_A x(X)]& =& E\big[ x(1)^{-1} x(1_A x(X)) \big]\nonumber \\
& =&  E\big[ x(1)^{-1}1_A x( X)x(1) \big] \nonumber\\
& =&  E\big[ x(1)^{-1} x(1_A Xx(1)) \big]\nonumber \\
& =&  E_Q[1_A Xx(1)].\nonumber 
\end{eqnarray*}
Hence, we have
$x(X)=E_Q[Xx(1)|{\cal A}]=x(1)E_Q[X|{\cal A}]$, $X \in \BB$.
\end{proof}

\vspace{2mm}
Recall that an operator $M:\BBp \rightarrow \AAp$ is \emph{sublinear} if
\begin{equation}
\label{sublinear}
M(X+Y) \leq M(X)+M(Y), \qquad  X,Y \in \BBp, 
\end{equation}
$$M(\lambda X)=\lambda M(X), \qquad X \in \BBp,\, \lambda \geq 0 .$$
We remark that sublinearity implies $M(0)=0$.

\vspace{2mm}
The following result shows that any $\A$-homogeneous monotone linear operator continuous from above admits some natural $\A$-homogeneous sublinear majorant.

\begin{corollary}
Let $x: \BB \rightarrow \AA$ be an $\A$-homogeneous monotone linear operator continuous from above \eqref{leftcont}. 
Assume that there is a constant $c>0$ such that $x(1) \geq c$. Then $x$ satisfies the majorant condition:
\begin{equation}
\label{eqmaj}
x(X) \leq M(Y), \qquad X \in \BB, Y \in \BBp:\; X \leq Y, 
\end{equation}
for some regular sublinear $\A$-homogeneous operator $M:\BBp \rightarrow \AAp$ \eqref{Mcont}.
\label{cor1}
\end{corollary}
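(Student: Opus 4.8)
The plan is to exhibit an explicit candidate for $M$ built directly from the representation of $x$ provided by Theorem~\ref{thm1}, and then check it has the required properties. Since $x$ is $\A$-homogeneous, monotone, linear, continuous from above, and $x(1)\geq c>0$, Theorem~\ref{thm1} gives a probability measure $Q\ll P$ on $(\Omega,\B)$ with $f=\tfrac{dQ}{dP}\in L_1^+(\B)$, $E[f|\A]=1$, and $x(X)=x(1)\,E[fX|\A]$ for all $X\in\BB$. The natural candidate is
\[
M(Y):=\Vert x(1)\Vert_\infty\, E[fY|\A],\qquad Y\in\BBp.
\]
(One could also take $M(Y):=x(1)E[fY|\A]$ directly; using $\Vert x(1)\Vert_\infty$ in place of $x(1)$ only makes the majorant larger and keeps $M$ manifestly $\AAp$-valued and $\A$-homogeneous, so either choice works — I will use $x(1)E[fY|\A]$ for simplicity, since $x(1)\in\AAp$.)

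First I would verify that $M(Y):=x(1)E[fY|\A]$ maps $\BBp$ into $\AAp$: for $Y\in\BBp$ we have $fY\geq0$, hence $E[fY|\A]\geq0$, and $E[fY|\A]\leq\Vert Y\Vert_\infty E[f|\A]=\Vert Y\Vert_\infty$, so $M(Y)\in\AAp$ with $\Vert M(Y)\Vert_\infty\leq\Vert Y\Vert_\infty\Vert x(1)\Vert_\infty<\infty$. Next, $M$ is in fact linear on $\BBp$ (conditional expectation is linear and $x(1)$ is a fixed $\A$-measurable factor), so in particular $M(X+Y)=M(X)+M(Y)\leq M(X)+M(Y)$ and $M(\lambda Y)=\lambda M(Y)$ for $\lambda\geq0$, giving sublinearity \eqref{sublinear}; also $M(0)=0$. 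For $\A$-homogeneity, for $\lambda\in L_\infty^+(\A)$ we have $M(\lambda Y)=x(1)E[f\lambda Y|\A]=\lambda x(1)E[fY|\A]=\lambda M(Y)$ by pulling out the $\A$-measurable factor $\lambda$. Regularity \eqref{Mcont} follows from dominated convergence for conditional expectations: if $Y_n\downarrow0$ $P$-a.s.\ in $\BBp$, then $fY_n\downarrow0$ $P$-a.s.\ with $0\leq fY_n\leq fY_1\in L_1^+(\B)$, hence $E[fY_n|\A]\to0$ $P$-a.s., and multiplying by the fixed $x(1)$ preserves this.

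Finally I would check the majorant condition \eqref{eqmaj}: if $X\in\BB$, $Y\in\BBp$ with $X\leq Y$, then $fX\leq fY$ (since $f\geq0$), so $E[fX|\A]\leq E[fY|\A]$ by monotonicity of conditional expectation, and multiplying by $x(1)\in\AAp$ gives $x(X)=x(1)E[fX|\A]\leq x(1)E[fY|\A]=M(Y)$, as required. I do not anticipate a genuine obstacle here; the only point demanding mild care is the justification of regularity — one must be sure the conditional-expectation version of dominated convergence applies, which it does because the dominating function $fY_1$ is integrable and the convergence $fY_n\to0$ is $P$-a.s.\ and monotone, so this is in fact just monotone convergence for conditional expectations. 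The result is thus essentially an immediate corollary of the representation in Theorem~\ref{thm1}.
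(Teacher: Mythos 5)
Your proof is correct and follows essentially the same route as the paper: the paper also invokes Theorem \ref{thm1} and defines $M(Y):=x(1)E_Q[Y|\mathcal{A}]$ on $L^+_\infty(\mathcal{B})$, i.e.\ the restriction of $x$ to the positive cone, and notes it is sublinear, regular, $\mathcal{A}$-homogeneous and majorizes $x$. You have simply spelled out the verifications (conditional monotone/dominated convergence for regularity, pulling out the $\mathcal{A}$-measurable factor for homogeneity) that the paper leaves implicit.
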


\begin{proof}
From Theorem \ref{thm1}, there is a probability measure $Q \ll P$ associated to 
$x$ such that  $x(X)=x(1) E_Q[X|{\cal A}]$, $X \in \BB$.
Define  $M:\BBp \rightarrow \AAp$ by 
\begin{equation}
\label{M}
M(X):=x(X)=x(1)E_Q[X|{\cal A}], \qquad X \in \BBp.
\end{equation}
The operator $M$ is sublinear and it is  regular.
The majorant condition \eqref{eqmaj} is clearly satisfied.
\end{proof}

\vspace{2mm}
The following proposition shows that whenever a linear operator satisfies a majorant condition of type \eqref{eqmaj}, then it is also monotone and weak $\A$-homogeneous as defined here below.

\begin{definition}
We say that an operator $x:\BB \rightarrow \AA$ is \emph{weak $\mathcal{A}$-homogeneous} if, for every $X \in \BB$ and every $A \in {\cal A}$, it satisfies
\begin{equation}
\label{weak homogeneity}
x(1_A X)= 1_A x(X).
\end{equation}
\end{definition}

\begin{lemma}
\label{M1}
If $M: L^+_\infty(\mcal{B}) \rightarrow L^+_\infty(\mcal{A})$ is a weak $\mcal{A}$-homogeneous, regular, monotone, sublinear operator, then it is also $\mcal{A}$-homogeneous.
\end{lemma}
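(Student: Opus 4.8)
The goal is to upgrade weak $\mcal{A}$-homogeneity (indicator functions) to full $\mcal{A}$-homogeneity, i.e. $M(\lambda X) = \lambda M(X)$ for every $\lambda \in L^+_\infty(\mcal{A})$ and $X \in L^+_\infty(\mcal{B})$. The standard strategy is a three-step approximation: first establish the identity for $\mcal{A}$-measurable \emph{simple} nonnegative functions $\lambda$, then pass to bounded nonnegative measurable $\lambda$ by monotone approximation from below, using regularity of $M$ to justify the limit. A preliminary monotonicity remark: any sublinear $M$ with $M(0)=0$ satisfies $M(X) \le M(Y)$ whenever $X \le Y$ in $L^+_\infty(\mcal{B})$ — but monotonicity is already hypothesized, so I can use it freely. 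I also note $M$ is positively homogeneous for scalars $\lambda \ge 0$ by the definition of sublinear, so the content is really about \emph{random} $\mcal{A}$-measurable multipliers.

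\emph{Step 1 (simple multipliers).} Let $\lambda = \sum_{i=1}^n a_i 1_{A_i}$ with $a_i \ge 0$ and $\{A_i\}$ a finite $\mcal{A}$-measurable partition of $\Omega$. Then for $X \in L^+_\infty(\mcal{B})$,
\[
M(\lambda X) = M\Big(\sum_i a_i 1_{A_i} X\Big).
\]
Here I would like to say $M$ is additive on the pieces $a_i 1_{A_i}X$; since these have disjoint supports, subadditivity gives $\le$, and I get the reverse inequality from weak homogeneity: multiplying $M(\lambda X)$ by $1_{A_j}$ and using \eqref{weak homogeneity} together with $1_{A_j}\lambda X = a_j 1_{A_j}X$ and scalar homogeneity yields $1_{A_j} M(\lambda X) = M(1_{A_j}\lambda X) = M(a_j 1_{A_j} X) = a_j M(1_{A_j}X) = a_j 1_{A_j} M(X) = 1_{A_j}\lambda M(X)$. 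Summing over the partition gives $M(\lambda X) = \lambda M(X)$ for simple $\lambda$. (This argument sidesteps any need for full additivity of $M$.)

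\emph{Step 2 (general bounded multipliers via regularity).} For general $\lambda \in L^+_\infty(\mcal{A})$, take simple $\mcal{A}$-measurable $\lambda_n \uparrow \lambda$ uniformly (e.g. dyadic truncations), so also $\lambda_n \le \lambda$. By monotonicity $M(\lambda_n X) \le M(\lambda X)$, and by Step 1 $M(\lambda_n X) = \lambda_n M(X) \uparrow \lambda M(X)$ pointwise, giving $\lambda M(X) \le M(\lambda X)$. For the reverse inequality, write $\lambda X = \lambda_n X + (\lambda - \lambda_n)X$; subadditivity gives $M(\lambda X) \le M(\lambda_n X) + M((\lambda-\lambda_n)X) = \lambda_n M(X) + M((\lambda - \lambda_n)X)$. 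Now $(\lambda - \lambda_n)X$ is a non-increasing sequence in $L^+_\infty(\mcal{B})$ decreasing to $0$ $P$-a.s. (since $\|\lambda-\lambda_n\|_\infty \to 0$ and $X$ is bounded), so regularity \eqref{Mcont} forces $M((\lambda-\lambda_n)X) \to 0$ $P$-a.s. Hence $M(\lambda X) \le \lambda M(X)$, and combined with Step 2's first half we get equality.

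\emph{Main obstacle.} The delicate point is the interchange of limits in Step 2 — specifically, that the sublinearity-based upper bound $M((\lambda-\lambda_n)X)$ actually vanishes. This is exactly where regularity is essential and where one must be careful that the a.s. limit is legitimate along the whole sequence (it is, because $(\lambda-\lambda_n)X$ is genuinely monotone decreasing to $0$, not merely convergent). One should also double-check in Step 1 that using weak homogeneity on the not-yet-simple object $\lambda X$ is valid: it is, since $\lambda X \in L^+_\infty(\mcal{B})$ and $1_{A_j} \in \mcal{A}$, so \eqref{weak homogeneity} applies directly. Everything else is routine bookkeeping.
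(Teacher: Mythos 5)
Your proof is correct and follows essentially the same route as the paper: approximate the $\mcal{A}$-measurable multiplier from below by simple functions, use weak homogeneity (via the partition/support argument) to handle the simple case, monotonicity for the lower bound, and sublinearity plus regularity of $M$ to kill the remainder term $M((\lambda-\lambda_n)X)$. The only difference is that you spell out the simple-multiplier step explicitly, which the paper leaves implicit under ``weak $\mcal{A}$-homogeneity.''
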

\begin{proof}
Recall that for $f\in  L^+_\infty(\mcal{A})$ there exists an increasing sequence of simple functions $f_n\uparrow f$, $n\to\infty$ in $ L^+_\infty(\mcal{A})$.
For any $X\in L^+_\infty(\mcal{B})$, sublinearity and monotonicity imply
\[
M(fX) \leq M((f-f_n)X) + M(f_nX) \leq M((f-f_n)X) + M(fX).
\]
Hence, from regularity and weak $\mcal{A}$-homogeneity, we conclude
\[
fM(X) = \lim_{n\to \infty}f_nM(X) = \lim_{n\to \infty}M(f_nX) = M(fX).
\]
with convergence in $\AA$.
\end{proof}

\begin{lemma}
\label{x1}
If $x: \BB \rightarrow \AA$ is a weak $\mcal{A}$-homogeneous monotone linear operator continuous from above, then it is $\mcal{A}$-homogeneous.
\end{lemma}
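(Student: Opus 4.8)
The plan is to bootstrap from Lemma \ref{M1} rather than argue from scratch. Consider the restriction of $x$ to the positive cone, $M := x\vert_{\BBp}$. Since $x$ is linear, $M$ is additive and positively homogeneous on $\BBp$, hence sublinear (with equality in \eqref{sublinear}); it is monotone because $x$ is; and it inherits weak $\mcal{A}$-homogeneity directly from $x$. Moreover $M$ takes values in $\AAp$: if $X \geq 0$ then $x(X) \geq x(0) = 0$ by monotonicity and linearity. So the only hypothesis of Lemma \ref{M1} left to verify is regularity of $M$, and this is exactly what the assumed continuity from above gives: any non-increasing sequence $X_n \in \BBp$ with $X_n \downarrow 0$ $P$-a.s. is in particular a non-increasing sequence in $\BB$ with limit $0 \in \BB$, so $M(X_n) = x(X_n) \downarrow x(0) = 0$ $P$-a.s. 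Lemma \ref{M1} then yields that $M$ is $\mcal{A}$-homogeneous, i.e. $x(\lambda X) = \lambda x(X)$ for every $X \in \BBp$ and every $\lambda \in L^+_\infty(\mcal{A})$.

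It then remains to remove the sign restriction on $X$. For arbitrary $X \in \BB$ write $X = X^+ - X^-$ with $X^\pm \in \BBp$, and fix $\lambda \in L^+_\infty(\mcal{A})$. Since $\lambda X^\pm \in \BBp$ and $\lambda X \in \BB$, the previous step together with linearity of $x$ gives
\[
x(\lambda X) = x(\lambda X^+) - x(\lambda X^-) = \lambda x(X^+) - \lambda x(X^-) = \lambda\big(x(X^+) - x(X^-)\big) = \lambda x(X),
\]
which is the claimed $\mcal{A}$-homogeneity.

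I do not anticipate a genuine obstacle: the substantive content has already been packaged into Lemma \ref{M1}. The one point requiring care is making sure that the hypothesis ``$x$ continuous from above'' is invoked with exactly the class of sequences appearing in Definition \ref{d:above} of a regular map; since the domain of $x$ is all of $\BB$, every non-increasing sequence in $\BBp$ tending to $0$ is admissible, so the translation is immediate. An alternative self-contained route bypasses Lemma \ref{M1} altogether: approximate $\lambda$ from below by $\mcal{A}$-measurable simple functions $\lambda_n \uparrow \lambda$, use linearity and weak $\mcal{A}$-homogeneity to obtain $x(\lambda_n X) = \lambda_n x(X)$ for $X \in \BBp$, and pass to the limit using that continuity from above is equivalent to continuity from below for monotone linear operators (as noted after Definition \ref{d:below}), together with $x(X) \geq 0$ so that $\lambda_n x(X) \uparrow \lambda x(X)$ $P$-a.s. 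Either way, the monotone-class/approximation step is the only non-formal ingredient.
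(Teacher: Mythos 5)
Your proposal is correct, and it takes a somewhat different route from the paper's own proof. The paper argues directly: from linearity and weak $\A$-homogeneity it first gets $x(gX)=gx(X)$ for every non-negative simple $\mcal{A}$-measurable $g$ and every $X\in\BB$, then passes to arbitrary $f\in\AAp$ and $X\in\BBp$ by monotone approximation $f_n\uparrow f$, using that for a monotone linear operator continuity from above is equivalent to continuity from below, and finally decomposes $f=f^+-f^-$ and $X=X^+-X^-$. You instead package the approximation step by applying Lemma \ref{M1} to the restriction $M:=x\vert_{\BBp}$, and your verification of its hypotheses is accurate (values in $\AAp$ since $x(X)\geq x(0)=0$, sublinearity and positive homogeneity from linearity, weak $\A$-homogeneity inherited, regularity from continuity from above applied to sequences decreasing to $0\in\BB$); there is no circularity, since Lemma \ref{M1} precedes Lemma \ref{x1} and does not use it. What your reduction buys is economy: the monotone-class/approximation work is done once, inside Lemma \ref{M1}. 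What the paper's direct argument buys is that it proves the identity for all $X\in\BB$ already at the simple-function stage and, in its last line, also records homogeneity for signed $f\in\AA$ (which is what is actually invoked, e.g., in the proof of Theorem \ref{thm1}); your argument delivers that as well with one additional line, writing $\lambda=\lambda^+-\lambda^-$ and using linearity of $x$, so this is a remark rather than a gap. Finally, the alternative self-contained route you sketch at the end is essentially verbatim the paper's own proof.
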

\begin{proof}
From the linearity and the weak $\A$-homogeneity of $x$ we obtain that for every non-negative simple ${\cal A}$-measurable real function $g$ we have
\begin{equation}
x(gX)=gx(X), \qquad X \in \BB. 
\label{eq_1}
\end{equation}
Any $f \in \AAp$ is the $P$-a.s. limit of an increasing sequence of non-negative simple
${\cal A}$-measurable real functions $f_n$. 
As $x$ is linear, monotone, and continuous from above (and hence from below), it follows that $x(fX)$ is the $P$-a.s. limit of the increasing sequence $x(f_nX)$. 
This, together with \eqref{eq_1}, yields $x(fX) = fx(X)$ for $X \geq 0$.
For a general $f\in \AA$ and $X \in \BB$ we have to apply that $f= f^+ -f^-$ and $X= X^+ - X^-$.
\end{proof}

\begin{proposition}
\label{prop1}
Let $x:\BB \rightarrow \AA$ be a  linear operator. 
Assume that the majorant condition \eqref{eqmaj}:
\begin{equation*}
x(X) \leq M(Y), \qquad X \in \BB, Y \in \BBp: \; X \leq Y,
\end{equation*}
is satisfied for some sublinear operator $M:\BBp \rightarrow \AAp$.
Then $x$ is monotone.
Moreover,
\begin{itemize}
\item[(i)]
if $M$ is weak $\A$-homogeneous, then $x$ is weak $\A$-homogeneous,
\item[(ii)]
if $M$ is regular, then $x$ is continuous from above.
\end{itemize}
\end{proposition}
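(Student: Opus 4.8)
The plan is to verify each of the three claims directly from the majorant condition \eqref{eqmaj}, treating the sublinearity (and the extra hypotheses in (i) and (ii)) of $M$ as the only structure available.

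First, \textbf{monotonicity}: suppose $X', X'' \in \BB$ with $X' \leq X''$. Write $X' = X'' - (X'' - X')$ and apply linearity of $x$ to get $x(X') = x(X'') - x(X'' - X')$. Now $-(X''-X') \leq 0 \leq \mathbf{0}$, so applying \eqref{eqmaj} with the choice $X = -(X''-X') $ and majorant $Y = 0 \in \BBp$ gives $x\big(-(X''-X')\big) \leq M(0) = 0$ (recall sublinearity forces $M(0)=0$). Hence $x(X''-X') = -x\big(-(X''-X')\big) \geq 0$, i.e. $x(X') \leq x(X'')$. This settles monotonicity, and incidentally shows $x(X) \leq 0$ whenever $X \leq 0$, and $x(X) \geq 0$ whenever $X \geq 0$.

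For \textbf{(i)}, assume $M$ is weak $\A$-homogeneous. Fix $X \in \BB$ and $A \in \A$; I want $x(1_A X) = 1_A x(X)$. Split $X = X^+ - X^-$; by linearity it suffices to treat $X \in \BBp$. For such $X$, apply \eqref{eqmaj} with $Y = 1_A X \in \BBp$ (valid since $1_A X \leq 1_A X$) to get $x(1_A X) \leq M(1_A X) = 1_A M(X)$ by weak homogeneity. In particular $1_{\Omega \setminus A} x(1_A X) \leq 0$; combined with monotonicity ($1_A X \geq 0 \Rightarrow x(1_A X)\geq 0$) this yields $1_{\Omega\setminus A} x(1_A X) = 0$, so $x(1_A X) = 1_A x(1_A X)$. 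Symmetrically, from $x(X) = x(1_A X) + x(1_{\Omega\setminus A} X)$ and the analogous identity $x(1_{\Omega\setminus A}X) = 1_{\Omega\setminus A} x(1_{\Omega\setminus A}X)$, multiplying by $1_A$ gives $1_A x(X) = 1_A x(1_A X) = x(1_A X)$, as required.

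For \textbf{(ii)}, assume $M$ is regular and let $X_n \downarrow X$ in $\BB$ with $X \in \BB$. By monotonicity $x(X_n)$ is non-increasing and $x(X_n) \geq x(X)$, so it converges $P$-a.s. to some limit $\geq x(X)$; it remains to show the limit is $\leq x(X)$. Set $Z_n := X_n - X \in \BBp$ with $Z_n \downarrow 0$ $P$-a.s. Then $x(X_n) - x(X) = x(Z_n)$, and applying \eqref{eqmaj} with $Y = Z_n$ gives $x(Z_n) \leq M(Z_n)$. Since $M$ is regular, $M(Z_n) \to 0$ $P$-a.s., hence $\limsup_n \big(x(X_n) - x(X)\big) \leq 0$. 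Combined with the lower bound, $x(X_n) \downarrow x(X)$ $P$-a.s., which is continuity from above \eqref{leftcont}.

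The only mild subtlety — and the one place to be careful — is part (i): the majorant condition only controls $x$ from above, so one cannot get $x(1_A X) = 1_A M(X)$ directly but must combine the one-sided bound with monotonicity to pin down the value exactly, and one must remember to reduce to $X \geq 0$ first since $1_A X \leq 1_A X$ is the only admissible majorant relation in general. Everything else is a routine application of linearity plus the defining inequality.
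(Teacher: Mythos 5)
Your proposal is correct and follows essentially the same route as the paper's proof: monotonicity from $x(-X)\leq M(0)=0$ for $X\geq 0$, weak $\A$-homogeneity by combining the one-sided bound $x(1_AX)\leq 1_AM(X)$ with positivity and then splitting $X=X^+-X^-$, and continuity from above via $x(X_n)-x(X)=x(X_n-X)\leq M(X_n-X)$ and regularity. The only cosmetic difference is that the paper states the last bound as $\vert x(X_n)-x(X)\vert\leq M(X_n-X)$, whereas you use the one-sided estimate together with monotonicity, which amounts to the same thing.
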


\begin{proof}
Let $X \in \BBp$, then $-X \leq 0$. 
Thus the sublinearity of $M$ implies $x(-X)\leq M(0)=0$, i.e. $x(X) \geq 0$. The monotonicity of $x$ follows then from its linearity.

(i) \; Let $X \geq 0$. From the monotonicity of $X$, the majorant condition and the $\A$-homogeneity of $M$ we have
$$0 \leq x(1_AX) \leq 1_A M(X).$$
Thus $1_{A^c}x(1_AX)=0$ and $x(1_AX) = 1_A x(1_AX)$.
Similarly we prove that
$1_Ax(1_{A^c}X) = 0$.
Applying the linearity of $x$ we conclude that
$x(1_AX)=1_Ax(1_AX)+1_{A}x(1_{A^c}X) = 1_Ax(X)$.
Namely, weak $\A$-homogeneity \eqref{weak homogeneity} holds for $X \geq 0$.  For a general $X$ in $\BB$, write $X$ as $X= X^+ - X^-$,  where $X^+:=\max\{0,X\}$, $X^-:=\max\{0,-X\}$.
 By linearity, the weak homogeneity, equation (\ref{weak homogeneity}) 
extends to the whole $\BB$.

(ii) \; Let $ X_n \in \BB$ be an decreasing sequence with $P$-a.s. limit $X$. From the linearity, the monotonicity of $x$, and the majorant condition \eqref{eqmaj} we have
\begin{equation}
\vert x(X_n)-x(X) \vert \leq M(X_n-X).
\label{eq1.5}
\end{equation}
Since the sequence $X_n-X$ is decreasing to $0$, from the regularity of $M$, we obtain that $x$ is continuous from above.
\end{proof}

\begin{remark}
Note that if $M: L_\infty^+(\B) \rightarrow L_\infty^+(\A) $ is a sublinear operator such that
\[
x(X) \leq M(Y), \quad X\in L,\, Y \in L_\infty^+(\B):\: X\leq Y,
\]
for some linear operator $x: L\rightarrow\AA$, where $L \subseteq \BB$ is a linear subspace, then it is always possible to construct a monotone sublinear $\tilde M: L_\infty^+(\B) \rightarrow L_\infty^+(\A) $ such that
\[
x(X) \leq \tilde M(Y) \leq M(Y), \quad X\in L,\, Y \in L_\infty^+(\B):\: X\leq Y.
\]
Moreover, if $M$ is regular, then $\tilde M$ is regular.
\end{remark}
\begin{proof}
It is enough to consider $\tilde M(Y) := \inf_{Y'\geq Y} M(Y')$, $Y \in L_\infty^+(\B)$.
\end{proof}

%%%%%%%%
\subsection{A majorant preserving extension theorem}
\label{Sec:Extension}
 In the previous subsection we have  seen that  any $\A$-homogeneous monotone linear operator $x: \BB \rightarrow \AA$, continuous from above (such that $x(1)\geq c$ for some $c>0$)  satisfies the majorant condition \eqref{eqmaj}:
$$
x(X) \leq M(Y), \quad\ X \in \BB, \; Y \in \BBp:\; X \leq Y,
$$ 
for some regular sublinear $\A$-homogeneous operator $M:\BBp \rightarrow \AAp$ (see Corollary \ref{cor1}).
Now we prove that the majorant condition is a sufficient condition for a monotone linear operator defined on a linear subspace $L \subseteq \BB$ in order to have a linear monotone extension to the whole $\BB$.\\
In the sequel we assume that the $\sigma$-algebra ${\cal B}$ is generated by a countable family of events $A_n$, $n \in \N$, by which we mean that the $\sigma$-algebra ${\B}$ is the smallest $\sigma$- algebra on $\Omega$ containing both the sets $A_n, \;n \in \N$ and the $P$-null events. It is for example the case for the Borel $\sigma$-algebra of a metrizable separable space.

\begin{theorem}
Let $x$ be a monotone linear operator defined on a linear subspace $L$ of $\BB$. Assume that the majorant condition
\begin{equation}
x(X) \leq M(Y), \quad  X \in L,\;Y \in \BBp: \;X \leq Y,
\label{eqmaj3}
\end{equation}
is satisfied
for some regular, weak $\A$-homogeneous, and sublinear operator 
$M:\BBp \rightarrow \AAp$.
Then $x$ can be extended into a monotone linear operator defined on $\BB$ such that the majorant condition
\begin{equation}
x(X) \leq M(Y),\quad X \in \BB,\;Y \in \BBp: \;X \leq Y,
\label{eqmaj4}
\end{equation}
is satisfied. \\
Furthermore $x$ is continuous from above \eqref{leftcont} and $\A$-homogeneous.
\label{thm2.1}
\end{theorem}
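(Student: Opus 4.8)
\emph{Plan.} The statement is exactly of Hahn--Banach type, the only twist being that the target is the ordered space $\AA$ rather than $\R$; accordingly the order completeness (Dedekind completeness) of $L_\infty(\A)$ will play the role usually played by the completeness of $\R$. So the plan is: (a) show that a monotone linear $x$ defined on a subspace $L'\subsetneq\BB$ and satisfying the majorant condition against $M$ on $L'$ can be extended, keeping linearity and the majorant condition, to the subspace $L'\oplus\R X_0$ for any $X_0\in\BB\setminus L'$; (b) pass to all of $\BB$ by Zorn's lemma; (c) read off the remaining properties (monotonicity, continuity from above, $\A$-homogeneity) for free. The only genuinely nontrivial part is (a).

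\emph{The one-step extension.} Fix $X_0\in\BB\setminus L$ and try $x(X+tX_0):=x(X)+tv$ with $v\in\AA$ to be chosen; this is automatically linear on $L\oplus\R X_0$. Splitting the required inequality $x(X+tX_0)\le M(Y)$ (for $X\in L$, $t\in\R$, $Y\in\BBp$ with $X+tX_0\le Y$) according to the sign of $t$, the case $t=0$ is the hypothesis on $L$, while the cases $t>0$ and $t<0$ are seen to be equivalent to
\[
v \le \frac{M(Y')-x(X')}{t}\quad\text{whenever } t>0,\ X'\in L,\ Y'\in\BBp,\ tX_0\le Y'-X',
\]
\[
v \ge \frac{x(X)-M(Y)}{s}\quad\text{whenever } s>0,\ X\in L,\ Y\in\BBp,\ sX_0\ge X-Y,
\]
respectively. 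Such a $v$ exists provided that every member of the lower family does not exceed any member of the upper one; after cross-multiplying and using linearity of $x$ together with sublinearity and positive homogeneity of $M$, this reduces to showing $x(tX+sX')\le M(tY+sY')$. But $tX+sX'\in L$ and $tY+sY'\in\BBp$, and adding the two constraints (after multiplying the first by $s$ and the second by $t$, so that the $X_0$ terms cancel) yields $tX+sX'\le tY+sY'$; hence the inequality follows from the majorant condition on $L$. Both families are nonempty — e.g. take $X=X'=0$, $s=t=1$, $Y=X_0^-$, $Y'=X_0^+$, giving the members $-M(X_0^-)$ and $M(X_0^+)$ — and, being mutually order-bounded, admit in the Dedekind complete lattice $\AA$ a supremum $\alpha$ and an infimum $\beta$ with $\alpha\le\beta$; any $v$ in the nonempty order interval $[\alpha,\beta]$ works.

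\emph{Globalization and the remaining properties.} I would then order the family of pairs $(L',x')$, where $L\subseteq L'\subseteq\BB$ is a subspace and $x'$ is a linear extension of $x$ satisfying the majorant condition on $L'$, by extension; a chain has an upper bound given by the union of domains and the union of graphs (linearity survives because the domains are nested, and the majorant condition is a requirement ``pointwise in the argument''). By Zorn's lemma a maximal pair exists, and by the one-step construction its domain is all of $\BB$; when $\B$ is countably generated one may instead replace Zorn's lemma by a countable exhaustion. The resulting $x:\BB\to\AA$ is linear and satisfies the majorant condition \eqref{eqmaj4} against $M$, which is sublinear, regular, and weak $\A$-homogeneous. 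Proposition~\ref{prop1} then gives at once that $x$ is monotone, weak $\A$-homogeneous, and continuous from above, and Lemma~\ref{x1} upgrades the weak $\A$-homogeneity of a monotone linear operator continuous from above to genuine $\A$-homogeneity, completing the proof.

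\emph{Main obstacle.} The delicate point is the operator-valued Hahn--Banach step: one must check that the competing families of lower and upper bounds for $v$ are nonempty and mutually order-bounded so that their supremum and infimum exist \emph{in $\AA$}, which is precisely where the Dedekind completeness of $L_\infty(\A)$ (rather than the completeness of $\R$) is used. Everything after that — the Zorn argument and the transfer of regularity from $M$ to $x$ — is routine given Proposition~\ref{prop1} and Lemma~\ref{x1}.
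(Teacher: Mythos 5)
Your proof is correct, and the one-step extension is essentially the same computation as in the paper (your cross-inequality $t\,x(X)+s\,x(X')\le t\,M(Y)+s\,M(Y')$ is the paper's $-x(X')-x(X'')\le M(Y')+M(Y'')$ giving $a\le b$, with the scaling folded into the families rather than handled afterwards by homogeneity). Where you genuinely diverge is the globalization: you run the classical Hahn--Banach--Kantorovich argument, using the Dedekind completeness of $L_\infty(\mathcal A)$ for the one-step choice of $v$ and Zorn's lemma to reach all of $L_\infty(\mathcal B)$, and only then invoke Proposition \ref{prop1} and Lemma \ref{x1}. The paper deliberately avoids Zorn: it uses the standing assumption (stated just before the theorem) that $\mathcal B$ is countably generated, extends $x$ by one-step extensions to the space $K$ spanned by $L$ and finitely many indicators, and then reaches $L_\infty(\mathcal B)$ by a monotone-class argument, where the regularity of $M$ is what makes the values at monotone limits well defined and preserves the majorant condition in the limit. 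Your route buys generality (no countable generation of $\mathcal B$ is needed) and brevity, at the price of the axiom of choice, which the authors explicitly wanted to avoid in this $L_\infty$ setting; the paper's route is constructive and produces a canonical extension on the monotone closure of $K$, but needs the countability hypothesis and the extra limit bookkeeping. One small caveat: your aside that Zorn could be ``replaced by a countable exhaustion'' when $\mathcal B$ is countably generated is too quick as stated, since $L_\infty(\mathcal B)$ is not separable and a countable sequence of one-step extensions does not exhaust it; what actually closes the gap in the countably generated case is precisely the paper's monotone-limit argument using the regularity of $M$, not mere countable iteration. This does not affect your main proof, which stands on the Zorn argument alone.
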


\begin{proof}
As in the proof of Theorem 4.1 in \cite{ADR2005}, we begin by a one-step extension.
This is a classical approach already present in the original proof of the Hahn-Banach theorem.
Let $Y^0 \in \BB-L$. We want to extend $x$ by the formula $x(X+\lambda Y^0)=x(X)+\lambda Z$ for some $Z \in \AA$.
Let 
$$
a =\esssup_{X' \in L,Y'\in \BBp:\, -X'-Y'\leq Y^0} [-x(X')-M(Y')]
$$
$$
b=\essinf_{X'' \in L,Y''\in \BBp: \, X''+Y''\geq Y^0} [x(X'')+M(Y'')].
$$
Note that $ -X'-Y'\leq Y^0 \leq  X''+Y''$. Thus $-X'-X''\leq Y'+Y''$.
From the majorant condition (\ref{eqmaj3}) and the sublinearity of $M$ it follows that $-x(X')-x(X'')\leq M(Y')+M(Y'')$.
Thus $a \leq b$.
Choose $Z$ such that $a \leq Z \leq b$. It is then easy to verify that the extension of $x$ to the linear space $L+\R Y^0$ satisfies the majorant condition. 
Then, since $M(0)=0$, the monotonicity of the extension of $x$ follows from the majorant condition.\\
Now, let $A_n$, $n \in \N$, be a countable family of events in $\Omega$ generating the $\sigma$-algebra ${\B}$. 
Consider the linear subspace $K$ of $\BB$ generated by $L$ and the indicator functions $1_B$ where $B$ is the intersection of only a finite number of sets among $A_n$, $n \in \N$ and their complements $A_n^c=\Omega-A_n$, $n \in \N$. 
Applying the argument above, $x$ can be extended to $K$ as a linear monotone operator satisfying the majorant condition. \\
Let $E$ be a linear subspace of $\BB$. 
Assume that $x$ is extended to $E$ and that this extension is linear monotone and satisfies the majorant condition. 
Let $X_n$ and $Y_n$ be two  increasing sequences of elements of $E$ having the same limit $X \in \BB$. The sequences $x(X_n)$ and $x(sup(X_n, Y_n))$, are increasing and both majorized by $M (|X|)$. 
Therefore they converge in $\AA$ with limit $Y$ and $Z$, respectively, and such that $Y \leq Z$.
Note that
\begin{equation}
x(sup(X_n, Y_n))-x(X_n) \leq M(sup(X_n, Y_n)-X_n).
\label{eq3.1}
\end{equation}
The sequence $l_n :=\sup_{k \geq n}(sup(X_k, Y_k)-X_k)$ is decreasing and has limit $0$. 
Thus, as $M$ is regular, from \eqref{eq3.1} and the majorant condition we conclude that $Z-Y=0$. 
Then the sequences $x(X_n)$ and $x(Y_n)$ have the same limit.\\
In the same manner if  $X_n$ and $Y_n$ are  two  decreasing sequences of elements of $E$ having the same limit $X \in \BB$, the corresponding sequences $x(X_n)$ and $x(Y_n)$ have the same limit. 
Moreover, if $X_n$ is increasing to $X$ and $Y_n$ decreasing to $X$, from the majorant condition and the regularity of $M$, it follows that $x(Y_n)-x(X_n)$ has limit $0$.\\
Therefore $x$ can be extended in a unique way to a linear subspace $E$ of $\BB$ containing $K$ and containing the limit of all increasing and decreasing sequences of elements of $E$. From the monotone class theorem, it follows that $E$ contains $1_A$ for every set $A$ belonging to the $\sigma$-algebra generated by the sets $A_n $, $n=1,2,...$ (i.e. the $\sigma$-algebra $\B$). 
Recall that any non-negative ${\B}$-measurable function is the increasing $P$-a.s. limit of a sequence of linear combinations of indicators $1_A$, $A \in {\B}$. 
As $E$ is a sublinear space of $\BB$, this proves that $E=\BB$.
Furthermore this extension is obviously monotone. \\
Denote $S$ the subset of all $X \in  E$ satisfying the majorant condition, i.e. 
$x(X) \leq M(Y)$, $Y \in \BBp$: $X \leq Y$.
Then $S$ is obviously stable for the limit of increasing sequences. 
On the other hand, if $X \in E$ is the limit of a decreasing sequence $X_n$ of elements of $S$,  $X \leq Y$ and $X_n \leq sup(Y, X_n)$, then 
$x(X_n) \leq M(sup(Y, X_n)$. 
From the monotonicity and the sublinearity of $M$, it follows that $M(Y) \leq M(sup(Y, X_n)) \leq M(Y)+ M(sup(Y, X_n) -Y)$. 
As $M$ is regular, it follows that $M(sup(Y, X_n))$ has limit $M(Y)$. 
Thus we have
$$
x(X) = \lim_{n\to \infty} x(X_n) \leq \lim_{n\to \infty} M(\sup(Y,X_n)) = M(Y).
$$ 
Namely, $X$ satisfies the majorant condition.
Hence $X \in S$. 
We conclude that the set $S$ is stable for limits of both increasing and decreasing sequences. 
Hence $S=E=\BB$.
From Proposition \ref{prop1} we conclude directly that $x$ is continuous from above \eqref{leftcont}, from Lemma \ref{x1} we conclude that it is $\A$-homogeneous.
\end{proof}

%%%%%%%%%%%%%%%
\subsection{A sandwich preserving extension theorem}
\label{Sec:Sandwich}
This section deals with the  sandwich condition for operators in $L_{\infty}$-spaces and related extension theorems.

Let $M:\BBp \rightarrow \AAp$  be a  sublinear operator, see \eqref{sublinear}, and 
$m:\BBp \rightarrow \AAp$ be a \emph{superlinear} operator, i.e.
\begin{equation}
\label{superlinear}
m(X+Y) \geq m(X) + m(Y)
\end{equation}
$$
m(\lambda X) = \lambda m(X)\quad \lambda \geq 0.
$$
Let $x: \BB \rightarrow \AA$  be a  linear operator satisfying the \emph{sandwich condition}:
\begin{equation}
m(X) \leq x(X) \leq M(X), \quad \textrm{for all }X \in \BBp.
\label{mM}
\end{equation}
Note that the sandwich condition (\ref{mM}) is equivalent to the following condition:
\begin{equation}
\begin{split}
m(Z)+ &x(X'') \leq x(X')+M(Y), \\
& \textrm{for all }X', X'', Y,Z \in \BBp:\; Z+X'' \leq X'+Y.
\end{split}
\label{san0}
\end{equation}
Moreover, it is also equivalent to:
\begin{equation}
\begin{split}
m(Z)+& x(X) \leq M(Y), \\
&\textrm{for all } X \in \BB, \;  Y,Z \in \BBp:\; Z+X \leq Y.
\end{split}
\label{san01}
\end{equation}
To see that \eqref{san01} implies \eqref{san0}, it is enough to apply the first one with $X=X''-X'$.
Conversely, note that, for any $X \in \BB$, the elements $X^+$ and $X^-$ also belong to $\BB$.
We can then apply (\ref{san0}) with $X''=X^+$ and $X'=X^-$. 
Using the linearity of $x$, it is easy to see that \eqref{mM} is equivalent to \eqref{san01}.\\
Note that, in case our operator $x$ was defined on a convex cone instead of a linear subspace, then the sandwich condition should be expressed as \eqref{san0} only.

\vspace{2mm}
We adress the question of the existence of a sandwich extension to $\BB$ of a monotone linear operator $x$ defined on a linear subspace $L \subseteq \BB$ and taking values in $\AA$.
In \cite{ADR2005}, the characterization of the existence of such an extension was adressed for operators $x$ defined on convex subcones of $L_p(\B)$, $p\in [1,\infty)$. 
The proof given in \cite{ADR2005} is using the K\"onig sandwich theorem for functionals proved in \cite{FL} crucially relying on Zorn lemma.
In the present paper, we work in the context of $L_{\infty}$-spaces and we give a different constructive proof inspired by the proof of Theorem \ref{thm2.1}. 
This idea could also be applied in the $L_p$-context to give a new proof for Theorem 5.1 in \cite{ADR2005}, if the operators were defined on linear subspaces instead of convex subcones.

We stress that, for a general linear subspace $L \subseteq \BB$, the fact that $X \in L$ does not imply that $X^+ \in L$. 
The sandwich relation~(\ref{san01}), applied with $X\in L$, will then play a crucial role in the results that follow.

\begin{theorem}
Let $L$ be a linear subspace of $\BB$. 
Let $M:\BBp \rightarrow \AAp$  be a regular sublinear  operator and $m :\BBp \rightarrow \AAp$ be a superlinear operator. 
Let $x:L \rightarrow \AA$ be a linear operator satisfying the sandwich condition:
\begin{equation}
\begin{split}
m(Z)+& x(X) \leq M(Y), \\
&\textrm{for all } X \in L, \;  Y,Z \in \BBp:\; Z+X \leq Y.
\end{split}
\label{san}
\end{equation}
Then $x$ admits a monotone linear extension on the whole $\BB$. 
Moreover, the extension $x: \BB \rightarrow \AA$ is continuous from above \eqref{leftcont} and satisfies the
sandwich condition:
\begin{equation}
\begin{split}
m(Z)+& x(X) \leq M(Y), \\
&\textrm{for all } X \in \BB, \;  Y,Z \in \BBp:\; Z+X \leq Y.
\end{split}
\label{san2}
\end{equation}
which can equivalently be written as:
\begin{equation*}
m(X) \leq x(X) \leq M(X), \quad\ X \in \BBp.
\label{san3}
\end{equation*}
\label{thm3.1}
\end{theorem}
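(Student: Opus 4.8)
I would follow the constructive scheme of the proof of Theorem~\ref{thm2.1}, the only genuinely new ingredient being a \emph{two-sided} Hahn--Banach step now involving the superlinear $m$ alongside the sublinear $M$. Fix $Y^0\in\BB\setminus L$ and look for $W\in\AA$ so that $x(X+\lambda Y^0):=x(X)+\lambda W$ is a sandwich-preserving extension to $L+\R Y^0$. Set
\[
a:=\esssup_{X'\in L,\,Y',Z'\in\BBp:\;Z'+X'-Y^0\leq Y'}\big[m(Z')+x(X')-M(Y')\big],
\]
\[
b:=\essinf_{X''\in L,\,Y'',Z''\in\BBp:\;Z''+X''+Y^0\leq Y''}\big[M(Y'')-m(Z'')-x(X'')\big].
\]
Both families are nonempty (take $X=Z=0$, $Y=\|Y^0\|_\infty 1$), which gives $a\geq-\|Y^0\|_\infty M(1)$ and $b\leq\|Y^0\|_\infty M(1)$. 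For admissible primed and doubly-primed triples one has $Z'+Z''+(X'+X'')\leq Y'+Y''$ with $X'+X''\in L$, so \eqref{san}, superlinearity of $m$ and sublinearity of $M$ give
\[
m(Z')+m(Z'')+x(X')+x(X'')\leq m(Z'+Z'')+x(X'+X'')\leq M(Y'+Y'')\leq M(Y')+M(Y'');
\]
hence $a\leq b$, both lie in $\AA$, and I may choose any $W$ with $a\leq W\leq b$. Splitting into the cases $\lambda>0$, $\lambda<0$, $\lambda=0$ (reducing to $|\lambda|=1$ by positive homogeneity of $m$ and $M$) one checks routinely that the extended operator on $L+\R Y^0$ is linear and satisfies the sandwich condition~\eqref{san} on that larger subspace.

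\textbf{Passage to $\BB$ and preservation of the sandwich condition.} As in Theorem~\ref{thm2.1}: iterate the one-step extension over the indicators of finite intersections of the countable generators $A_n$ of $\B$ and their complements to reach a subspace $K$; then extend to the smallest linear subspace $E\supseteq K$ closed under monotone sequential limits, the value at such a limit being the corresponding monotone limit of values. Well-definedness of this last step is proved verbatim as in Theorem~\ref{thm2.1}, using only the majorant half $x(V)\leq M(V)$, $V\in\BBp$, obtained from \eqref{san} with $Z=0$, together with regularity of $M$; the monotone class theorem then yields $E=\BB$. Every operation in the construction — one-step linear extensions and monotone limits — preserves membership in the set $S$ of elements satisfying \eqref{san2}: the former by the one-step argument above, and for the latter, stability under increasing limits is immediate (if $X_n\uparrow X$ and $Z+X\leq Y$ then already $Z+X_n\leq Y$), while for $X_n\downarrow X$ in $S$ and $Z+X\leq Y$ one writes $Z+X_n\leq\sup(Y,Z+X_n)\leq Y+(X_n-X)$ and passes to the limit using sublinearity and regularity of $M$. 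Hence the extension on $\BB$ satisfies \eqref{san2}, equivalently $m(X)\leq x(X)\leq M(X)$ on $\BBp$; monotonicity follows from $x(-V)\leq M(0)=0$ for $V\in\BBp$ and linearity, and for $X_n\downarrow X$ the estimate $|x(X_n)-x(X)|\leq M(X_n-X)$ — applying \eqref{san2} with $Z=0$, $Y=X_n-X$ to each of $X_n-X$ and $X-X_n$ — together with regularity of $M$ gives continuity from above.

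\textbf{Main obstacle.} The crux is the one-step extension, specifically the inequality $a\leq b$ and the verification that the resulting $W$ is sandwich-preserving. Both must be carried out in the form~\eqref{san01}/\eqref{san} in which the ``linear'' variable ranges over $L$ — as stressed before the theorem, one cannot replace $X$ by $X^+$ there — and it is precisely the simultaneous use of superlinearity of $m$ and sublinearity of $M$ on the summed relation $Z'+Z''+(X'+X'')\leq Y'+Y''$ that makes the two one-sided bounds compatible. Everything after that is a faithful, and in fact slightly lighter, transcription of the proof of Theorem~\ref{thm2.1}: regularity is required only of $M$, never of $m$, which is exactly the weakening of hypotheses announced in the introduction and needed for the no-good-deal application.
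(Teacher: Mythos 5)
Your proposal is correct and follows essentially the same route as the paper: the same two-sided one-step extension (your $a\leq b$ argument, summing the two constraints and using \eqref{san} together with superlinearity of $m$ and sublinearity of $M$, is exactly the paper's $c\leq d$ step, up to the sign convention on the $L$-variable), followed by the same passage to $\BB$ via indicators of the generators, monotone limits and the monotone class theorem, with the sandwich condition checked under increasing limits directly and under decreasing limits via $Z+X_n\leq Y+(X_n-X)$ and regularity of $M$ only. The minor additions (nonemptiness/boundedness of the defining families, the explicit derivation of monotonicity and continuity from above) are harmless refinements of the same argument.
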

\begin{proof}
The proof follows the same lines as the proof of Theorem \ref{thm2.1}.\\
{\bf Step 1.} We begin with a one step extension.
Let $Y^0 \in \BB-L$. 
Let 
$$
c=\esssup_{X' \in L,Y', Z'\in \BBp:\, Z'-X'-Y'\leq Y^0}[m(Z')-x(X')-M(Y')]
$$
and
$$
d=\essinf_{X'' \in L,Y'',Z''\in \BBp:\, X''+Y''-Z''\geq Y^0}[x(X'')+M(Y'')-m(Z'')].
$$
Note that $ Z'-X'-Y'\leq Y^0 \leq  X''+Y''-Z''$. 
Thus $Z''+Z'-X''-X'\leq Y'+Y''$.
From the sandwich condition  (\ref{san}), the sublinearity of $M$, and the superlinearity of $m$, it follows that $m(Z'')+m(Z')-x(X'')-x(X')\leq M(Y')+M(Y'')$.
Hence $c \leq d$. 
Choose $y^0 \in \AA$ such that $c \leq y^0 \leq d$. 
Define the operator $x$ on $L +\R Y^0$ by $x(X+\lambda Y^0):=x(X)+\lambda y^0$.
We now prove that the sandwich inequality (\ref{san}) is satisfied for all $X\in L + \R Y^0$.
Let $ Y,Z \in \BBp$, $X \in L$, $\lambda \in \R$: $Z+X+ \lambda Y^0 \leq Y$. 
If $\lambda=0$, the sandwich condition is just the same as in the hypothesis.
Assume that $\lambda>0$.
Then $Y^0 \leq \frac{1}{\lambda}Y -\frac{1}{\lambda}Z-\frac{1}{\lambda}X$. 
Since $y^0 \leq d$, from the definition of $d$, it follows that $m(Z)+x(X)+\lambda y^0 \leq m(Z)+x(X)+ \lambda \Big(M(\frac{1}{\lambda}Y)-m(\frac{1}{\lambda}Z)+x(-\frac{1}{\lambda}X)\Big)=M(Y)$. 
Here we have applied the homogeneity of $m,\;M$ and the linearity of $x$.
This proves  the sandwich condition (\ref{san}) for $X\in L+\lambda Y^0$ with $\lambda >0$.
The proof of the sandwich inequality for $\lambda<0$ is similar, using the inequality $c \leq y^0$. 
Moreover, for every $X \in L  + \R Y^0$ and $Y \in \BBp$ such that $X \leq Y$, we have $x(X) \leq M(Y)$. 

{\bf Step 2.} As in the proof of Theorem \ref{thm2.1}, we proceed by extending $x$ to the sublinear space of $\BB$ generated by $L$ and the indicators $1_B$, where $B$ is either the intersection or the union of a finite number of $A_n$ or $A_n^c=\Omega -A_n$, $n \in \N$, generating $\B$.
Then we perform a further extension to the whole space $\BB$.
In order to prove that the sandwich condition  (\ref{san}) is satisfied, it is enough to show it for every $X \in \BB$ which is the increasing (and also for the decreasing) limit of a sequence $X_n$ of elements of $\BB$ satisfying the sandwich inequality. 
Let $Z,Y \in \BBp$ such that $Z+X \leq Y$. If $X$ is the increasing limit of $X_n$, by monotonicity of $x$, we just take the limit in the inequality
$m(Z)+x(X_n) \leq M(Y)$.
If $X$ is the decreasing limit of $X_n$, then $Z+X_n \leq Y+(X_n-X)$ and $m(Z)+x(X_n) \leq M(Y)+ M(X_n-X)$. Letting $n \to \infty$ and using the regularity of $M$, we complete the proof of the result.
\end{proof}

The preceding proof gives also a constructive new proof of Theorem 5.1. of \cite{ADR2005} in the case the operator $x$ is defined on a linear subspace $L\subseteq L_p(\mathcal{B})$, $1 \leq p < \infty$. 
Note that in the $L_p$-spaces, the continuity is with respect to the norm.

\begin{proposition}
 Let $L$ be a linear subspace of $L_p({\cal B})$, for $1 \leq p <\infty$. Let $M$  be a sublinear continuous operator 
$M:L_p({\cal B}) \rightarrow L_p({\cal A})$. Let $m$ be a superlinear operator
$m:L_{p}({\cal B}), \rightarrow L_{p}({\cal A})$. Let $x:L \rightarrow L_{p}({\cal A})$ be a linear operator.
Assume that the sandwich condition
\begin{equation}
\begin{split}
m(Z)+& x(X) \leq M(Y),\\
&\textrm{for all} Y,Z \in L_{p}({\cal B})\; X \in L:\; Z+X \leq Y,
\end{split}
\label{sand}
\end{equation}
is satisfied. 
Then $x$ admits  a monotone, continuous, linear extension preserving the
sandwich condition:
\begin{equation}
\begin{split}
m(Z)+& x(X) \leq M(Y), \\
&\textrm{for all } X \in L_p(\mcal{B}), \;  Y,Z \in L^+_p(\mcal{B}):\; Z+X \leq Y.
\end{split}
\label{sand2}
\end{equation}
This condition can also be written:
\begin{equation}
m(X) \leq x(X) \leq M(X), \quad \textrm{for all}  X \in L_{p}({\cal B}).
\label{sand3}
\end{equation}
\end{proposition}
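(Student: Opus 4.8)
The plan is to transcribe the proof of Theorem~\ref{thm3.1} almost verbatim, with $L_p(\mathcal{B})$ and $L_p(\mathcal{A})$ in place of $L_\infty(\mathcal{B})$ and $L_\infty(\mathcal{A})$, norm convergence in place of a.s.\ monotone convergence, and the norm-continuity of $M$ in place of its regularity. Since $\mathcal{B}$ is countably generated, $L_p(\mathcal{B})$ is separable, and this separability will take over the role played in Theorems~\ref{thm2.1}--\ref{thm3.1} by the monotone class theorem. Two elementary observations are used throughout: first, positive homogeneity forces $m(0)=M(0)=0$; second, a positively homogeneous operator that is continuous at $0$ is bounded --- if $\|M(W)\|_p\leq\varepsilon$ whenever $W\in L^+_p(\mathcal{B})$ has $\|W\|_p\leq\delta$, then $\|M(W)\|_p\leq(2\varepsilon/\delta)\|W\|_p$ for all such $W$. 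Combining the second observation with~\eqref{sand} applied, with $Z=0$ and $Y=X^+$, to $X$ and to $-X$ (which gives $-M(X^-)\leq x(X)\leq M(X^+)$) yields a bound $\|x(X)\|_p\leq C\|X\|_p$, valid on $L$ --- and, by the same argument, on any intermediate linear subspace of $L_p(\mathcal{B})$ on which a linear extension of $x$ satisfying~\eqref{sand} has been defined --- with $C$ depending only on $M$.

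\emph{One-step extension.} For $Y^0\in L_p(\mathcal{B})\setminus L$ one defines, exactly as in Step~1 of the proof of Theorem~\ref{thm3.1} but with $L^+_p(\mathcal{B})$ replacing $L^+_\infty(\mathcal{B})$, an essential supremum $c$ over triples $(X',Y',Z')$ with $X'\in L$, $Y',Z'\in L^+_p(\mathcal{B})$ and $Z'-X'-Y'\leq Y^0$, and an essential infimum $d$ over triples $(X'',Y'',Z'')$ with $X''+Y''-Z''\geq Y^0$. Taking $X'=Z'=0$, $Y'=(Y^0)^-$ and $X''=Z''=0$, $Y''=(Y^0)^+$ shows $-M((Y^0)^-)\leq c$ and $d\leq M((Y^0)^+)$, so $c$ and $d$ are order-bounded and hence well defined in the Dedekind complete lattice $L_p(\mathcal{A})$; and from $Z'-X'-Y'\leq Y^0\leq X''+Y''-Z''$ together with~\eqref{sand}, the subadditivity of $M$, and the superadditivity of $m$ one obtains $c\leq d$. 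Choosing $y^0$ with $c\leq y^0\leq d$ and setting $x(X+\lambda Y^0):=x(X)+\lambda y^0$, the three cases $\lambda=0$, $\lambda>0$, $\lambda<0$ (using positive homogeneity of $m$ and $M$ and linearity of $x$) show that~\eqref{sand} persists on $L+\R Y^0$.

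\emph{From a dense subspace to $L_p(\mathcal{B})$.} Fix a countable dense set $\{d_n\}_{n\geq1}$ in $L_p(\mathcal{B})$ and apply the one-step extension successively to $d_1,d_2,\dots$; this yields an increasing chain $L=L_0\subseteq L_1\subseteq\cdots$ of subspaces with compatible linear extensions of $x$, each satisfying~\eqref{sand}, so that $E:=\bigcup_n L_n$ is a dense linear subspace of $L_p(\mathcal{B})$ carrying a linear extension of $x$ for which~\eqref{sand} holds. By the bound of the first paragraph, $x$ is norm-continuous on $E$, hence extends uniquely to a continuous linear operator on $\overline{E}=L_p(\mathcal{B})$. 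To see that~\eqref{sand2} survives this passage, take $X\in L_p(\mathcal{B})$ and $Y,Z\in L^+_p(\mathcal{B})$ with $Z+X\leq Y$, and choose $E\ni X_n\to X$ in norm; then $Z+X_n\leq Y+(X_n-X)^+$, so~\eqref{sand2} on $E$ together with the subadditivity of $M$ gives $m(Z)+x(X_n)\leq M(Y)+M((X_n-X)^+)$, and since $\|(X_n-X)^+\|_p\leq\|X_n-X\|_p\to0$ the continuity of $M$ forces $M((X_n-X)^+)\to0$; passing to an a.s.\ convergent subsequence and using $x(X_n)\to x(X)$ yields $m(Z)+x(X)\leq M(Y)$. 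Monotonicity of the extension (from $0\leq m(X)\leq x(X)$ for $X\geq0$ and linearity) and the equivalence of~\eqref{sand2} with~\eqref{sand3} then follow as recorded before Theorem~\ref{thm3.1}.

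The delicate step is this last passage to the limit. Because $m$ is only assumed superlinear, with no continuity available, one must work with the sandwich relation in the form~\eqref{sand2}, where the argument $Z$ of $m$ is held fixed while only $X$ is perturbed, rather than with the symmetric form~\eqref{sand3}; and the norm-continuity of $x$, needed in order to extend it to the closure, has to be extracted by hand from $M$, since in $L_p$ a merely monotone linear operator need not be norm-continuous. Everything else is a faithful transcription of the proof of Theorem~\ref{thm3.1}.
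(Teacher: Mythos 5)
Your proposal is correct and follows essentially the same route as the paper's own proof: a one-step extension exactly as in Theorem~\ref{thm3.1}, then extension to the subspace spanned by $L$ and a countable dense family, norm-continuity of $x$ extracted from the majorant bound and the continuity of $M$, unique extension to all of $L_p(\mathcal{B})$, and recovery of the sandwich in the limit by replacing $Y$ with $Y+|X_n-X|$ (your $Y+(X_n-X)^+$) and invoking the norm continuity of $M$. The extra details you supply (order-boundedness of $c$ and $d$ via $-M((Y^0)^-)$ and $M((Y^0)^+)$, the explicit operator bound $\|x(X)\|_p\leq C\|X\|_p$, the a.s.\ subsequence at the end) are exactly the points the paper leaves implicit.
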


\begin{proof}
{\bf Step 1.} 
The one-step extension is proved following the same lines as in Theorem \ref{thm3.1}.

{\bf Step 2.} 
Consider a countable family $f_n$ of elements of $L_{p}({\cal B})$ such that the linear subspace $K$ generated by $f_n$ is dense in $L_{p}({\cal B})$. 
Applying the first step we get the extension of $x$ to $L+K$, such that the sandwich condition is satisfied. 
From the majorant condition $x$ is continuous and thus uniquely extended to the whole $L_p(\mcal{B})$.
In order to prove the sandwich condition, for $Z+X \leq Y$ we consider $Y_n=Y+|X_n-X|$, then $Z+X_n \leq Y_n$. The sandwich inequality follows then from the norm continuity of $M$.
\end{proof}

%%%%%%%%%%
\subsection{Topological versions of the extension theorems}

Recall that the weak*topology on $\BB$ denoted $\sigma(  \BB, L^1({\cal B}))$ is the coarsest topology on $\BB$ such that for every $f \in  L^1({\cal B})$, the map $X \in \BB \;\rightarrow E(fX) \in \R$ is continuous.
\begin{lemma}Let $M: \BBp \rightarrow \AAp$ be a weak* continuous operator. Then $M$ is regular.
\label{lemmawcreg}
\end{lemma}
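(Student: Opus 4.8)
The plan is to show directly that weak\textsuperscript{*} sequential continuity of $M$ forces the defining convergence \eqref{Mcont} of regularity. Let $X_n \in \BBp$ be a non-increasing sequence with $X_n \downarrow 0$ $P$-a.s. The first step is to observe that such a sequence automatically converges to $0$ in the weak\textsuperscript{*} topology $\sigma(\BB, L^1(\B))$: indeed, for every $f \in L^1(\B)$ we have $|f X_n| \leq |f| \|X_1\|_\infty \in L^1$ and $fX_n \to 0$ pointwise $P$-a.s., so dominated convergence gives $E[fX_n] \to 0$. Hence $X_n \to 0$ weak\textsuperscript{*}. Applying the weak\textsuperscript{*} continuity of $M$ then yields $M(X_n) \to M(0) = 0$ in $\sigma(\AA, L^1(\A))$, where $M(0)=0$ because a weak\textsuperscript{*} continuous operator that is (implicitly here) sublinear satisfies $M(0)=0$; more safely, regularity of $M$ in Definition \ref{d:above} only asks for $P$-a.s. convergence of $M(X_n)$ to $0$, so it suffices to upgrade weak\textsuperscript{*} convergence of $M(X_n)$ to $P$-a.s. convergence.

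The second and main step is precisely that upgrade: from $M(X_n) \to 0$ weak\textsuperscript{*} in $\AA$ to $M(X_n) \to 0$ $P$-a.s. This does not hold for arbitrary weak\textsuperscript{*}-null sequences, so one must use the extra structure. By monotonicity of $M$ — which one should either assume is part of the standing hypotheses on such operators, or derive: since $0 \leq X_n \leq X_1$ and $M$ sends $\BBp$ to $\AAp$, and since $X_n \downarrow$, one expects the sequence $M(X_n)$ to be non-increasing and non-negative in $\AA$. A non-increasing sequence of non-negative elements of $\AA$ converges $P$-a.s. to some $g \in \AAp$ (by the monotone convergence theorem applied pointwise). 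Then $M(X_n) \downarrow g$ $P$-a.s., and by dominated convergence (dominated by $M(X_1)$) also $E[f M(X_n)] \to E[f g]$ for every $f \in L^1(\A)$. Comparing with the weak\textsuperscript{*} limit $0$ gives $E[fg] = 0$ for all $f \in L^1(\A)$, hence $g = 0$ $P$-a.s. Therefore $M(X_n) \downarrow 0$ $P$-a.s., which is exactly \eqref{Mcont}.

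I expect the monotonicity/non-increasing property of $M$ along $\{X_n\}$ to be the one genuinely delicate point: the cleanest route is to invoke that $M$ is monotone (so $X_n \geq X_{n+1}$ implies $M(X_n) \geq M(X_{n+1})$), which is natural in the context of this paper where all the majorant operators considered are monotone sublinear; alternatively, one uses sublinearity, writing $M(X_n) \leq M(X_{n+1}) + M(X_n - X_{n+1})$, but then one still needs to control the increments, so monotonicity is really the tool one wants here. Once monotonicity is available, the argument is the two-sided application of dominated convergence sketched above — weak\textsuperscript{*} convergence pins the pointwise limit to $0$ — and the proof is complete.
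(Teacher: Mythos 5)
Your first step coincides exactly with the paper's proof: for $X_n\downarrow 0$ in $\BBp$, dominated convergence gives $X_n\to 0$ in $\sigma(\BB,L^1(\B))$, and weak* continuity then gives $M(X_n)\to M(0)$ in $\sigma(\AA,L^1(\A))$; the paper stops here and simply writes $M(X_n)\to 0$. Your second step --- upgrading this weak* convergence to the $P$-a.s.\ convergence required by Definition \ref{d:above}, i.e.\ by \eqref{Mcont} --- goes beyond what the paper does, and you are right that this is the delicate point: weak* convergence of $M(X_n)$ does not by itself give a.s.\ convergence. Your repair, however, uses two properties that are not among the lemma's stated hypotheses and cannot be derived from weak* continuity alone: $M(0)=0$ (available when $M$ is sublinear) and monotonicity of $M$ (so that $M(X_n)$ is a non-increasing sequence in $\AAp$, whose a.s.\ limit is then forced to be $0$ by the weak* limit). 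So, as a proof of the lemma exactly as stated, your argument carries the gap you yourself flag --- but the paper's own proof is no more complete on this point, since it never addresses the mode of convergence at all. In every place the lemma is actually invoked (Proposition \ref{top1} and Proposition \ref{rq3.3}, hence the topological versions of Theorems \ref{thm2.1} and \ref{thm3.1}), the relevant $M$ is sublinear and the quantities being controlled are monotone --- for instance in Proposition \ref{top1} one dominates the monotone sequence $x(X_n)-x(X)=x(X_n-X)\geq 0$ by the weak*-null sequence $M(X_n-X)$, and monotonicity of the dominated sequence already pins its a.s.\ limit to $0$ --- so your monotone-upgrade argument, applied either to $M(X_n)$ itself or to the controlled sequence, is precisely the right way to make the paper's terse proof rigorous in context; the mismatch is with the lemma's literal statement rather than with your reasoning.
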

\begin{proof}
Let $X_n \in \BBp$ such that $X_n \downarrow 0$. From the dominated convergence theorem, $X_n \rightarrow 0$ for the weak* topoplogy, thus $M(X_n) \rightarrow 0$.
\end{proof}
 \begin{proposition}
Let $x: \BB \rightarrow \AA$ be a $\A$-homogeneous monotone linear operator. Assume that  $x(1)=1$. The following conditions are equivalent:
\begin{enumerate}
\item
$x$ is  continuous from below.
\item $x$ is  continuous from above.
\item $x$ is weak* continuous, which means that $x$ is continuous when both $ \BB$ and  $\AA$ are endowed with the weak* topology.
\end{enumerate}
\label{top1}
\end{proposition}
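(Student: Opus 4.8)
The plan is to split the three-way equivalence into: the elementary equivalence $(1)\Leftrightarrow(2)$; the easy implication $(3)\Rightarrow(2)$; and the substantial implication $(2)\Rightarrow(3)$, which I would prove by exhibiting $x$ as the adjoint of a bounded linear operator between the preduals $L^1(\A)$ and $L^1(\B)$. Together these close the loop $(1)\Leftrightarrow(2)\Leftrightarrow(3)$.

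For $(1)\Leftrightarrow(2)$ I would simply observe that if $X_n\uparrow X$ $P$-a.s.\ then $X-X_n\downarrow 0$, and by linearity $x(X)-x(X_n)=x(X-X_n)$, which by monotonicity is a non-increasing sequence of non-negative elements of $\AA$; hence $x(X_n)\uparrow x(X)$ for every such sequence precisely when $x(Y_n)\downarrow 0$ for every $Y_n\downarrow 0$, i.e.\ precisely when $x$ is continuous from above. For $(3)\Rightarrow(2)$ I would invoke Lemma \ref{lemmawcreg}, which shows that weak* continuity of $x$ (restricted to $\BBp$) forces regularity in the sense of Definition \ref{d:above}; and for a monotone linear operator regularity is just continuity from above in the sense of Definition \ref{d:below}, since for $X_n\downarrow X$ one has $X_n-X\downarrow 0$, so $x(X_n)-x(X)=x(X_n-X)\to 0$ $P$-a.s., and $x(X_n)-x(X)$ is non-negative and non-increasing. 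Alternatively one can argue $(3)\Rightarrow(1)$ by hand: if $X_n\uparrow X$ then $X_n\to X$ in $\sigma(\BB,L^1(\B))$ by dominated convergence, hence $x(X_n)\to x(X)$ weak* in $\AA$; since $x(X_n)$ is non-decreasing and bounded above by $x(X)$ it has a $P$-a.s.\ limit $g\le x(X)$, and uniqueness of the weak* limit together with dominated convergence forces $g=x(X)$.

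The heart of the matter is $(2)\Rightarrow(3)$. I would use the fact that a linear map between the dual Banach spaces $\BB=(L^1(\B))^{*}$ and $\AA=(L^1(\A))^{*}$ is weak*-to-weak* continuous if and only if it is the adjoint of a bounded linear map $T:L^1(\A)\to L^1(\B)$; equivalently, if and only if for every $h\in L^1(\A)$ the scalar functional $X\mapsto E[h\,x(X)]$ is represented by an element of $L^1(\B)$. To produce the pre-adjoint I would apply the representation Theorem \ref{thm1}: since $x$ is $\A$-homogeneous, monotone, linear, continuous from above and $x(1)=1$, there is $f\in L^+_1(\B)$ with $E[f\vert\A]=1$ and $x(X)=E[fX\vert\A]$ for all $X\in\BB$. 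For $h\in L^1(\A)$ the function $fh$ lies in $L^1(\B)$, with $E[|fh|]=E\big[|h|\,E[f\vert\A]\big]=E[|h|]$, so $T:h\mapsto fh$ is a (linear, in fact isometric) bounded operator $L^1(\A)\to L^1(\B)$; and, by the tower property, $E[h\,x(X)]=E\big[h\,E[fX\vert\A]\big]=E[fhX]=E[(Th)X]$ for every $X\in\BB$. Hence $x=T^{*}$, which is weak* continuous. If one prefers to avoid Theorem \ref{thm1}, the same pre-adjoint can be built directly: for fixed $h\in L^+_1(\A)$ the functional $L_h(X):=E[h\,x(X)]/E[h]$ is positive, linear, satisfies $L_h(1)=1$, and is continuous from above — because $x(X_n)\downarrow 0$ $P$-a.s.\ with $0\le x(X_n)\le\|X_1\|_\infty\,1$ lets dominated convergence pass to the limit — so Lemma \ref{lemma1} represents it by a density in $L^+_1(\B)$, and linearity in $h$ then yields the pre-adjoint on all of $L^1(\A)$.

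The main obstacle is precisely $(2)\Rightarrow(3)$. Its delicate point is that weak* continuity is a genuinely topological statement about nets, not merely a sequential one, so it cannot be read off directly from continuity from above; the pre-adjoint construction is what bridges this gap, at the cost of having to check that $fh\in L^1(\B)$ for every $h\in L^1(\A)$ (equivalently, that the scalar functionals $X\mapsto E[h\,x(X)]$ are continuous from above), which is exactly where Theorem \ref{thm1} — or Lemma \ref{lemma1} — and the ``taking out what is known'' property for non-negative conditional expectations enter.
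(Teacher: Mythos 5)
Your proof is correct and follows essentially the same route as the paper: the equivalence of 1 and 2 via linearity, the implication 3 $\Rightarrow$ 2 via Lemma \ref{lemmawcreg} together with monotonicity, and 2 $\Rightarrow$ 3 via the density $f$ from Theorem \ref{thm1} with $E[f\vert\A]=1$, the observation that $fh\in L^1(\mcal{B})$ for $h\in L^1(\mcal{A})$, and the identity $E[h\,x(X)]=E[fhX]$. The only difference is presentational: you package this computation as exhibiting $x$ as the adjoint of the bounded map $h\mapsto fh$ between the preduals, which has the minor merit of giving weak* continuity for nets directly, whereas the paper runs the identical computation along weak*-convergent sequences.
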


\begin{proof}
The equivalence of 1. and 2. follows from the linearity of $x$, considering  $-X, -X_n$ instead of  $X, X_n$.\\
Next we prove that {\it 2} implies {\it 3}. From Theorem \ref{thm1}, for every $x$ continuous from above there is a probability measure $Q$ such that
 $$x(X)= E_Q[X|{\cal A}], \quad X \in \BB.$$
 There is then $g \in L^+_1({\cal B})$ with $E[g|{\cal A}]=1$ such that $E_Q[X|{\cal A}]=E[gX|{\cal A}]$, $X \in \BB$. For every $f \in L_1({\cal A})$, $fg \in L_1({\cal B})$, indeed $E[|f|g]=E[|f|E[g|{\cal A}]]=E[|f|]$.
Assume now that $X_n \rightarrow X$ for the weak* topology. Let $f \in L_1({\cal A})$.
 $$E(fE_Q(X_n|{\cal A}))=E(fE(gX_n|{\cal A})=E(fgX_n)$$
 As $fg \in L_1({\cal B})$ and $X_n \rightarrow X$ for the weak* topology, it follows that $E(fgX_n) \rightarrow E(fgX)=E(fE_Q(X|{\cal A}))$. Thus $E_Q(X_n|{\cal A})\rightarrow E_Q(X|{\cal A})$ for the weak* topology of $\AA$. 
 This means that $x$ is weak* continuous. \\
Finally we prove that {\it 3} implies {\it 2}. Assume that $x$ is weak* continuous. Let $X_n \downarrow X$ i.e. $X_n -X \downarrow 0$. Thus from Lemma \ref{lemmawcreg}, and linearity of $x$ it follows that $x(X_n) \downarrow x(X)$. thus $x$ is continuous from above. 
\end{proof}
Now we can give a topological version of the Theorems \ref{thm2.1} and \ref{thm3.1}.
\begin{proposition}
Theorems \ref{thm2.1} and \ref{thm3.1} admit a topological version replacing in the hypotheses the regularity 
by the weak* continuity of $M$
and in
the conclusion the continuity from above of $x$ by its weak* continuity.
\label{rq3.3}
\end{proposition}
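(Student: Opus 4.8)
The plan is to reduce to the already-proved Theorems~\ref{thm2.1} and~\ref{thm3.1} and then merely upgrade the conclusion. First I would note that, by Lemma~\ref{lemmawcreg}, weak* continuity of $M$ implies that $M$ is regular; hence the hypotheses of Theorem~\ref{thm2.1} (resp.\ Theorem~\ref{thm3.1}) are satisfied and we obtain a monotone linear extension $x:\BB\to\AA$, continuous from above, satisfying the majorant condition~\eqref{eqmaj4} (resp.\ the sandwich condition~\eqref{san2}), and $\A$-homogeneous in the majorant case. So everything is already in place except that ``continuous from above'' must be replaced by ``weak* continuous''.

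To prove weak* continuity of this extension I would argue one test function at a time, in the spirit of the argument showing that continuity from above implies weak* continuity in the proof of Proposition~\ref{top1}, but \emph{without} using $\A$-homogeneity or a normalization of $x(1)$ --- this matters because in the sandwich case $x$ need not be $\A$-homogeneous, so Theorem~\ref{thm1} is not available. Fix $f\in L^+_1({\cal A})$ and set $L_f(X):=E[f\,x(X)]$ for $X\in\BB$. Since $x$ is monotone and linear, $L_f$ is a positive linear form on $\BB$, and it is continuous from above: if $X_n\downarrow 0$ in $\BB$ then $x(X_n)\downarrow 0$ $P$-a.s.\ by continuity from above of $x$, hence $0\le f\,x(X_n)\le f\,x(X_1)\in L_1({\cal A})$ and $f\,x(X_n)\downarrow 0$ $P$-a.s., so $L_f(X_n)\to 0$ by dominated convergence. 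By Lemma~\ref{lemma1} --- applied to $L_f/L_f(1)$ when $L_f(1)>0$, the case $L_f(1)=0$ forcing $L_f\equiv 0$ by positivity --- there is $g_f\in L^+_1({\cal B})$ with $L_f(X)=E[g_f X]$ for all $X\in\BB$.

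Now if $X_n\to X$ for $\sigma(\BB,L^1({\cal B}))$, then $E[f\,x(X_n)]=E[g_f X_n]\to E[g_f X]=E[f\,x(X)]$ because $g_f\in L_1({\cal B})$. Writing a general $f\in L_1({\cal A})$ as $f^+-f^-$, this convergence holds for every $f\in L_1({\cal A})$, which is precisely the statement that $x(X_n)\to x(X)$ for $\sigma(\AA,L^1({\cal A}))$; hence $x$ is weak* continuous, and the topological versions of both theorems follow. I expect no serious obstacle here: the construction inside the extension theorems and the representation of Lemma~\ref{lemma1} do all the work. The only point requiring a little care is the representation step, since Lemma~\ref{lemma1} is stated under the normalization $L(1)=1$; one must observe that a positive linear form on $\BB$ that is continuous from above is either identically zero (when it vanishes at $1$) or a positive multiple of a normalized one, so that the $L_1$-density $g_f$ always exists.
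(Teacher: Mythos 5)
Your argument is correct, and it is genuinely different from (and in one respect sharper than) the paper's proof. The paper disposes of this proposition in one line: weak* continuity of $M$ gives regularity by Lemma~\ref{lemmawcreg}, so Theorems~\ref{thm2.1} and~\ref{thm3.1} apply and produce an extension continuous from above, and then Proposition~\ref{top1} is invoked to convert continuity from above into weak* continuity. As you observe, that last citation is not a perfect fit: Proposition~\ref{top1} assumes $\A$-homogeneity and the normalization $x(1)=1$, neither of which is guaranteed for the sandwich extension of Theorem~\ref{thm3.1} (and the normalization is not guaranteed in Theorem~\ref{thm2.1} either), since its route goes through the representation of Theorem~\ref{thm1}. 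Your replacement step --- testing against each $f\in L_1^+(\A)$, showing $L_f(X)=E[f\,x(X)]$ is a positive linear form continuous from above, and applying Lemma~\ref{lemma1} (with the harmless normalization, and the degenerate case $L_f(1)=0$ handled by positivity) to get a density $g_f\in L_1^+(\B)$ with $E[f\,x(X)]=E[g_f X]$ --- works for any monotone linear operator continuous from above, so it covers both theorems uniformly and closes that small gap; what you lose is only the brevity of quoting Proposition~\ref{top1}. One presentational remark: since the weak* topology is not metrizable, you should not phrase the conclusion purely in terms of sequences; but your identity $E[f\,x(X)]=E[g_f X]$, valid for all $X\in\BB$ and all $f\in L_1(\A)$, says exactly that $X\mapsto E[f\,x(X)]$ is $\sigma(\BB,L_1(\B))$-continuous for every $f$, which is the genuine (topological, not merely sequential) weak*-to-weak* continuity of $x$, so the argument as a whole delivers the full statement.
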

\begin{proof}
The result follows from Lemma \ref{lemmawcreg}, Proposition \ref{top1} and  from Theorems \ref{thm2.1} and \ref{thm3.1}.
\end{proof}

%%%%%%%%%%%%%%%%%%%%%%%%%%%
\section{A version of the fundamental theorem of asset pricing}\label{main}
In this section we consider a time-consistent family of price operators $x_{st},\st$, where $x_{st}: L_t \rightarrow L_s$. 
We assume that for every $t$, $L_t \subseteq L_T$.
\begin{remark}
For any $s\leq t \leq T$, $x_{st}$ is the restriction to $L_t$ of $x_{sT}$. 
\end{remark}
Indeed let $X \in L_t$, then $x_{tT}(X)=X x_{tT}(1)=X$. Thus  by time-consistency we have $x_{st}(X)=x_{st}(x_{tT}(X))=x_{sT}(X)$, for all $X\in L_t$.

We now introduce a definition of weak time-consistency for a family of sublinear (or superlinear) operators.
\begin{definition}
\begin{itemize}
\item 
The family $M_{st}$, $\st$, of $\mcal{F}_s$-homogeneous, sublinear operators $M_{st}: L_{\infty}^+({\cal F}_t) \rightarrow L_{\infty}^+({\cal F}_s)$ is \emph{weak time-consistent} if, for every $X \in L_{\infty}^+({\cal F}_t)$, 
\begin{equation}
M_{rs}(M_{st}(X)) \leq M_{rt}(X), \quad \forall r \leq s \leq t,
\label{wtcM}
\end{equation}
and
\begin{equation}
M_{st}(X)=lim_{t'>t,t'\downarrow t} M_{st'}(X).
\label{eqwtcM2}
\end{equation}
\item 
The family $m_{st}$, $\st$, of $\mcal{F}_s$-homogeneous, superlinear operators $m_{st}: L_{\infty}^+({\cal F}_t) \rightarrow L_{\infty}^+({\cal F}_s)$ is \emph{weak time-consistent} if, for every $X \in L_{\infty}^+({\cal F}_t)$, 
\begin{equation}
m_{rs}(m_{st}(X)) \geq m_{rt}(X),\quad\forall r \leq s \leq t,
\label{wtcm}
\end{equation}
and
\begin{equation}
m_{st}(X)=lim_{t'>t,t'\downarrow t} m_{st'}(X).
\label{wtcm2}
\end{equation}
\label{defwtc}
\end{itemize}
\end{definition}

\begin{remark} 
Every time-consistent family $M_{st}$ 
of $\mcal{F}_s$-homogeneous, sublinear operators such that (\ref{eqwtcM2})
is satisfied is weak time-consistent. 
Note that, if $M_{st}(1)=1$, then (\ref{eqwtcM2}) is trivially satisfied.
Similar arguments work for the superlinear case.
\end{remark}

\begin{thm}\label{thmP}
Let $M_{st}$, $\st$, be a weak time-consistent family of regular (or weak* continuous), $\mcal{F}_s$-homogeneous, sublinear operators $M_{st}: L^+_{\infty}({\cal F}_t )\rightarrow L^+_{\infty}({\cal F}_s)$;
let $m_{st}$, $\st$, be a weak time-consistent family of $\mcal{F}_s$-homogeneous, superlinear operators $m_{st}: L^+_{\infty}({\cal F}_t )\rightarrow L^+_{\infty}({\cal F}_s)$. 
Assume  that $m_{0T}(X)>0$ $P-a.s.$ for every $X> 0$ and that, for every $X \in L^+_{\infty}({\cal F}_t)$, for every sequence $s_n$ decreasing to $s$, we have
\begin{equation}
M_{st}(X) \geq \liminf M_{s_nt}(X);\;\;\;\;\;m_{st}(X) \leq \limsup m_{s_nt}(X)
\label{wrc}
\end{equation} 
Let 
\begin{equation}
\label{num}
x_{st}(X),\,X\in L_t,\,0\leq s\leq t\leq T,
\end{equation}
be a time-consistent and right-continuous family of price operators.
Suppose that the following  sandwich condition is satisfied:
\begin{equation}\label{sand1}
m_{st}(Z)+x_{st}(X) \leq M_{st}(Y)
\end{equation}
for all $X \in L_t$ and $Y,Z\in L^+_\infty(\F_t)$ such that $ Z+X\leq Y$.

Then there exists a probability measure $P^0\sim P$:
\begin{equation*}
P^0(A)=\int_A f(\omega)P(d\omega),\quad A\in\F,
\end{equation*}
with \(f\in L_1^+(\F)\) and $E[f|\F_0]=1$ such that
\begin{equation}
m_{st}(X)\leq E_{P_0}[X|\F_s] \leq M_{st}(X), \quad X \in L_{\infty}^+({\cal F}_t).
\label{bdens}
\end{equation}
and allowing the representation:
\[ 
x_{st}(X)=E_{P_0}[X|\F_s]= E\Big[X\frac{f}{E[f|\F_s]}|\F_s\Big],\quad X\in L_t,
\]
for all price operators. 
\end{thm}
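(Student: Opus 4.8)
The plan is to reduce the continuous-time statement to the single-period sandwich extension theorem (Theorem~\ref{thm3.1}, in its topological form Proposition~\ref{rq3.3} when $M_{st}$ is only weak* continuous) applied at the pair of $\sigma$-algebras $\mathcal{F}_0 \subseteq \mathcal{F}_T$, and then to recover the intermediate times by time-consistency. First I would observe, via the Remark preceding Definition~\ref{defwtc}, that $x_{st}$ is the restriction of $x_{sT}$ to $L_t$; in particular $x_{0T}\colon L_T \to L_0$ is a monotone linear operator defined on the linear subspace $L_T \subseteq L_\infty(\mathcal{F}_T)$. Next I would check that the hypothesis \eqref{sand1} with $s=0$, $t=T$ gives exactly the sandwich condition \eqref{san} for the triple $(m_{0T}, x_{0T}, M_{0T})$ on $L = L_T \subseteq L_\infty(\mathcal{F}_T)$, with $M_{0T}$ regular (or weak* continuous) and $\mathcal{F}_0$-homogeneous, and $m_{0T}$ superlinear and $\mathcal{F}_0$-homogeneous. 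Applying Theorem~\ref{thm3.1} (resp.\ Proposition~\ref{rq3.3}) yields a monotone linear extension $\bar x\colon L_\infty(\mathcal{F}_T) \to L_\infty(\mathcal{F}_0)$, continuous from above, satisfying $m_{0T}(X) \le \bar x(X) \le M_{0T}(X)$ for all $X \in L^+_\infty(\mathcal{F}_T)$, in particular $\bar x(1)=1$ since $m_{0T}(1)\le \bar x(1)\le M_{0T}(1)$ and both bounds on $1$ can be normalized using $x_{0T}(1)=1$; more carefully, $\bar x(1)=x_{0T}(1)=1$ because $1\in L_T$ and $\bar x$ extends $x_{0T}$. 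Then Theorem~\ref{thm1} (applicable since $\bar x$ is $\mathcal{F}_0$-homogeneous — here I would invoke Lemma~\ref{x1}, as weak $\mathcal{F}_0$-homogeneity plus continuity from above gives full $\mathcal{F}_0$-homogeneity — monotone, linear, continuous from above, with $\bar x(1)=1\ge c$) produces $f \in L^+_1(\mathcal{F}_T)$, $E[f|\mathcal{F}_0]=1$, with $\bar x(X) = E[fX|\mathcal{F}_0] = E_{P^0}[X|\mathcal{F}_0]$ where $P^0(A)=\int_A f\,dP$.

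The equivalence $P^0 \sim P$, rather than merely $P^0 \ll P$, is where the strict positivity hypothesis $m_{0T}(X)>0$ $P$-a.s. for $X>0$ enters. I would argue: if $f=0$ on a set $B$ of positive measure, take $X = 1_B > 0$; then $E_{P^0}[1_B|\mathcal{F}_0] = \bar x(1_B) \ge m_{0T}(1_B) > 0$ $P$-a.s., while $E_{P^0}[1_B|\mathcal{F}_0] = E[f 1_B|\mathcal{F}_0] = 0$ since $f 1_B = 0$, a contradiction. Hence $f>0$ $P$-a.s., i.e.\ $P^0 \sim P$.

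It remains to propagate the representation and the bounds to all $s \le t$. For the representation: given $X \in L_t$, time-consistency gives $x_{st}(X) = x_{sT}(X)$, and the tower property of conditional expectation under $P^0$ gives $E_{P^0}[X|\mathcal{F}_s] = E_{P^0}\big[E_{P^0}[X|\mathcal{F}_T]\,\big|\,\mathcal{F}_s\big]$; but here I would need to relate $x_{sT}$ for general $s$ to $\bar x = x_{0T}$. The clean way is to note that once we know $x_{0T}(\cdot) = E_{P^0}[\cdot\,|\mathcal{F}_0]$ on all of $L_\infty(\mathcal{F}_T)$, the standard argument (used in the proof of Theorem~\ref{thm1} via $\mathcal{F}_s$-homogeneity, or directly: for $A\in\mathcal{F}_s$, $x_{0T}(1_A X) = 1_A x_{0T}(X)$ by weak $\mathcal{F}_0$-homogeneity only if $A\in\mathcal{F}_0$) does not immediately give $x_{sT}$. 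Instead, I would run the extension argument simultaneously, or more economically: define $P^0$ from the $\mathcal{F}_0$-level as above, and for each $s$ set $\hat x_{sT}(X) := E_{P^0}[X|\mathcal{F}_s]$; one checks $\hat x_{sT}$ is a monotone $\mathcal{F}_s$-homogeneous linear operator continuous from above on $L_\infty(\mathcal{F}_T)$ agreeing with $x_{sT}$ on $L_T$ — the agreement on $L_T$ follows because both $x_{sT}$ and $\hat x_{sT}$, restricted to $L_T$, are linked to $x_{0T}$ by time-consistency: $x_{0T}(X)=x_{0s}(x_{sT}(X))$ and $E_{P^0}[X|\mathcal{F}_0]=E_{P^0}[\hat x_{sT}(X)|\mathcal{F}_0]$, and injectivity considerations via strict monotonicity pin down $x_{sT}=\hat x_{sT}$ on $L_T$. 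The bounds \eqref{bdens} then follow from weak time-consistency: $E_{P^0}[X|\mathcal{F}_s] = E_{P^0}\big[E_{P^0}[X|\mathcal{F}_T]|\mathcal{F}_s\big]$, combined with $m_{0T}(X)\le E_{P^0}[X|\mathcal{F}_0]\le M_{0T}(X)$ disaggregated via \eqref{wtcM}–\eqref{wtcm} and the weak-right-continuity \eqref{wrc}; concretely, for $X\in L^+_\infty(\mathcal{F}_t)$ and $s\le t$ one shows $E_{P^0}[X|\mathcal{F}_s]\le M_{st}(X)$ by noting $M_{0T}(X)\ge M_{0s}(M_{st}(X))\ge\cdots$ and using that $E_{P^0}[\cdot|\mathcal{F}_0]$ dominates $E_{P^0}[\cdot|\mathcal{F}_s]$ after a further conditioning, together with the fact that the inequality at level $0$ holds for \emph{all} $X\in L^+_\infty(\mathcal{F}_t)\subseteq L^+_\infty(\mathcal{F}_T)$ and $M_{0t}$ already dominates $E_{P^0}[\cdot|\mathcal{F}_0]$.

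The main obstacle I anticipate is precisely this last disaggregation step: getting the \emph{pointwise-in-$(s,t)$} sandwich bounds \eqref{bdens} from the single bound at $(0,T)$ is not automatic, because a priori the extension theorem only controls $E_{P^0}[X|\mathcal{F}_0]$. The device that makes it work is weak time-consistency \eqref{wtcM}, \eqref{wtcm} together with the right-continuity/liminf conditions \eqref{wrc}, \eqref{eqwtcM2}, \eqref{wtcm2}: these let one "localize" the estimate by applying the $(0,T)$-bound not to $X$ directly but to suitable truncations $1_A X$ with $A\in\mathcal{F}_s$ and taking conditional expectations, exploiting $\mathcal{F}_s$-homogeneity of $M_{st}$ and $m_{st}$ to pull $1_A$ out, then letting $A$ exhaust $\mathcal{F}_s$. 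I would expect the bound $E_{P^0}[X|\mathcal{F}_s]\le M_{st}(X)$ to come out by testing against $1_A$, $A\in\mathcal{F}_s$: $E[1_A E_{P^0}[X|\mathcal{F}_s]] = E_{P^0}[1_A X] = E[1_A E_{P^0}[1_AX|\mathcal{F}_0]]\le E[1_A M_{0t}(1_A X)]\le E[1_A M_{0s}(M_{st}(1_A X))]= E[1_A M_{0s}(1_A M_{st}(X))]\le E[1_A M_{st}(X)]$, using $\mathcal{F}_s$-homogeneity, weak time-consistency, monotonicity and that $M_{0s}$ composed with conditioning cannot increase the $\mathcal{F}_s$-measurable quantity $1_AM_{st}(X)$ — the superlinear/sublinear version of the tower inequality — and the lower bound for $m_{st}$ follows symmetrically; since $A\in\mathcal{F}_s$ is arbitrary, \eqref{bdens} holds $P$-a.s.
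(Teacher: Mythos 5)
Your reduction to a single sandwich extension of $x_{0T}$ at the pair $(\mathcal{F}_0,\mathcal{F}_T)$ does not suffice, and the two devices you invoke to propagate the conclusion to intermediate times both fail. First, the propagation of the representation: time-consistency $x_{0T}(X)=x_{0s}(x_{sT}(X))$ is only available when $x_{sT}(X)\in L_s$, which is not guaranteed; and even granting it, from $E_{P^0}[x_{sT}(X)\,|\,\mathcal{F}_0]=E_{P^0}[X\,|\,\mathcal{F}_0]$ you cannot conclude $x_{sT}(X)=E_{P^0}[X\,|\,\mathcal{F}_s]$ — strict monotonicity of $x_{0s}$ (or of $E_{P^0}[\cdot\,|\,\mathcal{F}_0]$) is not injectivity, since two non-comparable $\mathcal{F}_s$-measurable variables can have the same $\mathcal{F}_0$-conditional expectation. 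Indeed a measure built only from the $(0,T)$-extension has no reason to price correctly at intermediate dates. Second, the propagation of the bounds \eqref{bdens}: your test-function chain uses an inequality of the form $E[1_A M_{0s}(1_A M_{st}(X))]\leq E[1_A M_{st}(X)]$, which has no justification and is generally false ($M_{0s}$ is a majorant, typically a supremum of conditional expectations, so it increases rather than decreases such quantities); it also mixes expectations under $P$ and $P^0$, and it presupposes a bound $E_{P^0}[\cdot\,|\,\mathcal{F}_0]\leq M_{0t}(\cdot)$ at the pair $(0,t)$, which does not follow from the $(0,T)$ bound under the stated hypotheses (weak time-consistency gives $M_{0s}(M_{st}(X))\leq M_{0t}(X)$, not $M_{0T}(X)\leq M_{0t}(X)$ for $\mathcal{F}_t$-measurable $X$).

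The paper's proof is structured precisely to avoid these gaps: for each finite partition $\mathcal{T}$ it applies Theorem \ref{thm3.1} (or Proposition \ref{rq3.3}) and Theorem \ref{thm1} to every consecutive pair $(s_{k-1},s_k)$, obtaining one-step densities $f_{s_{k-1}s_k}$ with $E[f_{s_{k-1}s_k}\,|\,\mathcal{F}_{s_{k-1}}]=1$, and then takes the product $f=\prod_k f_{s_{k-1}s_k}$; the product structure is exactly what makes the induced conditional expectations agree with $x$ at all partition points (via time-consistency of $x$) and satisfy the sandwich bounds at all partition pairs (via time-consistency of $\hat x$ and weak time-consistency of $m,M$). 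Passing from partitions to all of $[0,T]$ then requires showing the sets $\mathbb{P}^{(\mathcal{T}_n)}$ are weak* compact — which, since $L_\infty$ with the weak* topology is not metrizable, is done with filters and the finite intersection property — followed by the right-continuity \eqref{rc} of the price operators and conditions \eqref{wrc}, \eqref{eqwtcM2}, \eqref{wtcm2} to identify the limit. None of this machinery appears in your proposal, and the continuum of trading dates cannot be handled without it (your argument never uses \eqref{rc} at all). The one part of your sketch that is sound is the derivation of $P^0\sim P$ from $m_{0T}(X)>0$ for $X>0$, which is indeed how that hypothesis is used.
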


Note that the last hypothesis on $M_{st}$ and $m_{st}$ (equation \ref{wrc}) is obviously satisfied if $M_{st}$ and $m_{st}$ are rightcontinuous in $s$.\\
The above theorem  appears in the same line as Theorem 4.1 in \cite{DE2008} where the study was carried out for operators in separable $L_p$-spaces with $1\leq p < \infty$, and for specific majorants and minorants. However we stress that the present result deals with weaker assumptions on the majorant and minorant operators. Moreover we remark  a crucial difference: the dual of $L_{\infty}$ endowed with the weak* topology is not metrizable. Then, to deal with the compactness features that follow, we call on the concept of \emph{filters}, see e.g. \cite{B90}. The most important notions used are summarized in the Appendix.

\begin{proof}
We have to prove that the set of probability measures
\begin{multline}\label{4addstar}
\pset:=\Big\{P^0\vert\;\frac{dP^0}{dP}=f, \; E[f\vert \F_0]=1 ,\;\forall s, t \in [0,T],\;\\\forall X \in L^+_{\infty}({{\cal F}_t}), m_{st}(X)\leq  E_{P_0}[X|\F_s] \leq M_{st}(X);\; \; \\ \forall X \in L_t, x_{st}(X)=E_{P_0}[X|\F_s]\; \Big \},
\end{multline}
is non-empty if \eqref{sand1} holds. 
We consider first the discrete time case
\begin{multline}\label{4addstar-discrete}
\pset^{(\T)}:=\Big\{P^0\vert\;\frac{dP^0}{dP}=f, \; E[f\vert \F_0]=1,\;\; \forall s,t \in \T,\;s \leq t\;,\\\forall X \in L^+_{\infty}({{\cal F}_t}), m_{st}(X)\leq  E_{P_0}[X|\F_s] \leq M_{st}(X);\; \; \\ \forall X \in L_t, x_{st}(X)=E_{P_0}[X|\F_s]\; \Big \},
\end{multline}
where \(\T\) is some partition of \([0,T]\) of the form
\begin{equation}
\T=\{s_0,s_1,\dots,s_K\}\text{, with  } 0=s_0< s_1<\dots<s_K= T.
\label{part}
\end{equation}
Further, we consider a sequence \(\{\T_n\}_{n=1}^\infty\) of increasingly refined partitions, such that \(\T_n\subset\T_{n+1}\) and mesh\((\T_n)\ra 0\) as \(n\ra\infty\). Clearly \(\pset^{(\T_{n+1})}\subset\pset^{(\T_n)}\). It is then sufficient to prove that
\renewcommand\theenumi{\Alph{enumi}}
\begin{enumerate}
\item
\(\pset^{(\T)}\) is non-empty for any finite partition \(\T\),
\label{discnone}
\item
the infinite intersection \(\bigcap_{n=1}^\infty\pset^{(\T_n)} \) is non-empty, and
\label{infintersec}
\item
any \(P^0\in \bigcap_{n=1}^\infty\pset^{(\T_n)}\) is also in \(\pset.\)
\label{intersec}
\end{enumerate}
To prove A, first of all note that by Theorem \ref{thm3.1} (or  Proposition \ref{rq3.3}), the sandwich condition \eqref{sand1} ensures that for every \(s\leq t\) the price operators \eqref{num} admit extensions $\tilde x_{st}$ on the whole $L_\infty(\F_t)$
and Theorem \ref{thm1} guarantees that there exists $f_{st}\in L_1^+(\mcal{F}_t)$: $E[ f_{st} \vert \F_s] =1$ such that
\begin{equation}\label{4add4}
\tilde x_{st}(X)=E\Big[Xf_{st}\big|\F_s\Big],\quad X\in L_\infty(\F_t).
\end{equation}

However, though the family of operators \eqref{num} is time-consistent, we cannot say, in general, that the extensions \eqref{4add4} are also time-consistent.
Then we proceed as follows.
Let us consider the partition points $\T$ and define
\begin{equation}\label{4add5}
f:=\prod_{k=1}^K f_{s_{k-1}s_k}.
\end{equation}
Define ${\hat x}_{st}(X):=E \Big[X\frac{f}{E[f|\F_{s}]}\big | \F_{s}\Big]$ $=E_{P_0}\big[X|\F_{s}\big]$, $X\in L_\infty(\F_t)$, where 
\begin{equation}
P_0(A)=\int_A f(\omega)P(d\omega),\quad A\in\F_T.
\label{eqQ0}
\end{equation}

 Then the family $\hat x_{st}$, $s,t \in [0,T]$  is time-consistent. Moreover for every  $X\in L_{s_k}$,  
\begin{equation*}
x_{s_{k-1}s_k}(X)=E\Big[X f_{s_{k-1}s_k}\big | \F_{s_{k-1}}\Big]=E\Big[X\frac{f}{E[f|\F_{s_{k-1}}]}\big | \F_{s_{k-1}}\Big]= \hat x_{s_{k-1}s_k}(X)
\end{equation*}
By iteration on $j-i$, it follows that for all $i \leq j$, 
 for every $X \in L_{s_j}$,
$$x_{s_{i}s_j}(X)=\hat x_{s_{i}s_j}(X)$$
  We can thus conclude that the probability measure $P_0$ defined by (\ref{eqQ0})
allows the representation
\[x_{st}(X)=x_{sT}(X)=E\Big[X\frac{f}{E[f|\F_s]}\big|\F_s\Big]=\hat x_{sT}(X) =\hat x_{st}(X) ,\quad X\in L_t,\]
for every $s \in\T$ and $t\in[s,T]$. Moreover, from Theorem \ref{thm3.1} or  Proposition \ref{rq3.3}, it follows from time consistency of $\hat x_{st}$ and weak time consistency of $m_{st}$ and $M_{st}$ that for every $s,t \in \T$,
$$m_{st}(X) \leq {\hat x}_{st}(X) \leq M_{st}(X), \quad  X \in L^+_{\infty}({\cal F}_t)$$ 

Thus \(\pset^\T\) is non-empty and \ref{discnone} holds.

\vspace{2mm}
The set \(\bigcap_{n=1}^\infty\pset^{(\T_n)}\) is non empty if the  corresponding sets  $ \pset^{(\T_n)}$ are \emph{weak\(^*\) compact}.
Here we are applying the \emph{finite intersection property}.

Then we have to prove that, for any partition (\T)  the set  \(\pset^{(\T)}\),   is weak\(^*\) compact.
As announced we use the concept of filters, see Appendix.

Denote $B^+$ the non negative part of the unit ball of the dual of $L_{\infty}({\cal F}_T)$. 
Note that 
\begin{multline*}
 \pset^{\T}:= \Big\{L \in B^+,\; L(1)=1, \forall s \leq t \in \T\; \forall A \in {\cal F}_s \\\forall X \in L^+_{\infty}({{\cal F}_t}), L(m_{st}(X)1_A)\leq  L(X1_A) \leq L(M_{st}(X)1_A);\; \; \\ \forall X \in L_t, L(x_{st}(X)1_A)=L(X1_A)\; \Big \},
\end{multline*}

Indeed the majoration $L \leq M_{0T}$ (which is a special case of the first inequality) implies from Proposition \ref{prop1} that $L$  is continuous from above i. e. that there is a probability measure $P_0$ such that $\forall X, \;L(X)=E_{P_0}[X]$. Furthermore, since $L$ belongs to the dual of $L_{\infty}({\cal F}_T)$, we conclude that $P_0 \ll P$. Then the second condition tells that $x_{st}(X)=E_{P_0}[X|{\cal F}_s]$ for all $X$ in $L_t$.\\
Note that
 $$\pset^{\T} = \; \pset^{\T}_1 \cap  \pset^{\T}_2$$
where

\begin{multline*}
\pset^{\T}_1 :=\Big\{L \in B^+, \forall s \leq t \in \T\; \forall X \in L^+_{\infty}({{\cal F}_t}), \forall A \in {\cal F}_s,\\L(m_{st}(X)1_A)\leq L(X1_A) \leq L(M_{st}(X)1_A)\;\Big \},
\end{multline*}
\begin{eqnarray*}
{\pset^{\T}_2}=\Big\{L \in B^+,\; L(1)=1,\;\forall s \leq t \in\T,\;\\
L(x_{st}(X)1_A)=L(X1_A) \;\; \forall A \in {\cal F}_s,\;\forall X\in L_t\Big\}
\end{eqnarray*}
We prove separately that both $\pset^{\T}_1 $ and ${\pset^{\T}_2}$  are weak* compact.

First we recall that $B^+$ is 
weak* compact. 

As the weak* topology is not metrizable, in order to prove that $\pset^{\T}_1 $ is a compact we show that every filter on $\pset^{\T}_1 $ has an adherent point.

Let $\U$ be a filter in $\pset^{\T}_1 $, then it is a base of filter in $B^+$.
As $B^+$ is compact, $\U$ has an adherent point here denoted $L$ in $B^+$.
Hence to prove compactness of $\pset^{\T}_1 $ we only need to verify that $L \in \pset^{\T}_1 $.

Denote $\V(L)$ the filter of the neighbourhoods $V(L)$ of $L$. In our context the neighbourhoods have the following form:
\[V(L)=
V_{\varepsilon, X_1,...,X_K}(L)
= \Big\{L'\in B^+:\; \vert L(X_k) - L'(X_k)\vert < \varepsilon, \:k=1,...,K\Big\}
\]
for $\varepsilon >0, K \in \mathbb{N}, X_1,...,X_K\in L_\infty(\F_T)$.
Recall that, being $L$ an adherent point, we have that $V(L) \cap U \ne \emptyset$ for every neighbourhood $V(L)$ and every $U\in \U$.
Let us consider $X\in L^+_\infty(F_t)$, $A\in \F_s$ and $\varepsilon >0$.
Let $L'\in V_{\varepsilon, X1_A,m_{st}(X)1_A,M_{st}(X)1_A}(L) \cap U\subseteq V_{\varepsilon, X1_A,m_{st}(X)1_A,M_{st}(X)1_A}(L)  \cap \pset^{\T}_1 $. Then, from
$$L'(m_{st}(X)1_A)\leq L'(X1_A) \leq L'(M_{st}(X)1_A)$$
and the inequalities
$$|L(m_{st}(X)1_A)-L'(m_{st}(X)1_A)| \leq \epsilon$$
$$|L(X1_A)- L'(X1_A)|\leq \epsilon$$
$$|L(M_{st}(X)1_A)-L'(M_{st}(X)1_A)| \leq \epsilon$$
letting $\varepsilon \rightarrow 0$, we conclude that $L \in \pset^{\T}_1 $.
 Hence $\pset^{\T}_1 $ is weak* compact.

In the case of ${\pset^{\T}_2}$, we consider $\U$ filter on ${\pset^{\T}_2}$. Since ${\pset^{\T}_2} \subseteq B^+$, we proceed with similar arguments.
To conclude that the adherent point $L \in B^+$ belongs to ${\pset^{\T}_2}$, we consider in particular the neighbourhoods $V(L)$ of  $L$ of type
\begin{multline*}
V(L)=V_{\varepsilon,1, 1_AX, 1_Ax_{st}(X)} (L) =
\Big\{L'\in B^+:\; |L'(1)-L(1)|\leq \epsilon,\vert \\L'(1_A  X) - L(1_A  X)\vert < \varepsilon,
 \vert L'(1_A  x_{st}(X)) - L(1_A x_{st}(X))\vert < \varepsilon\Big\}
\end{multline*}
for any $s \leq t \in \T$,  any $A\in \F_s$, any $X\in L_t\subseteq L_\infty(\F_T)$, and any $\varepsilon > 0$.
In this case an element $L' \in V_{\varepsilon,1, 1_AX, 1_Ax_{st}(X)} (L) \cap U \subseteq  V_{\varepsilon, 1,1_AX, 1_Ax_{st}(X)} (L) \cap {\pset^{\T}_2} $ satisfies
\[
L'(1_A  X) = L'(1_A  x_{st}(X)).
\]
Thus we have
\[
 \vert L(1_A  X) - L(1_A  x_{st}(X))\vert < 2 \varepsilon.
\]
\[
|L(1) -1| \leq 2 \varepsilon.
\]
Since the above estimate holds for every $A \in \F_s$, letting $\varepsilon \rightarrow 0$, we conclude that
the set ${\pset^{\T}_2}$ is  weak* compact.
This concludes the proof of B.

\vspace{2mm}
Assume that \(P_0\in \bigcap_{n=1}^\infty\pset^{(\T_n}).\)
As the partitions form a dense subset of \([0,T]\), then for any \(s\in [0,T]\) there is a sequence \(\{s_n\in\T_n\}_{n=1}^\infty\) such that \(s_n\downarrow s\) as \(n\ra\infty\). By the right-continuity of the filtration, and the right-continuity \eqref{rc} of the price operators we have
\[x_{st}(X)=\lim_{n\ra\infty} x_{s_nt}(X)=\lim_{n\ra\infty} E_{P_0}[X|\F_{s_n}]=E_{P_0}[X|\F_s]\quad X\in L_t\]
for any $s \in [0,T]$ and $t \in \bigcup \T_n$.
As $x_{st}(x)=x_{sT}(X)$ for every $X \in L_t$, the above equality is satisfied for every 
$s,t \in [0,T]$.
By the right-continuity of the filtration and the hypothesis (\ref{wrc}) on $M_{st}$ and $m_{st}$,   we also have  
for every  $s \in [0,T]$ and $t \in \bigcup \T_n$, 
$$m_{st}(X)\leq E_{P_0}[X|\F_s] \leq M_{st}(X)\;\;\forall X \in L_{\infty}(X)^+$$
Considering a sequence  \(\{t_n\in\T_n\}_{n=1}^\infty\) such that \(t_n\downarrow t\) as \(n\ra\infty\) as $M_{st}(X)=\lim M_{st_n}(X)$ and $m_{st}(X)=\lim m_{st_n}(X)$, see definition \ref{defwtc}, we conclude that  \(P_0\in\pset\) and \ref{intersec} holds.
\end{proof}

%%%%%%%%%%%%%%%%%%%%%%%%%%%
\section{Applications to price systems}\label{applications}
With this section we study several applications of the previous result. Our focus is in the characterization of price systems in connection with various forms of restrictions on prices or on the pricing measures.

\subsection{Pricing measures with bounds on density}\label{density}
First of all we present the case in which some restriction on the pricing measures is given in the form of lower and upper bounds for the martingale measure densities.
This criterion is motivated by the observation that some form of control on the so-called tail events should be maintained when shifting from the physical measure $P$, where statistical analysis is performed, to some pricing measure $P_0$.
This result is in line with \cite{ADR2005} and \cite{DE2008} where this application was studied for price operators in an $L_p$-setting. The first paper deals with the one-period market only, the second one extends this result to the dynamic framework. In \cite{DE2008}, some specific examples derived from insurance pricing can also be found.

\begin{proposition}
Let $m_{st},M_{st}\in L_1(\F_t)$, $0 \leq s \leq t \leq T$, such that
\begin{equation*}\label{4add1}
0<m_{st}\leq M_{st} \quad P-a.s.
\end{equation*}
and $m_{rs} m_{st}=m_{rt}$, $M_{rs}M_{st}=M_{rt}$, for any $r \leq s\leq t$. 
Assume that 
$m_{tt'}\rightarrow m_{tt}=1$ and $M_{tt'}\rightarrow M_{tt}=1$, for $t'\downarrow t$.
Then define 
\begin{equation}\label{5M}
\begin{split}
M_{st}(X)&:=E(M_{st}X|{\cal F}_s), \quad X\in L_\infty^+(\mcal{F}_t),\\
m_{st}(X)&:=E(m_{st}X|\F_s), \quad X\in L_\infty^+(\mcal{F}_t).
\end{split}
\end{equation}
Let 
$x_{st}(X)$, $X\in L_t$, $0\leq s\leq t\leq T$,
be price opertaors as in Theorem \eqref{thmP} satisfying the sandwich condition \eqref{sand1}.
Then there exists a probability measure $P^0\sim P$
allowing the representation:
\begin{equation*}
 x_{st}(X)=E_{P_0}[X|\F_s]= E\big[X\frac{f}{E[f|\F_s]}|\F_s\big],\quad \forall X\in L_t
\end{equation*}
with \(f\in L_1^+(\F)\) and $E[f|\F_0]=1$ such that
\begin{equation*}
m_{st}\leq\frac {E(f|{\cal F}_t)}{E(f|\F_s)}\leq M_{st}
\end{equation*}
\end{proposition}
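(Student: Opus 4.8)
The plan is to cast this proposition as a direct corollary of Theorem~\ref{thmP}. First I would verify that the operators defined in~\eqref{5M} satisfy all the hypotheses required there. For the sublinear family $M_{st}(X)=E(M_{st}X|\F_s)$: it is linear (hence in particular sublinear and superlinear), monotone since $M_{st}>0$ $P$-a.s., and $\F_s$-homogeneous. Its regularity follows from conditional dominated convergence: if $X_n\downarrow 0$ in $L^+_\infty(\F_t)$ then $M_{st}X_n\downarrow 0$ and $M_{st}X_n\leq M_{st}\|X_1\|_\infty\in L_1(\F_t)$, so $E(M_{st}X_n|\F_s)\to 0$ $P$-a.s. (Alternatively one invokes weak$^*$ continuity and Lemma~\ref{lemmawcreg}.) The analogous statements hold for $m_{st}(X)=E(m_{st}X|\F_s)$, which is even linear so trivially superlinear.

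Next I would check weak time-consistency. For $r\leq s\leq t$, using the tower property and $M_{rs}M_{st}=M_{rt}$,
\[
M_{rs}(M_{st}(X))=E\big(M_{rs}E(M_{st}X|\F_s)\,\big|\,\F_r\big)=E\big(E(M_{rs}M_{st}X|\F_s)\,\big|\,\F_r\big)=E(M_{rt}X|\F_r)=M_{rt}(X),
\]
so~\eqref{wtcM} holds with equality; similarly $m_{rs}(m_{st}(X))=m_{rt}(X)$, giving~\eqref{wtcm}. The right-continuity conditions~\eqref{eqwtcM2} and~\eqref{wtcm2} follow from the assumption $M_{tt'}\to 1$, $m_{tt'}\to 1$ as $t'\downarrow t$ together with conditional dominated convergence, and~\eqref{wrc} follows the same way (or from the remark after Theorem~\ref{thmP}, since these families are genuinely time-consistent and $M_{st}(1)$ need not equal $1$, but one checks~\eqref{wrc} directly). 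Finally, $m_{0T}(X)=E(m_{0T}X|\F_0)>0$ $P$-a.s.\ whenever $X>0$, because $m_{0T}>0$ $P$-a.s.

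With all hypotheses of Theorem~\ref{thmP} verified and the sandwich condition~\eqref{sand1} assumed, that theorem yields a probability measure $P^0\sim P$ with density $f\in L^+_1(\F)$, $E[f|\F_0]=1$, such that $x_{st}(X)=E_{P_0}[X|\F_s]=E[Xf/E[f|\F_s]|\F_s]$ for $X\in L_t$ and
\[
E(m_{st}X|\F_s)\leq E_{P_0}[X|\F_s]\leq E(M_{st}X|\F_s),\qquad X\in L^+_\infty(\F_t).
\]
The last step is to translate these operator inequalities into the pointwise density bound $m_{st}\leq E(f|\F_t)/E(f|\F_s)\leq M_{st}$. For this I would write $E_{P_0}[X|\F_s]=E\big(X\,E(f|\F_t)/E(f|\F_s)\,\big|\,\F_s\big)$ using the tower property (valid since $X$ is $\F_t$-measurable), so the bound reads $E\big(X(m_{st}-E(f|\F_t)/E(f|\F_s))\,\big|\,\F_s\big)\leq 0$ for all $X\in L^+_\infty(\F_t)$, and likewise for the upper bound. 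Testing against $X=1_C$ with $C\in\F_t$ of the form $C=\{E(f|\F_t)/E(f|\F_s)<m_{st}\}\cap D$, $D\in\F_s$, forces $P(C)=0$; the main (mild) obstacle here is the usual measurability bookkeeping to choose the right $\F_t$-measurable test sets and conclude the inequality holds $P$-a.s.\ rather than merely in conditional expectation. The upper bound is symmetric. This completes the proof.
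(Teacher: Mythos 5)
Your proposal is correct and follows essentially the same route as the paper: verify that the operators in \eqref{5M} are linear (hence sub/superlinear), $\F_s$-homogeneous, regular, time-consistent and right-continuous, and then invoke Theorem \ref{thmP}. The only difference is that you spell out the final translation of the operator sandwich \eqref{bdens} into the pointwise density bound $m_{st}\leq E(f|\F_t)/E(f|\F_s)\leq M_{st}$ via indicator test functions, a step the paper leaves implicit, and your argument for it is sound.
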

\begin{proof} 
The operators $M_{st}$ and $m_{st}$ are linear, $\mcal{F}_s$-homogeneous, and regular. Moreover, the families $m_{st}$, $M_{st}$, $0\leq s\leq t\leq T$ are time-consistent and right-continuous. Then Theorem \ref{thmP} gives the result. 
\end{proof}

\begin{remark}
 A particular example of $m_{st}$,$M_{st} \in L_1({\cal F}_s)$, $0 \leq s \leq t \leq T$ satisfying the hypothesis of the theorem is given by  
\begin{equation}\label{4add2}
\begin{split}
M_{st}&:=\big(E[M|\F_0]\big)^{\frac{t-s}T}\frac{E[M|\F_t]}{E[M|\F_s]}\\
m_{st}&:=\big(E[m|\F_0]\big)^{\frac{t-s}T}\frac{E[m|\F_t]}{E[m|\F_s]}
\end{split}
\end{equation}
for $m,M>0$ $P-a.s.$ and $\frac{m}{E[m|\mcal{F}_s]}\leq \frac{M}{E[M|\mcal{F}_s]} \in L_1^+(\mcal{F}_T)$. In this case $m_{0T}=m\;\; M_{0T}=M$.
\end{remark}

%%%%%%%%%%%%%%%
\subsection{Price systems compatible with bid-ask dynamics}\label{bid-ask}
Delbaen has introduced in \cite{D} the notion of m-stability for a set of probability measures all absolutely continuous with respect to a given probability measure $P$. A set ${\cal Q}$ is m-stable if for all probability measures $Q_1 \ll P$ and $Q_2 \sim P$ in ${\cal Q}$ and for every  stopping time $\tau$ the probability measure  $Q$ such that $\frac{dQ}{dP}= (\frac{dQ_1}{dP})_{\tau}\frac{dQ_2}{dP}({\frac{dQ_2}{dP})^{-1}_{\tau}}$ belongs to ${\cal Q}$. We adopt the notation $(\frac{dQ}{dP})_{\tau}=E_P(\frac{dQ}{dP}|{\cal F}_\tau)$. ${\cal Q}$ also contains every probability measure whose Radon Nikodym derivative belongs to ${\cal F}_0$. It is proved in \cite{D} that every m-stable set ${\cal Q}$ defines a time consistent right continuous family of homogeneous superlinear operators : 
$M_{st}(X)=\sup_{Q \in \cal Q}E_Q(X|{\cal F}_s)$.\\
In the case of a non complete financial market, admitting no arbitrage, the set ${\cal M}$ of equivalent martingale measures for a family of reference assets is a m-stable set of equivalent probability measures (see \cite{D}). This motivates the following application:
\begin{proposition}
Let ${\cal Q}_1$ and ${\cal Q}_2$ be   m-stable subsets of the set ${\cal M}$ of equivalent martingale measures for the choosen reference assets. 
Let 
\begin{equation}
\label{baMm}
\begin{split}
M_{st}(X)&=\esssup_{Q \in{\cal Q}_1}E_Q[X |{\cal F}_s]\\
m_{st}(x)&=\essinf_{Q \in{\cal Q}_2}E_Q[X |{\cal F}_s]
\end{split}
\end{equation}
Assume  that $m_{0T}(X)>0$ $P-a.s.$, for every $X>0$.
 Every time-consistent and right-continuous family of price operators satisfying the sandwich condition can be extended into a  time-consistent and right-continuous family of price operator
$x_{st}$ such that $x_{st}(X)=E_{P_0}[X|{\cal F}_s]$, $X \in L_{\infty}({\cal F}_t)$, where $P_0$ is an equivalent martingale measure  satisfying 
\begin{equation}
\label{star}
\essinf _{Q \in{\cal Q}_2}E_Q[X |{\cal F}_s] \leq E_{P_0}[X|{\cal F}_s] \leq \esssup _{Q \in{\cal Q}_1}E_Q[X |{\cal F}_s], \quad X \in L_{\infty}({\cal F}_T).
\end{equation}
\label{corstable}
\end{proposition}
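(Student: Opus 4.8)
The plan is to recognize that this proposition is an almost immediate corollary of Theorem \ref{thmP}, once we verify that the majorant and minorant operators defined in \eqref{baMm} satisfy all the structural hypotheses required there. So the first step is to record that, for an $m$-stable set ${\cal Q}$ of probability measures all absolutely continuous with respect to $P$, the operator $X \mapsto \esssup_{Q\in{\cal Q}} E_Q[X|{\cal F}_s]$ is $\mcal{F}_s$-homogeneous and sublinear (and dually, the essinf over an $m$-stable set is $\mcal{F}_s$-homogeneous and superlinear): homogeneity and sublinearity of a supremum of conditional expectations are routine, and the fact that the essential supremum is attained along an upward-directed family is standard. Then, as recalled from \cite{D}, $m$-stability of ${\cal Q}_1$ and ${\cal Q}_2$ yields exactly the time-consistency $M_{rs}(M_{st}(X)) = M_{rt}(X)$ and $m_{rs}(m_{st}(X)) = m_{rt}(X)$ for $r\leq s\leq t$, together with the right-continuity in $s$ and the limit property \eqref{eqwtcM2}--\eqref{wtcm2} in $t$; by the Remark following Definition \ref{defwtc}, time-consistency plus these limit properties give weak time-consistency, and right-continuity in $s$ gives the last hypothesis \eqref{wrc}.

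The next step is the regularity (or weak\(^*\) continuity) of $M_{st}$. Here I would argue that for a decreasing sequence $X_n \downarrow 0$ in $L^+_\infty({\cal F}_t)$, dominated convergence gives $E_Q[X_n|{\cal F}_s]\to 0$ $P$-a.s. for each fixed $Q$, but since we take a supremum over ${\cal Q}_1$ we instead invoke the weak\(^*\) route: each $E_Q[X_n|{\cal F}_s]$ converges to $0$ and the family $\{E_Q[\,\cdot\,|{\cal F}_s]: Q\in{\cal Q}_1\}$, being bounded in $L_\infty({\cal F}_s)$ by $\|X_n\|_\infty$, yields $M_{st}(X_n)\to 0$; more cleanly, one shows $M_{st}$ is weak\(^*\) continuous on $L^+_\infty$, whence regular by Lemma \ref{lemmawcreg}. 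The positivity assumption $m_{0T}(X)>0$ $P$-a.s. for every $X>0$ is assumed directly in the statement, matching the corresponding hypothesis of Theorem \ref{thmP}.

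With all hypotheses of Theorem \ref{thmP} in place, applying it to the given time-consistent right-continuous family $x_{st}$ satisfying the sandwich condition \eqref{sand1} produces $P^0\sim P$ with density $f\in L_1^+({\cal F})$, $E[f|{\cal F}_0]=1$, representing every price operator as $x_{st}(X)=E_{P_0}[X|{\cal F}_s]$ and satisfying $m_{st}(X)\leq E_{P_0}[X|{\cal F}_s]\leq M_{st}(X)$ on $L^+_\infty({\cal F}_t)$, which is exactly \eqref{star} for non-negative $X$ and extends to all $X\in L_\infty({\cal F}_T)$ by writing $X=X^+-X^-$ and using linearity together with the fact that $\esssup$ of conditional expectations over ${\cal Q}_1\subseteq{\cal M}$ dominates $E_{P_0}[\,\cdot\,|{\cal F}_s]$ only when the latter is itself a martingale-measure conditional expectation. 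Finally, since $P_0\in{\cal M}$ — because $x_{st}$ prices the reference assets as martingales and $P_0$ represents $x_{st}$ — $P_0$ is an equivalent martingale measure, giving the last assertion. The main obstacle I anticipate is the regularity/weak\(^*\) continuity of the supremum operator $M_{st}$: unlike a single conditional expectation, an essential supremum over an infinite family need not interact well with monotone limits, so some care is needed — most likely exploiting that ${\cal Q}_1\subseteq{\cal M}$ and that $m$-stability makes the relevant family of densities upward-directed, so the essential supremum is a genuine increasing limit along a sequence, to which dominated convergence applies.
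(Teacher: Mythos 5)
Your proposal is correct and follows essentially the same route as the paper: the paper's proof is a one-liner observing that, by m-stability of ${\cal Q}_1$ and ${\cal Q}_2$ (via the results of Delbaen recalled just before the proposition), the operators \eqref{baMm} satisfy all the hypotheses of Theorem \ref{thmP}, which is then applied directly. Your additional verification of the individual hypotheses (homogeneity, sublinearity, weak time-consistency, right-continuity, regularity) just makes explicit what the paper delegates to the citation of \cite{D}.
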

\begin{proof} As ${\cal Q}_1$ and ${\cal Q}_2$  are   m-stable sets of probability measures all equivalent with $P$, $M_{st}$ and $m_{st}$ satisfie all the hypothesis of Theorem \ref{thmP}.
\end{proof}

Note that from the linearity of $E_{P_0}[X§\F_s]$, multiplying the inequality \eqref{star} by $-1$, we obtain that 
\[\begin{split}
- \tilde M_{st}(-X) = &\essinf_{Q \in \mcal{Q}_1\cap{\cal Q}_2}E_Q[X |{\cal F}_s]\leq E_{P_0}[X|{\cal F}_s] \\
& \quad\leq \esssup _{Q \in\mcal{Q}_1\cap{\cal Q}_2}E_Q[X |{\cal F}_s]= 
\tilde M_{st}(X),
\end{split} 
\]
for all $ X \in L_{\infty}({\cal F}_T)$.
The set $\mcal{Q}_1\cap{\cal Q}_2$ is also m-stable
For every $t$, the process $\tilde M_{st}(X)$, $0\leq s\leq T$, admits then a c\`adl\`ag version for every $X$ (see \cite{D}).
Thus it has an extension to stopping times.
The operator $\tilde M_{\sigma \tau}$, $0\leq \sigma\leq \tau \leq T $ is then a no-free-lunch sublinear time-consistent pricing procedure according to the definition in \cite{BN}.

In a summary Proposition \ref{corstable} tells that every linear price system defined on the subspace $L_t$ of marketed assets at time $t$, and compatible with the bid and ask dynamics associated to a no-free-lunch time consistent pricing procedure can be represented by an equivalent martingale measure itself compatible with the bid and ask dynamics.

%%%%%%%%%%%%%%%%%%%%%%%%%%%%%%%%%%%%%%%%%%%%%%%%%%
\section{Dynamic no  good-deal price systems}
Good-deal bounds were introduced simultaneously by Cochrane and Saa Requejo \cite{CSR2000} and Bernardo and Ledoit \cite{BL2000} as a way to restrict the choice of equivalent martingale measures in incomplete markets. 
The idea is to consider martingale measures that not only rule out arbitrage possibilities, but also deals that are ``too good to be true''. 
Following \cite{CSR2000} we consider the characterization of no good-deals based on a restriction of the Sharpe ratio. 
Cochrane and Saa Requejo \cite{CSR2000}, and then Bj{\"o}rk and Slinko \cite{BS2006} start from a specific model for the traded assets: diffusions in \cite{CSR2000} and more general processes including jumps in \cite{BS2006}. They define an upper good deal price  process restricting the set of equivalent martingale measures.  Their definition of this set of measures strongly depends on the shape of the dynamics of the traded assets. Kl\"oppel and Schweizer \cite{KS} introduce a utility-based approach to restrict the set of equivalent probability measures. In the case of the exponential utility, and in the particular case of the completed  filtration generated by a Levy process, a m-stable set of equivalent martingale measures is constructed. Thus $M_{st}(X)=\esssup_{Q \in {\cal Q}}E_Q(X|{\cal F}_s)$ defines a time-consistent family of sublinear regular operators. However  the definition of ${\cal Q}$ relies on the particular shape of the densities of equivalent probability measures in the filtration of a Levy process, furthermore it is not closely related to the Sharp ratio.  For further discussion on the link between risk measures and no good deal pricing, we refer to \cite{JK} in the static case and to \cite{BN} in the dynamic case.\\
Hereafter we study the bounds on the Sharpe ratio to extract the minorant and majorant operators bounding the prices. We consider first the static one-period setting and then the multi-period one. We then motivate and extend in an appropriate way the definition of good-deal bounds to a continuous time dynamic version.
To this purpose we interpret the bounds on the Sharpe ratio as bounds on the Radon-Nykodim density of the corresponding equivalent martingale measure.
As usual in this paper we work with general price systems and not with specific price dynamics. In doing this we differ from large part of the literature related to good-deal bounds on continuous time market models.
However, to keep the presentation consistent, we work with payoffs in $L_{\infty}({\cal F}_T)$. This can be motivated recalling that if marketed assets are locally bounded adapted processes, then the corresponding stopped processes by a given stopping time are uniformly bounded.

\subsection{Static setting: one-period market}

Following Cochrane and Saa Requejo \cite{CSR2000}, a {\it good-deal of level $\delta>0$} is a non-negative ${\cal F}_T$-measurable payoff $X$ such that
\begin{equation*}
\frac{E(X)-E_Q(X)}{\sqrt {Var(X)}} \geq \delta.
\end{equation*}
Accordingly, a probability measure $Q$ equivalent to $P$ is a {\it no good-deal pricing measure} if there are no good-deals of level $\delta$ under $Q$, i.e., 
\begin{equation}
\label{eqSR}
E_Q[X]\geq E[X] - \delta \sqrt{Var(X)}, \quad X \geq 0.
\end{equation}
Note that \eqref{eqSR} holds for all $X\in L_\infty({\mcal{F}_T})$ as we have that $X+\Vert X \Vert_\infty \geq 0$.
Hence also the relation
\begin{equation}
\label{eqSR-b}
E_Q[X]\leq E[X] + \delta \sqrt{Var(X)}
\end{equation}
holds true for all $X\in L_\infty({\mcal{F}_T})$.
This motivates the following extended general definition of no-good-deal pricing measure.
\begin{definition}
\label{no-good-deal measure}
A probability measure $Q$ equivalent to $P$ is a {\it no good-deal pricing measure} if there are no good-deals of level $\delta>0$ under $Q$, i.e., 
\begin{equation}
\label{eqSR-2}
-\delta \leq \frac{E(X)-E_Q(X)}{\sqrt {Var(X)}} \leq \delta,
\end{equation}
for all $X\in L_2(\mcal{F}_T,P)\cap L_1(\mcal{F}_T,Q)$.
\end{definition}

In this forthcoming application $\mcal{F}_0$ is the trivial $\sigma$-algebra.
At first we study the static setting of a one-period market with trading times $0,T$ and we consider a linear pricing operator $x_{0T}$ defined on the linear subspace $L_T \subseteq L_{\infty}({\cal F}_T)$ representing the marketed assets. 
Hence we assume that 
$$
m(Z)+x_{0T}(X) \leq M(Y),
$$
for every $X \in L_T$ and $Y,Z, \in L^+_{\infty}({\cal F}_T)$: $Z+X \leq Y$.
Here we have considered $m(X):= E(X)- \delta \sqrt {Var(X)}$ and $M(X):=E(X)+ \delta \sqrt {Var(X)}$, $X\in L^+_{\infty}({\cal F}_T)$ for some positive $\delta$.

\begin{proposition}
The functionals $m(X) = E(X)- \delta \sqrt {Var(X)}$ and $M(X)=E(X)+ \delta \sqrt {Var(X)})$, $X\in L^+_\infty(\mcal{F}_T)$, are respectively superlinear and sublinear.
Moreover, the operator $M$ is regular. 
\label{lemmaE1}
\end{proposition}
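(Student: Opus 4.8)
The plan is to verify directly that $M(X) = E(X) + \delta\sqrt{\mathrm{Var}(X)}$ is sublinear and $m(X) = E(X) - \delta\sqrt{\mathrm{Var}(X)}$ is superlinear on $L_\infty^+(\mcal{F}_T)$, and then that $M$ is regular. The key observation is that $X \mapsto \sqrt{\mathrm{Var}(X)} = \|X - E(X)\|_{L_2}$ is a seminorm on $L_2(\mcal{F}_T)$: it is positively homogeneous of degree $1$, and it is subadditive because the map $X \mapsto X - E(X)$ is linear and the $L_2$-norm satisfies the triangle inequality. First I would record these two facts. Positive homogeneity: for $\lambda \geq 0$, $\mathrm{Var}(\lambda X) = \lambda^2 \mathrm{Var}(X)$, so $\sqrt{\mathrm{Var}(\lambda X)} = \lambda \sqrt{\mathrm{Var}(X)}$. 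Subadditivity: writing $\sigma(X) := \sqrt{\mathrm{Var}(X)}$, we have $\sigma(X+Y) = \|(X+Y) - E(X+Y)\|_{L_2} = \|(X - E(X)) + (Y - E(Y))\|_{L_2} \leq \sigma(X) + \sigma(Y)$.

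Given these, sublinearity of $M$ follows: since $E(\cdot)$ is linear, $M(X+Y) = E(X) + E(Y) + \delta\,\sigma(X+Y) \leq E(X) + \delta\sigma(X) + E(Y) + \delta\sigma(Y) = M(X) + M(Y)$, and $M(\lambda X) = \lambda E(X) + \delta\lambda\sigma(X) = \lambda M(X)$ for $\lambda \geq 0$. Superlinearity of $m$ is symmetric, using that $-\sigma$ is superadditive: $m(X+Y) = E(X)+E(Y) - \delta\sigma(X+Y) \geq E(X)+E(Y) - \delta\sigma(X) - \delta\sigma(Y) = m(X) + m(Y)$, and homogeneity as before. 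One should also check, to be consistent with the stated codomain $L_\infty^+(\mcal{F}_T)$ — here $\mcal{F}_0$ is trivial so these are constants — that the values are nonnegative for $X \geq 0$: for $m$ this needs $E(X) \geq \delta\sigma(X)$, which is \emph{not} automatic for arbitrary $\delta$, but it does not affect the superlinearity/sublinearity claims themselves; I would either note that one restricts to $\delta$ small enough, or observe that only the inequalities \eqref{sublinear} and \eqref{superlinear} are actually asserted here (the positivity issue reappears via the hypothesis $m_{0T}(X) > 0$ in Theorem \ref{thmP}).

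For regularity of $M$, I would take a non-increasing sequence $X_n \in L_\infty^+(\mcal{F}_T)$ with $X_n \downarrow 0$ $P$-a.s. Then $0 \leq X_n \leq X_1 \in L_\infty \subseteq L_2$, so by dominated convergence $E(X_n) \to 0$ and $E(X_n^2) \to 0$, hence $\mathrm{Var}(X_n) = E(X_n^2) - E(X_n)^2 \to 0$, so $\sigma(X_n) \to 0$ and therefore $M(X_n) = E(X_n) + \delta\sigma(X_n) \to 0$. Since the $\sigma$-algebra $\mcal{F}_0$ is trivial these limits are deterministic, so the $P$-a.s.\ convergence required in Definition \ref{d:above} holds trivially. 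The only mildly delicate point — and the one I would be most careful about — is the subadditivity of $\sigma$, i.e.\ reducing it to the $L_2$ triangle inequality via the linearity of $X \mapsto X - E(X)$; everything else is a routine application of the linearity of expectation and dominated convergence.
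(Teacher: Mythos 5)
Your proof is correct and follows essentially the same route as the paper: subadditivity of $X\mapsto\sqrt{\mathrm{Var}(X)}$ via the $L_2$ triangle inequality (the paper invokes Cauchy--Schwarz/Minkowski for $\sqrt{E((X+Y)^2)}$, which you make explicit by centering), homogeneity from $\mathrm{Var}(\lambda X)=\lambda^2\mathrm{Var}(X)$, and regularity of $M$ from dominated convergence. Your side remark about the nonnegativity of the values of $m$ is a fair observation but, as you note, does not affect the stated claims.
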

\begin{proof}
The functionals $m$ and $ M$ are clearly homogeneous. 
By application of the Cauchy-Schwarz inequality, we see that, for every $X,Y$, $E(X+Y)^2 \leq   (\sqrt {E(X^2)}+ \sqrt {E(Y^2)})^2$. 
Then superlinearity of $m$ and sublinearity of $M$ follow.
The regularity of $M$ follows from the dominated convergence theorem.
\end{proof}

Being the conditions of Theorem \ref{thm3.1} satisfied, we can conclude that $x_{0T}$ admits an extension into a linear pricing operator defined on the whole $L_{\infty}({\cal F}_T)$ preserving the sandwich condition with price bounds given by the no good-deal restriction.

\subsection{Dynamic setting: multi-period market}

In the remain of this section we discuss extensions of the previous approach to a dynamic setting. 
First we consider a multi-period market model with trading times $0=s_0<s_1<\dots < s_k=T$.

\begin{lemma} 
\label{lm-dyn}
For any $s\leq t$, let
$c_{st}: L_{\infty}({\cal F}_t) \rightarrow L_{\infty}({\cal F}_s)$ be defined by
$c_{st}(X)=$ $ \delta_{st} \sqrt{\big(E[X^2|{\cal F}_s]\big)}$ for some positive $\delta_{st}$.
The operator $c_{st}$ is $\mcal{F}_s$-homogeneous, sublinear, and regular. 
Assume that, for every $r \leq s \leq t$, $\delta_{rt}=\delta_{rs}\delta_{st}$. 
Then $c_{st}$, $s,t\in [0,T]:$ $s<t$, is a time-consistent family of operators.
\end{lemma}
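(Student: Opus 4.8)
I want to verify four properties of the operators $c_{st}(X) = \delta_{st}\sqrt{E[X^2\mid\mcal{F}_s]}$: $\mcal{F}_s$-homogeneity, sublinearity, regularity, and time-consistency (meaning $c_{rs}(c_{st}(X)) = c_{rt}(X)$ for $r\leq s\leq t$). The first three are essentially pointwise (in $\omega$) versions of the corresponding statements already proved for the static functional $M$ in Proposition \ref{lemmaE1}, now carried out conditionally; the fourth is the genuinely new claim and exploits the tower property together with the multiplicativity hypothesis $\delta_{rt} = \delta_{rs}\delta_{st}$.

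\textbf{Homogeneity and sublinearity.} For $\lambda\in L^+_\infty(\mcal{F}_s)$ we have $E[(\lambda X)^2\mid\mcal{F}_s] = \lambda^2 E[X^2\mid\mcal{F}_s]$ by pulling out the $\mcal{F}_s$-measurable factor, and taking square roots (using $\lambda\geq 0$) gives $c_{st}(\lambda X) = \lambda c_{st}(X)$. For subadditivity I would use the conditional Cauchy--Schwarz inequality: $E[XY\mid\mcal{F}_s] \leq \sqrt{E[X^2\mid\mcal{F}_s]}\sqrt{E[Y^2\mid\mcal{F}_s]}$, whence $E[(X+Y)^2\mid\mcal{F}_s] = E[X^2\mid\mcal{F}_s] + 2E[XY\mid\mcal{F}_s] + E[Y^2\mid\mcal{F}_s] \leq \big(\sqrt{E[X^2\mid\mcal{F}_s]} + \sqrt{E[Y^2\mid\mcal{F}_s]}\big)^2$ $P$-a.s.; taking square roots and multiplying by $\delta_{st}\geq 0$ yields $c_{st}(X+Y)\leq c_{st}(X)+c_{st}(Y)$. (Note this works for all $X,Y\in L_\infty(\mcal{F}_t)$, in particular on the positive cone.)

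\textbf{Regularity.} If $X_n\in L^+_\infty(\mcal{F}_t)$ with $X_n\downarrow 0$ $P$-a.s., then $X_n^2\downarrow 0$ $P$-a.s. and $0\leq X_n^2\leq X_1^2\in L_\infty(\mcal{F}_t)$, so conditional dominated convergence gives $E[X_n^2\mid\mcal{F}_s]\downarrow 0$ $P$-a.s., hence $c_{st}(X_n)\to 0$ $P$-a.s. This is exactly Definition \ref{d:above}.

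\textbf{Time-consistency — the main point.} Here I compute, for $r\leq s\leq t$ and $X\in L_\infty(\mcal{F}_t)$,
\[
c_{rs}\big(c_{st}(X)\big) = \delta_{rs}\sqrt{E\big[c_{st}(X)^2\,\big|\,\mcal{F}_r\big]} = \delta_{rs}\sqrt{E\big[\delta_{st}^2\,E[X^2\mid\mcal{F}_s]\,\big|\,\mcal{F}_r\big]} = \delta_{rs}\delta_{st}\sqrt{E\big[E[X^2\mid\mcal{F}_s]\,\big|\,\mcal{F}_r\big]},
\]
where I pulled the deterministic (or at worst $\mcal{F}_r$-measurable — the lemma's $\delta_{st}$ should be read as a constant, or I would add the natural hypothesis that $\delta_{st}$ is $\mcal{F}_r$-measurable for all $r\le s$) factor $\delta_{st}^2$ out of the conditional expectation. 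By the tower property $E[E[X^2\mid\mcal{F}_s]\mid\mcal{F}_r] = E[X^2\mid\mcal{F}_r]$, and by hypothesis $\delta_{rs}\delta_{st} = \delta_{rt}$, so the right-hand side equals $\delta_{rt}\sqrt{E[X^2\mid\mcal{F}_r]} = c_{rt}(X)$. The only subtlety — and the thing worth flagging as the main obstacle — is the status of the coefficients $\delta_{st}$: for the computation to go through cleanly they must be $\mcal{F}_r$-measurable whenever $r\leq s$ (constants being the natural case intended), so that they survive the conditioning; I would state this explicitly. With that observation the identity $c_{rs}\circ c_{st} = c_{rt}$ holds $P$-a.s., which is the time-consistency asserted.
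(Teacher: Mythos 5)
Your proof is correct and follows essentially the same route as the paper's: conditional Cauchy--Schwarz (H\"older) for sublinearity, conditional dominated convergence for regularity, and the tower property combined with $\delta_{rt}=\delta_{rs}\delta_{st}$ for time-consistency. Your remark about the $\delta_{st}$ being constants is consistent with how the paper uses them, so no gap there.
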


\begin{proof}
For any $s\leq t$, the operator $c_{st}$ is trivially $\mcal{F}_s$-homogeneous.
Sublinearity follows directly from the following inequality:
$$
E\big[(X+Y)^2|{\cal F}_s\big] \leq   \Big( \big( E[X^2|{\cal F}_s]\big)^{\frac {1}{2}}+
\big( E[Y^2|{\cal F}_s]\big)^{\frac {1}{2}}\Big)^2,
$$
that follows from the conditional H\"older inequality:
$$
E[XY|{\cal F}_s] \leq \big( E[X^2|{\cal F}_s]\big)^{\frac{1}{2}} \big(E[Y^2|{\cal F}_s]\big)^{\frac{1}{2}}.
$$
Consider $r \leq s \leq t$. Then we have 
$$
c_{rs}(c_{st}(X))
=\delta_{rs} \delta_{st} \Big( {E\big[ \big( E[X^2|{\cal F}_s] \big)^{\frac{1}{2}}}^2|{\cal F}_r\big]\Big)^{\frac{1}{2}}
= c_{rt}(X)
$$ 
This proves the time consistency.
The regularity follows directly from the dominated convergence theorem.
\end{proof}

The following proposition appears as a direct application of the sandwich extension theorem \ref{thm3.1}.

\begin{proposition}
Let ${\cal T}$ be a finite subset of $[0,T]$. Fix ${\cal T}:=\{s_0,s_1,.. s_k\}$. 
For every $0 \leq i \leq k-1$, let 
\[
\begin{split}
M_{s_i, s_{i+1}} &= E(X|{\cal F}_{s_i})+\delta_{s_i,s_{i+1}}\sqrt {(E((X-E(X|{\cal F}_{s_i}))^2|{\cal F}_{s_i})}, \quad X \in L^+_{\infty}({\cal F}_{s_{i+1}}),\\
m_{s_i, s_{i+1}} &=E(X|{\cal F}_{s_i})-\delta_{s_i,s_{i+1}} \sqrt {(E((X-E(X|{\cal F}_{s_i}))^2|{\cal F}_{s_i})}, \quad X \in L^+_{\infty}({\cal F}_{s_{i+1}}).
\end{split}
\]
Define recusively $ M_{s_i,s_j}$ for $j-i>1$ by $M_{s_i,s_j}(X)=M_{s_i,s_{i+1}}(M_{s_{i+1},s_j}(X))$, similarly for $m_{s_i,s_j}$.
Hence $M_{s_i, s_{j}}(X)$, $X \in L^+_{\infty}({\cal F}_{s_{j}})$, $0 \leq i \leq j \leq k$, is a time-consistent  family of regular $\mcal{F}_{s_i}$-homogeneous and sublinear operators with $M_{s_i, s_{j}}(1)=1$; $m_{s_i, s_{j}}(X)$, $X\in L^+_{\infty}({\cal F}_{s_{i+1}})$, $0 \leq i \leq j \leq k$, is a time-consistent family of superlinear operators with $m_{s_i, s_{j}}(1)=1$. \\
Let $x_{s_i,s_{j}}$, $i=0,...,k-1$, be a family of time-consistent price operators defined, for each $j$, on the linear subspace $L_{s_{j}}\subseteq L_{\infty}({\cal F}_{s_{j}})$ with values in  $L_{\infty}({\cal F}_{s_{i}})$ and satisfying the following sandwich inequality:
$$
m_{s_i, s_{i+1}}(Z)+x_{s_i, s_{i+1}}(X) \leq M_{s_i, s_{i+1}}(Y)
$$
for all $X \in L_{s_{i+1}}$ and $Y,Z, \in L^+_{\infty}({\cal F}_{s_{i+1}})$: $Z+X \leq Y$.\\
Then $x_{s_{i},s_{i+1}}$,$i=0,...,k-1$, extends into a time-consistent family of linear price operators preserving the sandwich condition above.
\label{propSR}
\end{proposition}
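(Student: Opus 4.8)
The plan is to reduce Proposition~\ref{propSR} to a finite iteration of the one-period sandwich extension theorem (Theorem~\ref{thm3.1}) and then patch the extensions together while preserving time-consistency, exactly in the spirit of Step~A in the proof of Theorem~\ref{thmP}. First I would check the preliminary structural claims: that $M_{s_i,s_{i+1}}$ and $m_{s_i,s_{i+1}}$ are $\mcal{F}_{s_i}$-homogeneous, sublinear (resp. superlinear), and regular, and that $M_{s_i,s_{i+1}}(1)=m_{s_i,s_{i+1}}(1)=1$. Homogeneity and the sub/superlinearity follow from the conditional Cauchy--Schwarz (equivalently, conditional H\"older) inequality applied to the variance term, just as in Proposition~\ref{lemmaE1} and Lemma~\ref{lm-dyn}; regularity of $M_{s_i,s_{i+1}}$ follows from the conditional dominated convergence theorem, and $M(1)=m(1)=1$ is immediate since the conditional variance of a constant vanishes. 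Note that here I should write the bounds in the form of Lemma~\ref{lm-dyn}, i.e. observe that $M_{s_i,s_{i+1}}(X)=E[X|\mcal{F}_{s_i}]+c_{s_i,s_{i+1}}(X-E[X|\mcal{F}_{s_i}])$, so that the regularity and homogeneity are inherited from $c_{s_i,s_{i+1}}$ and from the conditional-expectation operator. Time-consistency of the composed families $M_{s_i,s_j}$, $m_{s_i,s_j}$ for $j-i>1$ holds by definition, once we know the recursion is well defined, which it is since $M_{s_{i+1},s_j}(X)\in L^+_\infty(\mcal{F}_{s_{i+1}})$.

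Next, for each index $i\in\{0,\dots,k-1\}$ I would apply Theorem~\ref{thm3.1} with $\A=\mcal{F}_{s_i}$, $\B=\mcal{F}_{s_{i+1}}$, $L=L_{s_{i+1}}$, $M=M_{s_i,s_{i+1}}$, $m=m_{s_i,s_{i+1}}$: the given one-step sandwich hypothesis is exactly the hypothesis \eqref{san} of that theorem, so $x_{s_i,s_{i+1}}$ extends to a monotone linear operator $\tilde x_{s_i,s_{i+1}}:L_\infty(\mcal{F}_{s_{i+1}})\to L_\infty(\mcal{F}_{s_i})$, continuous from above, satisfying $m_{s_i,s_{i+1}}(X)\le\tilde x_{s_i,s_{i+1}}(X)\le M_{s_i,s_{i+1}}(X)$ for $X\in L^+_\infty(\mcal{F}_{s_{i+1}})$. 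By Theorem~\ref{thm1}, each $\tilde x_{s_i,s_{i+1}}$ is of the form $E[\,\cdot\, f_{s_i,s_{i+1}}\mid\mcal{F}_{s_i}]$ for some $f_{s_i,s_{i+1}}\in L_1^+(\mcal{F}_{s_{i+1}})$ with $E[f_{s_i,s_{i+1}}\mid\mcal{F}_{s_i}]=1$ (note $\tilde x_{s_i,s_{i+1}}(1)=1$ since $m(1)=M(1)=1$).

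The composed extensions need not be time-consistent, so, as in the proof of Theorem~\ref{thmP}, I would instead set $f:=\prod_{k=1}^{K}f_{s_{k-1}s_k}$ (with $K=k$ here), define $P_0(A):=\int_A f\,dP$, and put $\hat x_{s_i,s_j}(X):=E\big[X\frac{f}{E[f\mid\mcal{F}_{s_i}]}\,\big|\,\mcal{F}_{s_i}\big]=E_{P_0}[X\mid\mcal{F}_{s_i}]$ for $X\in L_\infty(\mcal{F}_{s_j})$. This family is automatically time-consistent by the tower property. One checks as in Theorem~\ref{thmP} that $\hat x_{s_{i-1},s_i}=\tilde x_{s_{i-1},s_i}$ on $L_\infty(\mcal{F}_{s_i})$ (because the extra factors $f_{s_\ell s_{\ell+1}}$ for $\ell\ge i$ integrate to $1$ given $\mcal{F}_{s_i}$, and those for $\ell\le i-2$ are $\mcal{F}_{s_{i-1}}$-measurable and cancel against $E[f\mid\mcal{F}_{s_{i-1}}]$), hence $\hat x_{s_{i-1},s_i}$ agrees with $x_{s_{i-1},s_i}$ on $L_{s_i}$; then by iterating the time-consistency of $\hat x$ and using $x_{s_i,s_j}=x_{s_i,s_{i+1}}\circ\cdots$ one gets $\hat x_{s_i,s_j}=x_{s_i,s_j}$ on $L_{s_j}$ for all $i\le j$. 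Finally, the one-step sandwich bounds $m_{s_i,s_{i+1}}\le\hat x_{s_i,s_{i+1}}\le M_{s_i,s_{i+1}}$, together with the time-consistency of $\hat x$ and the weak time-consistency \eqref{wtcM}--\eqref{wtcm} of $M_{s_i,s_j}$ and $m_{s_i,s_j}$, propagate by induction on $j-i$ to $m_{s_i,s_j}\le\hat x_{s_i,s_j}\le M_{s_i,s_j}$ for all $i\le j$ (this is the same composition argument used in Theorem~\ref{thmP}, monotonicity of $M$ being used to pass $\hat x_{s_{i+1},s_j}\le M_{s_{i+1},s_j}$ through $M_{s_i,s_{i+1}}$). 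Thus the family $\hat x_{s_i,s_j}$ is the desired time-consistent extension preserving the sandwich condition.

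The routine parts are the convexity/regularity checks of $M$ and $m$ (handled by conditional Cauchy--Schwarz and conditional dominated convergence, as already done in Lemma~\ref{lm-dyn} and Proposition~\ref{lemmaE1}). The one point requiring care — and the main obstacle — is the same one encountered in Theorem~\ref{thmP}: the naive concatenation $\tilde x_{s_{k-1}s_k}\circ\cdots\circ\tilde x_{s_0s_1}$ of Hahn--Banach-type extensions is not time-consistent, so one must instead build the single density $f=\prod f_{s_{k-1}s_k}$ and verify that the induced operators $\hat x$ (i) restrict to the original $x_{s_i,s_j}$ on the marketed subspaces and (ii) still satisfy the sandwich bounds at all time pairs, not merely the one-step ones. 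Both verifications go through because $E[f_{s_\ell s_{\ell+1}}\mid\mcal{F}_{s_\ell}]=1$ and because the variance-type majorants/minorants are genuinely (weak) time-consistent, so no new difficulty arises beyond carefully tracking the conditioning.
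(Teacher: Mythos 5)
Your proposal is correct, and at its core it follows the route the paper intends: the paper gives no written proof of Proposition \ref{propSR}, presenting it as a direct application of Theorem \ref{thm3.1}, and your verification of the structural properties of $m_{s_i,s_{i+1}},M_{s_i,s_{i+1}}$ (conditional Cauchy--Schwarz for sub/superlinearity, conditional dominated convergence for regularity, $M(1)=m(1)=1$) followed by a step-by-step application of Theorem \ref{thm3.1} is exactly that. Two remarks on the remainder. First, the detour through Theorem \ref{thm1} and the product density $f=\prod_\ell f_{s_\ell s_{\ell+1}}$ is not needed here, and the ``main obstacle'' you describe is not an obstacle at the level of a finite partition: defining $\hat x_{s_i,s_j}:=\tilde x_{s_i,s_{i+1}}\circ\cdots\circ\tilde x_{s_{j-1},s_j}$ gives a family that is time-consistent by construction, and it in fact coincides with your $E_{P_0}[\,\cdot\,\vert\F_{s_i}]$ since each $\tilde x_{s_\ell,s_{\ell+1}}$ is a kernel $E[\,\cdot\,f_{s_\ell s_{\ell+1}}\vert\F_{s_\ell}]$; the genuine consistency issue in Theorem \ref{thmP} arises only because there every pair $(s,t)$ is extended separately. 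Your construction is nevertheless valid (and has the bonus of producing a representing measure); also note that your further claim $\hat x_{s_i,s_j}=x_{s_i,s_j}$ on $L_{s_j}$ for $j-i>1$ needs the intermediate prices to be marketed, but it is not required by the stated conclusion. Second, the parenthetical justification of the induction step via ``monotonicity of $M$'' does not work: the operators $M_{s_i,s_{i+1}}(X)=E[X\vert\F_{s_i}]+\delta_{s_i,s_{i+1}}\sqrt{E[(X-E[X\vert\F_{s_i}])^2\vert\F_{s_i}]}$ are not monotone (for $X$ Bernoulli$(1/2)$ and $\delta>1$ one has $M(X)=\tfrac12(1+\delta)>1=M(1)$ although $X\le 1$). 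The correct propagation uses monotonicity of the extension instead: $\hat x_{s_i,s_j}(X)=\hat x_{s_i,s_{i+1}}\big(\hat x_{s_{i+1},s_j}(X)\big)\le \hat x_{s_i,s_{i+1}}\big(M_{s_{i+1},s_j}(X)\big)\le M_{s_i,s_{i+1}}\big(M_{s_{i+1},s_j}(X)\big)=M_{s_i,s_j}(X)$, by the induction hypothesis, monotonicity of $\hat x_{s_i,s_{i+1}}$, and the one-step sandwich applied to $M_{s_{i+1},s_j}(X)\in L^+_\infty(\F_{s_{i+1}})$, and similarly for the minorant. With that fix your argument is complete.
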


\vspace{2mm}
Note that in the result above, each operator $x_{s_i,s_{i+1}}$ satisfies the sandwich inequality with majorant and minorant directly connected with the Sharpe ration bounds, as in the static setting.
However, when we consider the operator $x_{s_i,s_j}$ ($i+1<j$), which also satisfies the sandwich inequality, the structure of the majorant and minorant operators is more complicated as it is defined by composition:
$M_{s_i,s_{j}}(X) = M_{s_i,s_{i+1}}(M_{s_{i+1},s_{i+2}}(...(M_{s_{j-1},s_{j}}(X))))$, and similarly for $m_{s_i,s_{j}}(X)$. 
Note, in fact, that the use of the time consistency of the corresponding family $c_{s_i,s_{i+1}}$, $i=0,...,k-1$, as given in Lemma \ref{lm-dyn} does not really help in finding simple expressions for $M_{0T}$ and $m_{0T}$. 
Hence, in this case, it is not easy to compare these values with the Sharpe ratio bounds and 
this theorem cannot be generalized to continuous time. 
This is the motivation for introducing a different approach.

\subsection{Dynamic setting: continuous time market}
First of all, we observe that, due to the Cauchy-Schwarz inequality, Definition \ref{no-good-deal measure} is equivalent to:
\begin{definition}
\label{no-good-deal measure 2}
A probability measure $Q$ equivalent to $P$ is a {\it no good-deal pricing measure} if 
$\frac{dQ}{dP} \in L_2({\cal F}_T)$ satisfies
\begin{equation}
E\Big[\big(\frac{dQ}{dP}-1\big)^2\Big] \leq \delta ^2.
\label{SR2}
\end{equation}
\end{definition}

\begin{lemma} 
\label{lm-measures}
Define the set ${\cal Q}_{st}$ of probability measures on $\mcal{F}_t$ as
$$
{\cal Q}_{st}:= \big\{Q \ll P \,| Q_{|{\cal F}_s}\hspace{-1mm}=\hspace{-1mm} P\: and \; \exists\, g_{st} \in L^2({\cal F}_t): \: \frac{dQ}{dP}=1+g_{st},\; E\big[g_{st}^2|{\cal F}_s\big] \leq \delta_{st}^2\big\}.
$$ 
Assume that the family of non negative real numbers $\delta_{st}$, $s,t\in [0,T]$: $s\leq t$, satisfies the following condition:
\begin{equation}
(\delta_{rs}\delta_{st}+\delta_{rs}+\delta_{st})= \delta_{rt}, \quad \forall r \leq s \leq t.
\label{eqComp}
\end{equation}
Then, 
\begin{enumerate}
\item for every probability measures $Q \in {\cal Q}_{rs}$ and $R \in {\cal Q}_{st}$, the probability measure $S\ll P$ with $\frac{dS}{dP} = \frac{dQ}{dP}\frac{dR}{dP}$ belongs to ${\cal Q}_{rt}$.\\
\item  for all $A \in {\cal F}_s$ and all $Q_1, Q_2 \in {\cal Q}_{st}$, there exists $Q \in {\cal Q}_{st}$ such that $ \frac{dQ}{dP}=\frac{d{Q_1}}{dP}1_A+\frac{d{Q_2}}{dP}1_{A^c}$.
\end{enumerate}
\label{lemmaSR3}
\end{lemma}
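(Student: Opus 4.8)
The plan is to verify the two stated closure properties of the families $\mathcal{Q}_{st}$ by direct computation with the Radon--Nikodym densities, using the multiplicativity of the densities and the conditional-expectation tower property, together with the ``scaling'' relation \eqref{eqComp}.

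For part 1, I take $Q \in \mathcal{Q}_{rs}$ and $R \in \mathcal{Q}_{st}$, so $\frac{dQ}{dP} = 1 + g_{rs}$ with $E[g_{rs}^2|\mathcal{F}_r] \leq \delta_{rs}^2$ and $g_{rs}$ being $\mathcal{F}_s$-measurable, and $\frac{dR}{dP} = 1 + g_{st}$ with $E[g_{st}^2|\mathcal{F}_s] \leq \delta_{st}^2$ and $g_{st}$ being $\mathcal{F}_t$-measurable. Then $\frac{dS}{dP} = (1+g_{rs})(1+g_{st}) = 1 + g_{rt}$ where $g_{rt} := g_{rs} + g_{st} + g_{rs}g_{st}$, which is $\mathcal{F}_t$-measurable. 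First I would check $S_{|\mathcal{F}_r} = P$, i.e. $E[g_{rt}|\mathcal{F}_r]=0$: conditioning on $\mathcal{F}_s$ first and using that $g_{rs}$ is $\mathcal{F}_s$-measurable while $E[g_{st}|\mathcal{F}_s]=0$ gives $E[g_{st}|\mathcal{F}_s] = 0$ and $E[g_{rs}g_{st}|\mathcal{F}_s] = g_{rs}E[g_{st}|\mathcal{F}_s]=0$, so $E[g_{rt}|\mathcal{F}_s] = g_{rs}$ and hence $E[g_{rt}|\mathcal{F}_r] = E[g_{rs}|\mathcal{F}_r]=0$. The crucial estimate is on $E[g_{rt}^2|\mathcal{F}_r]$. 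Writing $g_{rt} = g_{rs} + g_{st}(1+g_{rs})$ and conditioning first on $\mathcal{F}_s$: since $g_{rs}$ is $\mathcal{F}_s$-measurable and $E[g_{st}|\mathcal{F}_s]=0$, the cross term vanishes and $E[g_{rt}^2|\mathcal{F}_s] = g_{rs}^2 + (1+g_{rs})^2 E[g_{st}^2|\mathcal{F}_s] \leq g_{rs}^2 + (1+g_{rs})^2\delta_{st}^2 = (g_{rs}(1+\delta_{st})^{1/2}\cdot\text{(something)})$; more cleanly, $E[g_{rt}^2|\mathcal{F}_s] \leq (|g_{rs}| + |1+g_{rs}|\delta_{st})^2$ is false in general, so instead I keep $g_{rs}^2 + (1+g_{rs})^2\delta_{st}^2$ and take $E[\cdot|\mathcal{F}_r]$, obtaining $E[g_{rt}^2|\mathcal{F}_r] \leq \delta_{rs}^2 + (1 + 2E[g_{rs}|\mathcal{F}_r] + E[g_{rs}^2|\mathcal{F}_r])\delta_{st}^2 = \delta_{rs}^2 + (1+\delta_{rs}^2)\delta_{st}^2 = \delta_{rs}^2 + \delta_{st}^2 + \delta_{rs}^2\delta_{st}^2$. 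I then need this last quantity to be $\leq \delta_{rt}^2$. From \eqref{eqComp}, $\delta_{rt} = \delta_{rs}+\delta_{st}+\delta_{rs}\delta_{st} = (1+\delta_{rs})(1+\delta_{st}) - 1$, so $\delta_{rt}^2 = (1+\delta_{rs})^2(1+\delta_{st})^2 - 2(1+\delta_{rs})(1+\delta_{st}) + 1$; expanding and comparing with $\delta_{rs}^2+\delta_{st}^2+\delta_{rs}^2\delta_{st}^2$ reduces to showing $2\delta_{rs}\delta_{st} + 2\delta_{rs}^2\delta_{st} + 2\delta_{rs}\delta_{st}^2 \geq 0$, which holds since all $\delta$'s are nonnegative. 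This settles part 1.

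For part 2, given $Q_1,Q_2 \in \mathcal{Q}_{st}$ with densities $1+g^{(1)}_{st}$, $1+g^{(2)}_{st}$ and $A \in \mathcal{F}_s$, I set $\frac{dQ}{dP} = 1 + g_{st}$ with $g_{st} := g^{(1)}_{st}1_A + g^{(2)}_{st}1_{A^c}$, which is $\mathcal{F}_t$-measurable and nonnegative-density (so $Q \ll P$). Since $1_A$ is $\mathcal{F}_s$-measurable, $E[g_{st}|\mathcal{F}_s] = 1_A E[g^{(1)}_{st}|\mathcal{F}_s] + 1_{A^c}E[g^{(2)}_{st}|\mathcal{F}_s] = 0$, giving $Q_{|\mathcal{F}_s} = P$; and $E[g_{st}^2|\mathcal{F}_s] = 1_A E[(g^{(1)}_{st})^2|\mathcal{F}_s] + 1_{A^c}E[(g^{(2)}_{st})^2|\mathcal{F}_s] \leq 1_A\delta_{st}^2 + 1_{A^c}\delta_{st}^2 = \delta_{st}^2$. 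Hence $Q \in \mathcal{Q}_{st}$, and by construction $\frac{dQ}{dP} = \frac{dQ_1}{dP}1_A + \frac{dQ_2}{dP}1_{A^c}$.

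The main obstacle is the inequality $\delta_{rs}^2 + \delta_{st}^2 + \delta_{rs}^2\delta_{st}^2 \leq \delta_{rt}^2$ in part 1: one must realize that condition \eqref{eqComp} is exactly designed so that $1+\delta_{st}$ is multiplicative along the time partition (that is, $(1+\delta_{rt}) = (1+\delta_{rs})(1+\delta_{st})$), and that the bound one naturally derives on $E[g_{rt}^2|\mathcal{F}_r]$ is $(1+\delta_{rs})^2(1+\delta_{st})^2$ minus lower-order terms, which is dominated by $(1+\delta_{rt})^2 = 1+2\delta_{rt}+\delta_{rt}^2$; the slack is nonnegative precisely because of the cross terms $2\delta_{rs}\delta_{st}(1+\delta_{rs})(1+\delta_{st})\geq 0$. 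Everything else is a routine application of the tower property and $\mathcal{F}_s$-measurability of the relevant quantities.
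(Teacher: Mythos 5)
Your proof is correct, and it reaches the same conclusion as the paper by a direct density computation, but the way you handle the second conditional moment differs from the paper's argument. The paper expands $\big(g_{rs}g_{st}+g_{rs}+g_{st}\big)^2$ into all six terms, kills two cross terms by the tower property, and controls the remaining cross term $2E\big[g_{rs}g_{st}^2\,\vert\,\mathcal{F}_r\big]$ by the Cauchy--Schwarz inequality, arriving exactly at the bound $\big(\delta_{rs}\delta_{st}+\delta_{rs}+\delta_{st}\big)^2=\delta_{rt}^2$. You instead write $g_{rt}=g_{rs}+(1+g_{rs})g_{st}$, condition on $\mathcal{F}_s$ first so that the cross term vanishes identically by $E[g_{st}\vert\mathcal{F}_s]=0$ (no Cauchy--Schwarz needed), and obtain the sharper bound $\delta_{rs}^2+\delta_{st}^2+\delta_{rs}^2\delta_{st}^2$, which you then compare with $\delta_{rt}^2$ via the multiplicative form $(1+\delta_{rt})=(1+\delta_{rs})(1+\delta_{st})$ of \eqref{eqComp}; the nonnegativity of the leftover cross terms finishes the argument. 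Your route is slightly cleaner and in fact shows that the equality \eqref{eqComp} could be weakened to $\delta_{rs}^2+\delta_{st}^2+\delta_{rs}^2\delta_{st}^2\leq\delta_{rt}^2$ for part 1, and you also verify explicitly that $E[g_{rt}\vert\mathcal{F}_r]=0$, i.e.\ $S_{\vert\mathcal{F}_r}=P$, a membership condition the paper leaves implicit; the paper's version, on the other hand, needs no algebraic comparison at the end since its bound is literally $\delta_{rt}^2$ by \eqref{eqComp}. Two cosmetic remarks: the summary sentence describing the slack as $2\delta_{rs}\delta_{st}(1+\delta_{rs})(1+\delta_{st})$ is not the exact difference you computed in the body (which is $2\delta_{rs}\delta_{st}(1+\delta_{rs}+\delta_{st})$), though both are nonnegative so nothing breaks; and, like the paper, you tacitly manipulate conditional expectations of products such as $(1+g_{rs})^2g_{st}^2$ whose integrability is only established a posteriori --- a localization on $\{\vert g_{rs}\vert\leq n\}$ would make this airtight, but the paper glosses over the same point. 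Part 2 matches the paper, which simply calls it obvious.
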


\begin{proof}
Note that
$$
\frac{dS}{dP}=(1+g_{rs})(1+g_{st})=1+g_{rs}g_{st}+g_{rs}+g_{st}
$$
and also that $E((g_{rs}g_{st}+g_{rs}+g_{st})^2|{\cal F}_r)$ $=$ $E(g_{rs}^2 E(g_{st}^2|{\cal F}_s)|{\cal F}_r)+E(g_{rs}^2|{\cal F}_r)+E(g_{st}^2|{\cal F}_r)+2E(g_{rs}^2g_{st}|{\cal F}_r)+2E(g_{st}^2g_{rs}|{\cal F}_r)+2E(g_{rs}g_{st}|{\cal F}_r)$.
Hence, using the properties of the conditional expectation, the inequalities of the kind
$ E(g_{st}^2|{\cal F}_s) \leq \delta_{st}^2$, and the Cauchy Schwarz inequality, we obtain that
$$
E\Big[\big(\frac{dS}{dP}-1\big)^2|{\cal F}_r\Big] 
\leq \big(\delta_{rs}\delta_{st}+\delta_{rs}+\delta_{st}\big)^2 
\leq \delta_{rt}^2.
$$
Thus $S \in {\cal Q}_{rt}$. 
The last assertion is obvious.
\end{proof}

\begin{remark}
An example of constants $\delta_{st}$, $s\leq t$, satisfying \eqref{eqComp} is given by
$\delta_{st}:=\delta^{t-s}-1$ for some $\delta >1$.
In fact, it is easy to see that the relation:
$$(1+\delta_{rt})=(1+\delta_{rs})(1+\delta_{st})$$
is satisfied.
Note also that, if $\delta$ is the Sharpe ratio bound as in \eqref{SR2}, then $\delta_{0T} = \delta^T - 1 = \delta$.
\end{remark}

In view of the previous result we can give the following definition of a dynamic Sharpe ratio.

\begin{definition}
\label{dSR}
A probability measure $Q$ equivalent to $P$ is a {\it dynamic no-good-deal pricing measure} if 
$\frac{dQ}{dP} \in L_2({\cal F}_T)$ satisfies
\begin{equation}
E\Big[\big(\Big(\frac{dQ}{dP}\Big)_t\Big(\frac{dQ}{dP}\Big)_s^{-1}-1\big)^2\vert \mcal{F}_s\Big] \leq \delta_{st}^2,
\end{equation}
for every $s\leq t$ and constants $\delta_{st}>0$ satisfying \eqref{eqComp}.
we recall that $\big(\frac{dQ}{dP}\big)_t:= E\big[ \frac{dQ}{dP} \vert \mcal{F}_t \big]$.
\end{definition}
It is immediate to see that, if $s=0,t=T$, then the definition above corresponds to the one in the static setting with $\delta_{0,T}=\delta$.

\vspace{2mm}
The next result gives a characterization of operators acting as majorant and minorant of prices.
These are directly connected to the dynamic Sharpe ratio bounds as in Definition \ref{dSR}.

\begin{proposition}
Let 
\[
\begin{split}
m_{st}(X) &:= \essinf_{Q \in {\cal Q}_{st}}E_Q[X|{\cal F}_s], \quad X\in L^+_\infty(\mcal{F}_t),\\
M_{st}(X) &:= \esssup_{Q \in {\cal Q}_{st}} E_Q[X|{\cal F}_s], \quad X\in L^+_\infty(\mcal{F}_t),
\end{split}
\]
where ${\cal Q}_{st}$ is defined as in Lemma \ref{lemmaSR3}.
Assume that the constants $\delta_{st}$, $s,t\in [0,T]: s\leq t$, satisfy the relation \eqref{eqComp} and that $\delta_{st}\rightarrow 0$, $t\downarrow s$. \\
Then $M_{st}(X)$, $X\in L^+_\infty(\mcal{F}_t)$, $s,t\in [0,T]: s\leq t$, is a weakly time-consistent, regular family of sublinear, monotone, $\mcal{F}_s$-homogeneous operators. Moreover \eqref{wrc} holds:
\[
M_{st}(X) \geq \liminf_{n\to \infty} M_{s_nt}(X), \quad X\in L^+_\infty(\mcal{F}_t).
\]
Furthermore $m_{st}(X)$, $X\in L^+_\infty(\mcal{F}_t)$, $s,t\in [0,T]: s\leq t$, is a weakly time-consistent, regular family of superlinear, monotone, $\mcal{F}_s$- homogeneous. Moreover, \eqref{wrc} holds:
\[
m_{st}(X) \leq \limsup_{n\to \infty} m_{s_nt}(X), \quad X\in L^+_\infty(\mcal{F}_t).
\]
\label{propSR3}
\end{proposition}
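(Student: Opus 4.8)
The plan is to verify each of the listed structural properties of $M_{st}$ and $m_{st}$ by reducing everything to the defining inequality $E[g_{st}^2\vert\F_s]\le\delta_{st}^2$ for the densities $1+g_{st}$ of measures in $\mathcal Q_{st}$, together with the composition property of Lemma~\ref{lemmaSR3}. First I would record the elementary properties that require no work: $\F_s$-homogeneity of both $M_{st}$ and $m_{st}$ is immediate from $E_Q[\lambda X\vert\F_s]=\lambda E_Q[X\vert\F_s]$ for $\lambda\in L^+_\infty(\F_s)$ (one must check the competing essential suprema/infima are attained over the same family, which they are since $\mathcal Q_{st}$ does not depend on $X$); monotonicity follows since $X\le Y$ implies $E_Q[X\vert\F_s]\le E_Q[Y\vert\F_s]$ for every $Q$; sublinearity of $M_{st}$ and superlinearity of $m_{st}$ follow from $E_Q[X+Y\vert\F_s]=E_Q[X\vert\F_s]+E_Q[Y\vert\F_s]$ and the subadditivity (resp.\ superadditivity) of $\esssup$ (resp.\ $\essinf$) over a fixed index set. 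Also $M_{st}(1)=m_{st}(1)=1$ since $E_Q[1\vert\F_s]=1$ (here one uses that $Q_{\vert\F_s}=P$, so $\mathcal Q_{st}$ is non-empty — it contains $P$ itself, with $g_{st}=0$).

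Next I would address weak time-consistency, i.e.\ \eqref{wtcM} and \eqref{wtcm}. For the sublinear case I want $M_{rs}(M_{st}(X))\le M_{rt}(X)$. The natural route is: fix $Q\in\mathcal Q_{rt}$ arbitrarily; the point is rather to go the other direction — for each $R\in\mathcal Q_{st}$ and each $Q\in\mathcal Q_{rs}$, the composed measure $S$ with $\frac{dS}{dP}=\frac{dQ}{dP}\frac{dR}{dP}$ lies in $\mathcal Q_{rt}$ by part~1 of Lemma~\ref{lemmaSR3}, and by the tower property $E_S[X\vert\F_r]=E_Q[E_R[X\vert\F_s]\vert\F_r]\le E_Q[M_{st}(X)\vert\F_r]$; taking $\esssup$ over $R$ on the left (using that $\esssup_R E_S[X\vert\F_r]\le\esssup_{S'\in\mathcal Q_{rt}}E_{S'}[X\vert\F_r]=M_{rt}(X)$) and then over $Q$ on the right yields $M_{rs}(M_{st}(X))\le M_{rt}(X)$. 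A small technical point is the interchange of $\esssup$ over $R$ with the outer conditional expectation $E_Q[\cdot\vert\F_r]$; this is handled by a standard lattice/measurable-selection argument, using that the family $\{E_R[X\vert\F_s]:R\in\mathcal Q_{st}\}$ is upward directed (part~2 of Lemma~\ref{lemmaSR3} gives the pasting needed for directedness). The superlinear case for $m_{st}$ is dual, with $\essinf$ in place of $\esssup$ and the inequalities reversed. The remaining continuity-in-$t$ conditions \eqref{eqwtcM2}, \eqref{wtcm2} and the hypothesis \eqref{wrc} I would deduce from $\delta_{st}\to0$ as $t\downarrow s$ (resp.\ from the composition relation \eqref{eqComp}): when $\delta_{st}\to0$ the densities $1+g_{st}$ are forced toward $1$ in $L^2(\F_s)$-conditional norm, so $E_Q[X\vert\F_s]\to X$ uniformly over $\mathcal Q_{st}$ for bounded $X$, giving $M_{st}(X)\to X=M_{ss}(X)$; the one-sided semicontinuity in $s$ in \eqref{wrc} follows similarly, using that $\mathcal Q_{s_n t}\subseteq\mathcal Q_{s t}$-type inclusions hold up to the $\delta$-bounds (this is where \eqref{eqComp} and right-continuity of the filtration enter).

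Finally, regularity: I would show that if $X_n\downarrow0$ $P$-a.s.\ in $L^+_\infty(\F_t)$ then $M_{st}(X_n)\to0$. Since $0\le M_{st}(X_n)=\esssup_{Q\in\mathcal Q_{st}}E_Q[X_n\vert\F_s]$, it suffices to bound $E_Q[X_n\vert\F_s]$ uniformly over $\mathcal Q_{st}$ by something tending to $0$. Writing $\frac{dQ}{dP}=1+g_{st}$, the conditional Cauchy–Schwarz inequality gives $E_Q[X_n\vert\F_s]=E[(1+g_{st})X_n\vert\F_s]\le E[X_n\vert\F_s]+(E[g_{st}^2\vert\F_s])^{1/2}(E[X_n^2\vert\F_s])^{1/2}\le E[X_n\vert\F_s]+\delta_{st}(E[X_n^2\vert\F_s])^{1/2}$, a bound independent of $Q$. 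Both terms on the right tend to $0$ $P$-a.s.\ by conditional dominated convergence (dominated by $E[X_1\vert\F_s]$ and $\delta_{st}\|X_1\|_\infty$ respectively), hence $M_{st}(X_n)\to0$; the same estimate with $E_Q[X_n\vert\F_s]\le E[X_n\vert\F_s]+\delta_{st}(E[X_n^2\vert\F_s])^{1/2}$ (valid since $X_n\ge0$ forces $E_Q[X_n\vert\F_s]\le\esssup\le$ this bound, and $E_Q[X_n\vert\F_s]\ge0$) also handles $m_{st}$, since $0\le m_{st}(X_n)\le M_{st}(X_n)\to0$.

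\textbf{Main obstacle.} The step I expect to be genuinely delicate is the weak time-consistency inequality, specifically justifying the interchange of the essential supremum over the inner family $\mathcal Q_{st}$ with the outer conditional expectation $E_Q[\cdot\vert\F_r]$, and correspondingly showing that composing an $\esssup$-optimizing sequence for the inner problem with one for the outer problem stays inside $\mathcal Q_{rt}$. This is exactly the point where parts~1 and~2 of Lemma~\ref{lemmaSR3} (the m-stability-type pasting and multiplicativity of densities under \eqref{eqComp}) must be invoked carefully, via an upward-directedness argument for the family of conditional expectations; everything else is a routine application of Jensen/Cauchy–Schwarz and dominated convergence.
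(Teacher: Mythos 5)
Most of your plan does coincide with the paper's argument: the algebraic properties and the regularity bound via conditional Cauchy--Schwarz are fine, and your route to \eqref{wtcM}--\eqref{wtcm} (upward directedness of $\{E_R[X|\F_s]:R\in\mcal{Q}_{st}\}$ from the pasting in part~2 of Lemma~\ref{lemmaSR3}, interchange of the essential supremum with $E_Q[\cdot|\F_r]$, then composition of densities via part~1) is exactly the paper's, which invokes Neveu's Proposition VI.1.1 for the interchange. The genuine gap is in the other half of weak time-consistency, conditions \eqref{eqwtcM2} and \eqref{wtcm2}. These require $M_{st}(X)=\lim_{t'\downarrow t}M_{st'}(X)$ for every \emph{fixed} pair $s\le t$ and $X\in L^+_\infty(\F_t)$, i.e.\ right-continuity in the maturity variable $t$, not convergence to the diagonal. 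Your sketch (``$\delta_{st}\to0$ forces the densities toward $1$, so $M_{st}(X)\to X=M_{ss}(X)$'') only addresses the degenerate case where the limit point is $t=s$; for $t>s$ the relevant radius $\delta_{st'}$ tends to $\delta_{st}>0$, so ``densities close to $1$'' is not the mechanism, and nothing in your plan bounds $M_{st'}(X)$ \emph{from above} by a quantity converging to $M_{st}(X)$. Note that the easy inclusion $\mcal{Q}_{st}\subseteq\mcal{Q}_{st'}$ gives only $M_{st}(X)\le\liminf_{t'\downarrow t}M_{st'}(X)$, the wrong direction, and the composition inequality \eqref{wtcM} also points the wrong way, so this is not a routine consequence of what you have established.

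The missing argument is the technical core of the paper's proof: for $t_n\downarrow t$ and $Q\in\mcal{Q}_{st_n}$ with $\frac{dQ}{dP}=1+g_{st_n}$, one projects onto $\F_t$ and rescales, setting $k_{st}:=\frac{\delta_{st}}{\delta_{st_n}}E[g_{st_n}|\F_t]$; by conditional Jensen the measure $Q'$ with density $1+k_{st}$ lies in $\mcal{Q}_{st}$, and since $X$ is $\F_t$-measurable one gets the exact convex-type decomposition $E_Q[X|\F_s]=\frac{\delta_{st_n}}{\delta_{st}}E_{Q'}[X|\F_s]+\bigl(1-\frac{\delta_{st_n}}{\delta_{st}}\bigr)E[X|\F_s]$, hence $M_{st_n}(X)\le\frac{\delta_{st_n}}{\delta_{st}}M_{st}(X)+\bigl(1-\frac{\delta_{st_n}}{\delta_{st}}\bigr)E[X|\F_s]$ (this is \eqref{eqMdel}); combining with $\mcal{Q}_{st}\subseteq\mcal{Q}_{st_n}$ and with $\delta_{st_n}\to\delta_{st}$, which follows from \eqref{eqComp} and $\delta_{tt_n}\to0$, yields the claimed limit. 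Without this (or an equivalent) construction your proof of weak time-consistency is incomplete. A smaller point: your justification of \eqref{wrc} is too vague as stated, since $\mcal{Q}_{s_nt}\subseteq\mcal{Q}_{st}$ alone does not compare operators conditioned on different $\sigma$-algebras; the paper instead uses $M_{st}(X)\ge M_{ss_n}(M_{s_nt}(X))\ge E[M_{s_nt}(X)|\F_s]$ (because $P\in\mcal{Q}_{ss_n}$), then Fatou and the right-continuity of the filtration to identify $\liminf_n M_{s_nt}(X)$ as $\F_s$-measurable. Your inclusion idea can be made to work (via directedness, $E[M_{s_nt}(X)|\F_s]=\esssup_{Q\in\mcal{Q}_{s_nt}}E_Q[X|\F_s]\le M_{st}(X)$), but the measurability/Fatou step still needs to be spelled out.
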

\begin{proof} 
For any $s,t\in [0,T]: s\leq t$, the properties of the operators $M_{st}(X)$, $m_{st}(X)$, $X\in L^+_\infty(\mcal{F}_t)$, are immediate.
For what concerns weak time-consistency, the proof of \eqref{wtcM} and \eqref{wtcm} is a simple adaptation of the proof of Theorem 4.4 of \cite{BN2008}:
$\{E_R(X|{\cal F}_s),\;R \in {\cal Q}_{st}\}$ is a lattice upward directed, from Lemma \ref{lemmaSR3} point {\it 2}. Then from Proposition VI.1.1 of \cite{Nev}, it follows that $\forall Q \in {\cal Q}_{st}$, 
$$E_Q(M_{st}(X)| {\cal F}_r) \leq \esssup_{R \in {\cal Q}_{st}}E_Q(E_R(X|{\cal F}_s)|{\cal F}_r))$$
From Lemma \ref{lemmaSR3} point {\it 1}, it follows that $M_{rs}(M_{st}(X)) \leq M_{rt}(X)$.\\
Hereafter we discuss the proof of \eqref{eqwtcM2}. The arguments can be easily adapted for the proof of \eqref{wtcm2}.\\
Let $t_n \downarrow t$ and 
consider  $Q\in \mcal{Q}_{st_n}$ with $\frac{dQ}{dP}= 1+g_{st_n}$ and consider a measure $Q'<<P$ on $\mcal{F}_t$ given by:
\[
\frac{dQ'}{dP}= 1+ k_{st}:= 1 + \frac{\delta_{st}}{\delta_{st_n}} E\big[g_{st_n}\vert \mcal{F}_t \big].
\]
Then we can see that
\[
E\Big[ \frac{dQ'}{dP} \vert \mcal{F}_s \Big] = 1 + \frac{\delta_{st}}{\delta_{st_n}} E\big[ g_{st_n}\vert \mcal{F}_s\big] = 1.
\]
Hence $Q'_{\vert \mcal{F}_s}=P$.
Moreover, we have 
\[
E\big[k^2_{st}\vert \mcal{F}_s\big] 
\leq \frac{\delta^2_{st}}{\delta^2_{st_n}}  E\big[g^2_{st_n}\vert \mcal{F}_s\big] 
\leq \delta^2_{st}. 
\]
Then we conclude that $Q'\in \mcal{Q}_{st}$.
Now consider $X\in L^+_\infty(\mcal{F}_t)$. We can see that
\[\begin{split}
E_{Q}\big[ X\vert \mcal{F}_s\big] &= E \big[ (1+g_{st_n}) X\vert \mcal{F}_s\big]\\
&= \frac{\delta_{st_n}}{\delta_{st}} E_{Q'}\big[ X\vert \mcal{F}_s\big]
+ \Big(1- \frac{\delta_{st_n}}{\delta_{st}} \Big) E \big[ X\vert \mcal{F}_s\big].
\end{split}
\]
Thus we obtain
\begin{equation}
M_{st_n}(X) \leq  \frac{\delta_{st_n}}{\delta_{st}}  M_{st}(X) + \Big(1- \frac{\delta_{st_n}}{\delta_{st}} \Big) E \big[ X\vert \mcal{F}_s\big].
\label{eqMdel}
\end{equation}
Taking the limit for $n\to\infty$, we can see that $\delta_{st_n}\to \delta_{st}$, as a direct application of \eqref{eqComp} and the assumptions. 
On the other side we have that $M_{st}(X) \leq M_{st_n}(X)$ as $\mcal{Q}_{st}\subseteq \mcal{Q}_{st_n}$. 
Thus it follows  from (\ref{eqMdel}) that $\forall X \in L^+_{\infty}({\cal F}_t),\;\;M_{st}(X)= \lim_{n\to \infty} M_{st_n}(X)$.\\
At last we prove relationships \eqref{wrc}. As before we study the majorant operators only as the arguments for the minorant operators are easily adapted. Let $s<s_n<t$, $s_n \downarrow s$. 
From weak time-consistency we know that for all $X \in L^+_{\infty}({\cal F}_t)$, $M_{st}(X) \geq M_{ss_n}(M_{s_nt}(X))$.
Moreover, for every $Y\geq 0$, we have $M_{ss_n}(Y) \geq E\big[ Y\vert\mcal{F}_s\big]$ as $P\in \mcal{Q}_{ss_n}$.
Thus we have
\[
M_{st}(X) \geq E\big[ M_{s_nt}(X)\vert\mcal{F}_s\big] 
\geq E\big[ \liminf_{n\to \infty} M_{s_nt}(X)\vert\mcal{F}_s\big] 
\]
and $\liminf_{n\to \infty} M_{s_nt}(X)$ is $\mcal{F}_s$-measurable as the filtration is right-continuous.
By this we end the proof.
\end{proof}

Finaly we get the following result.
\begin{theorem}
Let $x_{st}$, ${0 \leq s \leq t \leq T}$, be  a right-continuous time-consistent family of price operators defined on the linear space of marketed financial assets $L_t$. Assume that the family $x_{st}$ satisfies the following sandwich condition:
$$m_{s,t}(Z)+x_{s, t}(X) \leq M_{s, t}(Y)$$
for all $X \in L_{t}$ and for every $Y,Z, \in L^+_{\infty}({\cal F}_{t})$: $Z+X \leq Y$, where $M_{st}$ and $m_{st}$ are defined as in Proposition \ref{propSR3}.
Assume  that $m_{0T}(X)>0$ for every $X>  0$.
Then  $x_{st}$ extends to a time-consistent family of linear price operators $\hat x_{st}$ defined on all $L_{\infty}({\cal F}_t)$ with values in $L_{\infty}({\cal F}_s)$, such that
\begin{equation}
\label{sand-ngd}
m_{st}(X) \leq \hat x_{st}(X) \leq M_{st}(X), \quad X \in L^+_{\infty}({\cal F}_T),
\end{equation}
and admitting representation
$$
\hat x_{st}(X)=E_{P_0}(X|{\cal F}_s),\;\; \quad X \in L_{\infty}({\cal F}_t),
$$
for some dynamic no-good-deal pricing measure $P_0$ equivalent with $P$.
\label{thmSR3}
\end{theorem}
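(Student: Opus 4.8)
The plan is to obtain this theorem as a direct application of the general fundamental theorem, Theorem~\ref{thmP}, with majorant and minorant families given by the operators $M_{st}$, $m_{st}$ attached to the sets $\mcal{Q}_{st}$, and then to upgrade the pricing measure produced by that theorem to a \emph{dynamic} no-good-deal pricing measure by a short truncation argument.

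First I would check that $M_{st}$ and $m_{st}$ satisfy all the hypotheses imposed on the majorant/minorant families in Theorem~\ref{thmP}. This is exactly the content of Proposition~\ref{propSR3}: weak time-consistency of both families, together with regularity, $\mcal{F}_s$-homogeneity and sublinearity of $M_{st}$, superlinearity and $\mcal{F}_s$-homogeneity of $m_{st}$, and the condition \eqref{wrc} for both families. Combined with the standing assumptions of the present theorem --- that $m_{0T}(X)>0$ for $X>0$, that the $x_{st}$ form a time-consistent, right-continuous family of price operators, and that the sandwich inequality $m_{st}(Z)+x_{st}(X)\le M_{st}(Y)$ holds whenever $X\in L_t$, $Y,Z\in L^+_\infty(\mcal{F}_t)$ and $Z+X\le Y$ --- all hypotheses of Theorem~\ref{thmP} are in force. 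Theorem~\ref{thmP} then provides a probability measure $P_0\sim P$ with density $f=\tfrac{dP_0}{dP}\in L^+_1(\mcal{F})$, $E[f|\mcal{F}_0]=1$, such that $m_{st}(X)\le E_{P_0}[X|\mcal{F}_s]\le M_{st}(X)$ for $X\in L^+_\infty(\mcal{F}_t)$ and $x_{st}(X)=E_{P_0}[X|\mcal{F}_s]$ for $X\in L_t$. Defining $\hat x_{st}(X):=E_{P_0}[X|\mcal{F}_s]$ on all of $L_\infty(\mcal{F}_t)$ then gives the claimed extension: each $\hat x_{st}$ is monotone, strictly monotone (since $P_0\sim P$), additive, $\mcal{F}_s$-homogeneous and normalised, it extends $x_{st}$, and the tower property makes the family time-consistent; the representation and the sandwich bound \eqref{sand-ngd} are immediate.

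The genuinely new point is that $P_0$ is a dynamic no-good-deal pricing measure in the sense of Definition~\ref{dSR}. Write $\ell_u:=E[f|\mcal{F}_u]$, so that $\ell_0=1$, $\ell_u>0$ a.s., and $E_{P_0}[X|\mcal{F}_s]=E[\tfrac{\ell_t}{\ell_s}X|\mcal{F}_s]$ for $X\in L^+_\infty(\mcal{F}_t)$; the object to control is $u_{st}:=\tfrac{\ell_t}{\ell_s}-1\ge -1$, which satisfies $E[u_{st}|\mcal{F}_s]=0$. From the definition of $\mcal{Q}_{st}$ and the conditional Cauchy--Schwarz inequality one records the majorant bound
\[
M_{st}(Y)\ \le\ E[Y|\mcal{F}_s]+\delta_{st}\big(E[(Y-E[Y|\mcal{F}_s])^2|\mcal{F}_s]\big)^{1/2},\qquad Y\in L^+_\infty(\mcal{F}_t).
\]
Applying $E_{P_0}[Y|\mcal{F}_s]\le M_{st}(Y)$ to the bounded test functions $Y=Y_n:=\tfrac{\ell_t}{\ell_s}\wedge(n+1)=1+(u_{st}\wedge n)$ and cancelling the common term $1+E[u_{st}\wedge n|\mcal{F}_s]$ yields, after the pointwise inequalities $(u_{st}\wedge n)^2\le u_{st}(u_{st}\wedge n)\le u_{st}^2$, an estimate of the form $\beta_n\le\delta_{st}\sqrt{\beta_n}$ with $\beta_n:=E[u_{st}(u_{st}\wedge n)|\mcal{F}_s]\ge 0$, hence $\beta_n\le\delta_{st}^2$. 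Conditional monotone convergence ($n\to\infty$) gives $E[u_{st}^2|\mcal{F}_s]\le\delta_{st}^2$ for every $s\le t$; taking $s=0$, $t=T$ shows moreover $f\in L_2(\mcal{F}_T)$, and since $(\tfrac{dP_0}{dP})_u=\ell_u$ and the constants $\delta_{st}$ obey \eqref{eqComp} by assumption, this is precisely Definition~\ref{dSR}.

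I expect the third paragraph to be the main obstacle: one must show that the \emph{pointwise} sandwich bound $E_{P_0}[\cdot|\mcal{F}_s]\le M_{st}(\cdot)$ on $L^+_\infty(\mcal{F}_t)$ forces the conditional $L^2$-bound on the density ratio $\tfrac{\ell_t}{\ell_s}$. Two points require care: $M_{st}$ is only sublinear, so one works with its explicit Cauchy--Schwarz upper bound rather than with $M_{st}$ itself; and a priori $\tfrac{\ell_t}{\ell_s}$ is not known to be square-integrable, which is why the truncations $u_{st}\wedge n$ and the self-improving inequality $\beta_n\le\delta_{st}\sqrt{\beta_n}$ are used. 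The rest --- that conditional expectation under an equivalent measure defines a time-consistent, right-continuous pricing system consistent with the given bounds --- is routine.
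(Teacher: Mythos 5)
Your proposal is correct and follows essentially the same route as the paper: Proposition \ref{propSR3} combined with Theorem \ref{thmP} yields the extension, the sandwich bound and the representation $\hat x_{st}(X)=E_{P_0}[X|\mathcal{F}_s]$, and the dynamic no-good-deal property of $P_0$ is then extracted from the majorant $M_{st}$ via conditional Cauchy--Schwarz. The only (minor) difference is in the last verification: where you test the sandwich with the truncations $1+(u_{st}\wedge n)$ of the density ratio and close with the self-improving inequality $\beta_n\leq\delta_{st}\sqrt{\beta_n}$, the paper bounds $E[k_{st}X|\mathcal{F}_s]\leq\delta_{st}\sqrt{E[X^2|\mathcal{F}_s]}$ for arbitrary bounded $X$ and concludes through the conditional $L^2$-duality $\sqrt{E[k_{st}^2|\mathcal{F}_s]}=\sup_X E[k_{st}X|\mathcal{F}_s]/\sqrt{E[X^2|\mathcal{F}_s]}$; your variant makes the a priori square-integrability issue for $\ell_t/\ell_s$ explicit, but the substance is the same.
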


\begin{proof}
By Proposition \ref{propSR3}, we can apply Theorem \ref{thmP} and obtain 
$\hat x_{st}(X)=E_{P_0}(X|{\cal F}_s)$, $X \in L_{\infty}({\cal F}_t)$ and \eqref{sand-ngd} for $X \geq 0$.
Next we show that $P_0$ is satisfying Definition \ref{dSR}.
Consider $X \in L_{\infty}({\cal F}_t)$.
Then, from \eqref{sand-ngd}, we see that
\[
E_{P_0}\big[ X \vert \mcal{F}_s \big] \leq \esssup_{Q\in \mcal{Q}_{st}}  E_{Q}\big[ X \vert \mcal{F}_s \big].
\]
Recall that, for any $Q\in \mcal{Q}_{st}$, we have $\frac{dQ}{dP} = 1+ g_{st}$ and $E[g^2_{st}\vert \mcal{F}_s] \leq \delta^2_{st}$.
Similarly we denote the conditional Radon-Nikodym derivative of $P_0$ by 
$\Big( \frac{dP_0}{dP} \Big)_t  \Big( \frac{dP_0}{dP} \Big)^{-1}_s = 1 + k_{st}  $.
Hence we have
\[
\begin{split}
E\big[ k_{st} X \vert \mcal{F}_s\big] 
&\leq \esssup_{Q\in \mcal{Q}_{st}}  E\big[g_{st} X \vert \mcal{F}_s \big]\\
&\leq \esssup_{Q\in \mcal{Q}_{st}}  \sqrt{ E\big[g_{st}^2 \vert \mcal{F}_s \big]\, E\big[X^2 \vert \mcal{F}_s \big]  }\\
& \leq \delta_{st} \sqrt{  E\big[X^2 \vert \mcal{F}_s \big]  }.
\end{split}
\]
We can then conclude that
\[
\sqrt{  E\big[k_{st}^2 \vert \mcal{F}_s \big]  }
=\sup_{X\in L_\infty(\mcal{F}_t)} \frac{E\big[ k_{st} X \vert \mcal{F}_s\big] }{\sqrt{  E\big[X^2 \vert \mcal{F}_s \big]  }} \leq \delta_{st}.
\]
By this the proof is complete.
\end{proof}

We can therefore give the following definition.
\begin{definition}
A dynamic no-good-deal price system $x_{st}(X)$, $X\in L_\infty(\mcal{F}_t)$, $0\leq s \leq t\leq T$, is a time-consistent family of linear price operators satisfying the dynamic sandwich condition:
$$m_{st}(X) \leq x_{st}(X) \leq M_{st}(X)$$ for every $X \in L^+_{\infty}({\cal F}_T)$,
where $m_{st}$ and $M_{st}$ are defined as in Proposition \ref{propSR3}.
\end{definition}

%%%%%%%%%%%%%%%%%%%%%%%%%%
\section{Appendix: Filters on a topological space and compactness}

In this section we briefly report the basic definitions related to filters in general topological spaces as in \cite{B90}. These notions are used in the proof of Theorem \ref{thmP}.

\begin{defn} \label{filter}
A \emph{filter} $\U$ on the set $D$ is a non empty family of subsets of $D$ satisfying the following properties:
\begin{itemize}
\item
Any $C\subseteq D$ for which there exists $U\in \U$ such that $C \supseteq U$ belongs to $\U$;
\item
For any $U_k \in \U$, $k=1,...,K$ ($K\in \mathbb{N}$), the set $\bigcap_{k=1}^K U_k$ belongs to $\U$;
\item
$\emptyset \notin \U$.
\end{itemize}
\end{defn}

Note that from the two last properties we see that any finite intersection of elements of the filter is non-empty.

{\bf Example.} If $D$ is a topological space, then the family of all neighbourhoods $V(f)$ of a point $f\in D$ is a filter.

\begin{defn}
A non empty family $\B$ of subsets of $D$ is a \emph{filter base on $D$}  if the intersection of a finite number of elements of $\B$ contains an element of $\B$ and  $\emptyset \notin \B$.
\end{defn}

Note that any filter is a  filter base . Moreover, form the definition, it is easy to see that if $\U$ is a filter base on $D$ and
$E\supseteq D$, then $\U$ is a  filter base on $E$.

\vspace{2mm}
Let $D$ be a topological space and consider the subset $C\subseteq D$. Recall that a point $f\in D$ is called adherent to $C$ if for every neighbourhood $V(f)$ of $f$ it is verified that $V(f) \cap C \ne \emptyset$.

\begin{defn}
Let $D$ be a topological space. A point $f\in D$ is \emph{adherent to the  filter base $\B$ on $D$} if for every neighbourhood $V(f)$ and every $U \in \B$ it is verified that $V(f) \cap U \ne \emptyset$.
\end{defn}

\begin{defn}
Let $D$ be a topological space satisfying the Hausdorff separation axiom. We say that $D$ is \emph{compact} if for any filter on $D$ there exists an adherent point.
\end{defn}

%%%%%%%%%%%%%%%%%%
\vspace{3mm}
{\bf Acknoledgments.}
This research was specially carried through during the visit of G. Di Nunno at Ecole Polytechnique with the support of Chair of Financial Risks of the Risk Foundation, Paris, and the visit of J. Bion-Nadal at University of Oslo with the support of CMA - Centre of Mathematics for Applications.

%%%%%%%%%%%
\vspace{2mm}
\bibliographystyle{plain}

\end{document}